\pgfplotsset{compat=1.17}
\crefname{enumi}{}{}
\crefname{equation}{}{}
\newtheorem{theorem}{Theorem}[section]
\newtheorem{proposition}[theorem]{Proposition}
\newtheorem{lemma}[theorem]{Lemma}
\newtheorem{assumption}[theorem]{Assumption}
\theoremstyle{definition}
\newtheorem{definition}[theorem]{Definition}
\newtheorem{remark}[theorem]{Remark}
\numberwithin{equation}{section}
\newcommand{\Mid}{\,\middle\vert \,}
\newcommand{\diff}{\, \mathrm{d}}
\newcommand{\widebar}[1]{\overline{#1}}
\renewcommand{\P}{\mathbb{P}}
\DeclareMathOperator{\E}{\mathbb{E}}
\DeclareMathOperator{\R}{\mathbb{R}}
\DeclareMathOperator{\1}{\mathbbm{1}}
\DeclareMathOperator*{\argmax}{arg\,max}
\def\@tocline#1#2#3#4#5#6#7{\relax
\ifnum #1>\c@tocdepth % then omit
\else
\par \addpenalty\@secpenalty\addvspace{#2}%
\begingroup \hyphenpenalty\@M
\@ifempty{#4}{%
\@tempdima\csname r@tocindent\number#1\endcsname\relax
}{%
\@tempdima#4\relax
}%
\parindent\z@ \leftskip#3\relax \advance\leftskip\@tempdima\relax
\rightskip\@pnumwidth plus4em \parfillskip-\@pnumwidth
#5\leavevmode\hskip-\@tempdima
\ifcase #1
\or\or \hskip 1em \or \hskip 2em \else \hskip 3em \fi%
#6\nobreak\relax
\dotfill\hbox to\@pnumwidth{\@tocpagenum{#7}}\par
\nobreak
\endgroup
\fi}
\title{On the Effectiveness of Classical Regression Methods for Optimal Switching Problems}
\email{\color{blue} martin.andersson@math.uu.se}
\author[Andersson]{Martin Andersson}
\address{Martin Andersson,
Department of Mathematics,
Uppsala University,
S-751 06 Uppsala,
Sweden}
\email{\color{blue} benny.avelin@math.uu.se}
\author[Avelin]{Benny Avelin}
\address{Benny Avelin,
Department of Mathematics,
Uppsala University,
S-751 06 Uppsala,
Sweden}
\email{\color{blue} bakkenolofsson@gmail.com}
\author[Olofsson]{Marcus Olofsson}
\address{Marcus Olofsson,
Department of Mathematics,
Uppsala University,
S-751 06 Uppsala,
Sweden}
\begin{document}

\begin{abstract}
  Simple regression methods provide robust, near-optimal solutions for optimal switching problems, including high-dimensional ones (up to 50). While the theory requires solving intractable PDE systems, the Longstaff-Schwartz algorithm with classical regression methods achieves excellent switching decisions without extensive hyperparameter tuning. Testing linear models (OLS, Ridge, LASSO), tree-based methods (random forests, gradient boosting), $k$-nearest neighbors, and feedforward neural networks on four benchmark problems, we find that several simple methods maintain stable performance across diverse problem characteristics, outperforming the neural networks we tested against. In our comparison, $k$-NN regression performs consistently well, and with minimal hyperparameter tuning. We establish concentration bounds for this regressor and show that PCA enables $k$-NN to scale to high dimensions.
\end{abstract}

\maketitle

\setcounter{tocdepth}{1}
\tableofcontents

\section{Introduction}
Managers of complex production systems face a challenging problem: how to decide when to switch between different operational modes when facing uncertain market conditions and significant switching costs? This question arises across industries, from energy production choosing between different fuel sources to manufacturing plants shifting between different product lines. The mathematical framework of \emph{Optimal Switching} (OS) provides a principled approach to  these decisions, yet a significant gap exists between theoretical solutions and practical implementation. These switching decisions carry substantial economic weight. In energy markets, the difference between optimal and suboptimal switching can represent a considerable loss in annual revenue. A poorly-timed switch can lock a facility into an inefficient mode just as conditions shift, while excessive switching erodes profits through transition costs.

While the mathematical foundations of OS are well-established, the value functions of different operational modes can be represented as solutions to systems of parabolic PDEs with interconnected obstacles, these theoretical solutions remain computationally intractable for realistic problems. These PDE systems are difficult to manage even numerically, and without explicit solutions we cannot construct switching strategies to support actual decision-making. Consider a manager facing just a 10-dimensional state space (comprised of multiple correlated fuel costs, energy prices, weather conditions and forecasts). Traditional finite differences with just 100 grid points in each dimension would require $10^{20}$ points to solve for, exceeding any current computational abilities.

A classical approach to similar problems, particularly in the context of American options, involves simulation and regression methods; see the seminal papers~\cite{LS01} and~\cite{TVR99}. These involve simulating data and then using dynamic programming principles to solve the problem by backward induction, iteratively applying regression methods. These methods shift the computational burden from solving high-dimensional PDEs to a statistical estimation problem: given simulated paths of the underlying stochastic factors, can we instead estimate the value functions? This approach faces its own dimensional challenges but offers more flexibility in addressing them. While this technique has been studied extensively in option pricing, its effectiveness in the more complex OS setting with multiple sequential decisions rather than a single exercise time is underexplored, both theoretically and practically.

In this paper we complement the current literature on solving OS problems by simulation and regression, by comparing different regression methods; from more classical ones such as linear regression, tree-based methods, and $k$-nearest neighbor regression, to more complex feedforward neural networks. We test these methods on problems of varying complexity, including high-dimensional settings.

Given the challenging nature of OS problems, especially in higher dimensions, one might expect neural networks to outperform classical methods. Somewhat surprisingly, we find that classical methods, in particular $k$-nearest neighbor, match or outperform the networks we benchmarked against, while also requiring minimal hyperparameter tuning.

Our contributions in this paper are the following:

\begin{itemize}
  \item \textbf{Theoretical guarantees}: We establish error bounds for $k$-NN regression in the Longstaff-Schwartz algorithm under both diffusion and jump-diffusion dynamics, covering sub-Gaussian and sub-exponential tails (\cref{sec:theoretical}).
  \item \textbf{Comprehensive empirical comparison}: We compare regression models, including linear methods (OLS, Ridge, LASSO), tree-based methods (Random Forests, gradient boosting), $k$-nearest neighbors, and feedforward neural networks, on a suite of management problems from the OS literature, including high-dimensional (up to $d=50$) ones (\cref{sec:numerical}).
  \item \textbf{Simple methods excel}: We find that very simple models, like $k$-NN, can achieve almost optimal management decisions, including in high dimensions, when paired with straightforward dimensionality reduction such as Principal Component Analysis (PCA) (\cref{sec:results}).
\end{itemize}

\section{Literature overview} \label{sec:literature}
The mathematical theory of OS is well studied and contributions have been made by a long list of researchers. Without any aspiration to give a complete list, contributions can be found in, e.g.,~\cite{barkhudaryan2022system,biswas2010viscosity,djehiche2009finite,el2017viscosity,el2009finite,HM13,hu2010multi,K16,lundstrom2014systems,lundstrom2014systems2,lundstrom2022systems,martyr2016dynamic,perninge2018limited,perninge2020finite,perninge2021finite}.
On a more fundamental level, the concepts of backward stochastic differential equations and viscosity solutions for PDEs are of critical importance for the development of OS theory, and we refer to~\cite{CIL92,PP90,PP92} for introductions to these concepts.  As the standard setting has been understood, many alternative formulations of the OS have been considered, e.g., game settings~\cite{HMM19}, delayed decisions and memory~\cite{perninge2018limited,perninge2020finite}, partial information~\cite{LNO15,O18}, state constraints~\cite{K16}, and numerical methods based on simulations~\cite{ACLP12,LOO21}.

When it comes to applications, the literature is far less developed. This contrasts with what could be expected given the importance of closely related topics in financial mathematics; see, e.g.,~\cite{EPQ97}, and~\cite{PS06} for an overview. Typically, the applications considered are low-dimensional and rather unrealistic from a practitioner point of view; see, e.g.,~\cite{BO94,BS85}. A literature survey indicates that, besides~\cite{ACLP12,carmona2008pricing,CL10,LOO21}, concrete applications of optimal switching in at least semi-realistic settings are hard to come by.

A major challenge in the quest for explicit value functions of OS problems is the quickly increasing dimensionality of the problem when applied to more realistic settings. This dimensionality becomes an issue as most numerical methods suffer heavily from the ``curse of dimensionality''. Starting with~\cite{T93}, a classical approach to overcoming this difficulty in option pricing is to ignore the connection to PDEs and instead exploit the stochastic formulation of the optimization problem as an expected value of some functional of a stochastic process; the value function can then be found using Monte Carlo methods. Several attempts have been made to improve this idea using regression, some of the most well-known attempts being~\cite{BG97, C96, HK04, LS01, R02, TVR99}. There have been a few efforts to apply more general regression methods to optimal switching problems; see~\cite{carmona2008pricing,ACLP12,bayraktar2023neural,ludkovski2020simulation}, but the literature is scarce.

The very related topic of optimal stopping has received much more attention, with at least hundreds of variants of Monte Carlo based regression methods, see surveys of \cite{kohler2010review,nadarajah2017comparison}. Attempts were also made already a decade ago to study the optimal stopping problem using neural networks (NN) with a single hidden layer, see~\cite{K10}. Recently, much attention has been directed to this problem from researchers in various fields, including mathematics, finance, data science, and physics. Examples of recent work using deep learning in particular, include~\cite{BCJ19, BCJ21, BH21, EM21, FD21, H20, HKR21,reppen2025neural}.

More generally, deep learning has been applied to stochastic control~\cite{han2016deep} and to solving high dimensional PDEs~\cite{HJW18,sirignano2018dgm}. In the approach of \cite{HJW18}, the equations are solved by reformulating them as backward stochastic differential equations. The work of \cite{hure2020deep} extends the class of equations to ones involving variational inequalities used in optimal stopping, and introduces learning approaches using backwards dynamic programming. The work of \cite{gnoatto2022deep} extends \cite{HJW18} to systems with jumps. These ideas were used in \cite{bayraktar2023neural} to solve OS problems using neural networks, applying the RDBP-algorithm of \cite{hure2020deep} to jump diffusion equations in the OS context. 

Some earlier work on regression-based Monte Carlo methods for optimal switching has focused on a particular regression method, such as global or local polynomial bases in \cite{carmona2008pricing} and \cite{ACLP12} respectively, and only in relatively low dimension ($d=3$ and $d = 6$). In \cite{bayraktar2023neural}, a deep learning approach was investigated in high dimension (up to $d=70$), but comparison with simpler regression models was limited to polynomial bases in specific examples. The work in \cite{ludkovski2020simulation} considered a setup where controls affect part of the state trajectory (unlike our setting) and compared several non-parametric regression approaches, although not neural networks, and only in low dimension. Our work bridges this gap, systematically comparing regression models of varying complexity, including neural networks, in moderate-to-high dimension (up to $d = 50$), while remaining in the classical regression-based Monte Carlo framework of \cite{carmona2008pricing} rather than the particular RDBP-algorithm of \cite{hure2020deep,bayraktar2023neural}, which also involves learning the gradient as well as jump dynamics in the latter case.

Several classical machine learning approaches also offer potential advantages for high-dimensional OS problems. Tree-based methods like random forests~\cite{breiman2001random} and gradient boosting~\cite{friedman2001greedy,ke2017lightgbm} perform implicit feature selection, while regularized linear methods like LASSO~\cite{tibshirani1996regression} and Ridge regression~\cite{hoerl1970ridge} stabilize estimation in high dimensions. Even simple non-parametric methods like $k$-nearest neighbors~\cite{cover1967nearest}, when combined with appropriate dimensionality reduction techniques, can capture local structure in the state space without imposing restrictive functional form assumptions. These methods offer varying trade-offs between interpretability, computational efficiency, and flexibility in high dimensions that may prove advantageous for OS problems compared to more complex deep learning approaches.

% The theoretical advantages of these classical methods align with empirical findings in the machine learning literature, which documents numerous instances where they outperform neural network approaches. For example, a comprehensive study of real-world classification tasks~\cite{fernandez2014we} demonstrated that random-forests were most likely to achieve top performance against various classifiers, including NNs. In time series forecasting,~\cite{elsayed2021we} showed that with appropriate pre-processing, simpler gradient-boosted trees~\cite{friedman2001greedy} can outperform neural networks. Further,~\cite{sheridan2016extreme} showed for high-dimensional ($> 1000$) regression tasks in chemical modeling, gradient boosting achieve almost on-par performance with deep neural nets, with a fraction of the computational burden.  Similarly, a systematic analysis~\cite{ferrari2019we} of algorithm proposals for top-n recommendation tasks, showed nearest-neighbor based systems surpassed more complex neural network systems. Notably, in the specific context of OS problems, no comprehensive comparison between neural network approaches and more classical machine learning methods has been conducted to our knowledge—a gap our present work aims to address.

Empirical findings across machine learning suggest that simpler methods often match or exceed neural network performance: random forests in classification~\cite{fernandez2014we}, gradient boosting in time series forecasting~\cite{elsayed2021we} and high-dimensional regression~\cite{sheridan2016extreme}, and nearest-neighbor methods in recommendation systems~\cite{ferrari2019we}. Whether these findings extend to optimal switching has not been explored.

\section{Setup of the problem} \label{sec:setup}
\subsection{The general challenge of optimal switching}\label{sec:challenge}
Optimal switching problems with realistic multidimensional state processes resist analytical solution. When the underlying process $X_t$ involves multiple correlated market variables, the curse of dimensionality makes standard finite difference PDE methods computationally infeasible.

A more viable solution is to use Monte Carlo regression methods like Longstaff-Schwartz~\cite{LS01}, but these introduce three major challenges in this setting. First, each regression step produces distorted training targets. Unlike standard supervised learning where true training labels are observed, optimal switching requires estimating value functions that depend on future value functions that are themselves approximated. This nested structure propagates and potentially amplifies approximation errors through the backward induction.

Second, the method requires solving up to $N \times D$, where $N$ is number of time-steps in discretization of the problem and $D$ the number of production modes, separate regression problems. With typical problems involving hundreds of time steps and multiple modes, computational scaling becomes a dominant constraint on method selection.

Third, each individual regression model must be able to handle high-dimensional feature spaces. Many applications can involve 10--50 market variables (electricity prices across regions, multiple fuel costs, weather patterns, demand forecasts), creating feature spaces where many methods suffer from the curse of dimensionality or require prohibitively large training datasets.

\begin{remark}[On the number of regressions]
  Some regression models can output vectors, and this would reduce the number of regression problems to $N$. However, this reduction might also affect performance. Let us also note that as long as decisions do not need to be made too frequently, perhaps just daily, regressing $N \times D$ times does not necessarily become prohibitive. Further, if one needs a finer discretization for simulation purposes, we can adapt the learning algorithm to only allow for decisions on a subset of $\{1,\dots,N\}$. 
\end{remark}

\subsection{Production facility setup}
The general OS problem we consider in this paper involves a production facility operating in $D$ modes under random market conditions $X_t \in \mathbb{R}^d$. Let $X_t$ be a continuous-time Markov process representing market conditions (electricity prices, fuel costs, weather). Each production mode $i = 1,\ldots,D$ generates instantaneous payoff $f_i(t,x)$ at time $t$ in state $x$, with switching costs $c_{ij}(t,x)$ between modes.

A production strategy is a random function $\mu(t)$ indicating the active production mode at time $t$, adapted to the process $X_t$. This corresponds to a sequence of (stopping) switching times $\{\tau_k\}$ and modes $\{\xi_k\}$ (post-switch modes):
\begin{equation}
  \mu(t) = \xi_k \quad \text{for} \quad \tau_k \leq t < \tau_{k+1}.
\end{equation}

Further, to prevent multiple instantaneous switches being part of an optimal strategy, these assumptions on the cost functions $c_{ij}$ are standard:
\begin{assumption}\label{ass:costs}
  For all pairwise distinct modes $i, j, k$ and all $(t,x) \in [0,T] \times \mathbb{R}^d$:
  \begin{enumerate}
    \item $c_{ii}(t,x) = 0$ (no cost for staying in the same mode),
    \item $c_{ij}(t,x) > 0$ for $i \neq j$ (positive switching costs),
    \item $c_{ij}(t,x) + c_{jk}(t,x) \geq c_{ik}(t,x)$ (triangle inequality).
  \end{enumerate}
\end{assumption}
\subsection{The value function}
The total expected payoff for a given strategy $\mu$ starting in mode $i$ at time $t$ and state $x$ is:
\begin{equation}
  J_i(t,x,\mu) = \E \left[ \int_t^T f_{\mu(s)}(s,X_s) \diff s - \sum_{T \geq \tau_k > t} c_{\xi_{k-1},\xi_k}(\tau_k,X_{\tau_k}) \Mid X_t = x\right].
\end{equation}

The value function is the supremum over all admissible strategies:
\begin{equation}\label{eq:OSPvalue}
  V_i(t,x) = \sup_{\mu} J_i(t,x,\mu).
\end{equation}

It is a classical result of OS that, under rather weak conditions (see for instance~\cite{tang1993finite}), this value function solves the following system of PDEs
\begin{equation} \label{eq:formOS}
  \begin{split}
     & \min \{-\partial_t V_i - \mathcal{L} V_i - f_i, V_i - \max_{j \neq i} \{V_j - c_{ij}\}\} = 0, \\
     & V_i(T,x) = 0,
  \end{split}
\end{equation}
where $\mathcal{L}$ is the infinitesimal generator of the process $X_t$.

Once one has access to the value functions $V_i(t,x)$, the optimal strategy is characterized as follows: when in mode $i$, switch to mode $\hat{\jmath}$ as soon as the process $V_i(t, X_t)$ hits its obstacle $\max_{j=1,\dots,D;j\neq i}\{V_j - c_{ij}\}$, i.e., at the first time $\tau$ such that
\begin{equation}\label{eq:optimal}
  V_i(\tau, X_{\tau}) \leq \max_{j=1,\dots,D;j\neq i} \{V_j(\tau,X_{\tau})- c_{ij}(\tau,X_{\tau})\},
\end{equation}
and $\hat{\jmath} \in \arg \max_{j=1,\dots,D;j\neq i} \{V_j(\tau,X_\tau)- c_{ij}(\tau,X_\tau)\}$.
\subsection{Numerical solution approach} \label{sec:setup:numerical}
In this section we detail the Longstaff-Schwartz approach we use in this paper.

\subsubsection{Process and discretization}
The underlying market process follows jump-diffusion dynamics, where jumps capture sudden market disruptions from weather events, plant outages, or demand spikes:
\begin{equation} \label{eq:process}
  dX_t = b(t,X_t) \diff t + \sigma(t,X_t) \diff W_t + \gamma(t,X_t) \diff J_t,
\end{equation}
where $b(t,x)$ is a drift term, $\sigma(t,x)$ is a diffusion term, $\gamma$ are the jump sizes, $J_t$ is a jump process and $W_t$ is $d$-dimensional Brownian motion. In our experiments $\gamma$ will be a diagonal matrix and each component of $J_t$ will have the form
\begin{equation} \label{eq:jump}
  J_t^i = \sum_{j=1}^{\infty} Y_j^i\1_{\{T_j^i \leq t\}},
\end{equation}
where $Y_j^i$ are the jump sizes and $T_j^i$ is the time of the $j$-th jump of the $i$-th component of the process. The jump process $J_t$ is assumed to be independent of the Brownian motion and the jump times are assumed to be independent of each other.

We discretize the time $[0,T]$ under consideration into $\mathcal T= \{0= t_0, t_1, \dots, t_N = T\}$ with $\Delta t = T/N$ and only allow mode switching at these times, reflecting operational constraints in real production facilities. The continuous parts of the paths $X_{t_n}$, $n=0,\dots,N$, are simulated according to the standard Euler-Maruyama scheme, and at each time step, we simulate the number of jumps and add their cumulative impact to the diffusion update.

Now let ${\mu}$ be a (discrete) management strategy with switching times $\tau_0,\tau_1,\dots,\tau_{N-1}$. The (discretized) expected profit using $\mu$ from $t=t_n$ to $T=t_N$ is then (denoting it again with $J_i$)
\begin{equation*}
  {J}_i(t_n,x,{\mu}) = \E \left[ \sum_{k=n}^{N-1}\Delta t f_{{\mu}_k}(t_k,X_{t_k}) - \sum_{\tau_k\geq t} c_{\xi_{k-1},\xi_{k}}(\tau_k,X_{\tau_k}) \Mid X_{t_n} = x,\xi_{0} = i \right],
\end{equation*}
and the (discretized) value function is defined by (denoting it again with $V_i$)
\begin{equation}\label{value:rec}
  V_i(t_n,x) =  \max_{\widetilde{\mu}}{J}_i(t_n,x,{\widetilde \mu}).
\end{equation}

\subsubsection{The Regression Problem}
The value function satisfies a dynamic programming relation:
\begin{equation} \label{eq:dpp}
  {V}_i(t_n,x) = \max_{j=1,\dots,D} \left[ \Delta t \, f_j(t_n,x)
    - c_{ij}(t_n,x) + \E\left[{V}_j(t_{n+1},X_{t_{n+1}})
    \,\middle\vert\, X_{t_n} = x\right] \right].
\end{equation}
The Longstaff-Schwartz algorithm estimates each continuation value $\E[V_j(t_{n+1},X_{t_{n+1}}) \mid X_{t_n}=x]$ separately via regression, but faces the fundamental challenge that future value functions $V_j(t_{n+1}, \cdot)$ are unknown. We can only use approximations $\widehat{V}_j(t_{n+1}, \cdot)$ from previous time steps, which can include both systematic bias from previous approximations and Monte Carlo noise.

To set up our approach, we first let $\mathcal{X} = \{X^{s}_{t_0},\ldots,X^{s}_{t_N}; s=1,\ldots,M\}$ be the set of all states at all times for $M \in \mathbb{N}$ independent trajectories defined in the previous subsection. The processes' starting points are drawn from a distribution $\mu_0$ which is assumed to be sub-Gaussian, see \cref{def:sub-gaussian}.

The per-mode continuation values we estimate at each time-step are:
\begin{equation} \label{eq:gj-intro}
  g_j(t_n,x) = \E\left[\widehat{V}_j(t_{n+1},\widetilde{X}_{t_{n+1}})
    \,\middle\vert\, \widetilde{X}_{t_n} = x\right].
\end{equation}
Here $\widehat{V}_j(t_{n+1},\cdot)$ are approximations (regression estimates) from previous time-steps (defined below in \cref{eq:Vhat:reconstruction}), and $\widetilde{X}_{t_{n+1}}$ is an independent copy of the discretized process starting from state $x$ at time $t_n$. Since we do not have direct access to $g_j$, we train regression models $\mathcal{R}_{n,j}$ that map state $x$ to an estimate of $g_j(t_n,x)$. For convenience, we write $\mathcal{R}_j(t_n,x) = \mathcal{R}_{n,j}(x)$ and denote the collection of all trained models by $\mathcal{R}$. Then
\begin{equation}\label{eq:Vhat:reconstruction}
  \widehat{V}_i(t_n,x) = \max_{j=1,\dots,D}\left[\Delta t\,f_j(t_n,x)
    - c_{ij}(t_n,x) + \mathcal{R}_j(t_n,x)\right].
\end{equation}

Recalling that $X_{t_n}^s$ is the $s$-th trajectory at time $t_n$, for each $j=1,\dots,D$, the estimate $\mathcal{R}_j(t_n,x)$ of $g_j(t_n,x)$ is obtained by regression over $M$ sample pairs:
\begin{equation}\label{eq:regressionPairs}
  (X_{t_n}^s, Y_{j,t_n}^s), \quad Y_{j,t_n}^s = \frac{1}{m_Y} \sum_{\hat{s}=1}^{m_Y}
    \widehat{V}_j(t_{n+1}, \widetilde{X}^{s,\hat{s}}_{t_{n+1}}),
\end{equation}
where $\widetilde{X}^{s,\hat s}_{t_{n+1}}$, $\hat{s} = 1,\dots,m_Y$ are independent one-step transitions starting in $X_{t_n}^s$. Here $m_Y$ trades computational cost against Monte Carlo variance, typically $m_Y=1$ in our simulations, but larger values are implied in our concentration bounds in \cref{sec:theoretical}.
\subsection{Main algorithm and computational complexity}\label{sec:algorithm}
Computational scaling creates a fundamental trade-off between approximation quality and practical feasibility. The algorithm requires training $N \times D$ separate regression models, one for each time step and operating mode. With hundreds of time steps and multiple modes, this massive scaling means that a method's computational efficiency must be taken into account along with theoretical approximation properties in determining overall performance. A simple method that trains quickly across hundreds of models can be preferable to a more sophisticated method that provides better individual approximations but becomes computationally prohibitive when scaled.

Now follows the complete training algorithm:
\begin{algorithm}[H]
  \caption{Value Function Approximation via Regression}
  \label{alg:main}
  \begin{algorithmic}[1]
    \State Generate $M$ simulated state trajectories $\{X_{t_0}^s, \ldots, X_{t_N}^s\}_{s=1}^M$
    \State Set terminal condition: $\widehat{V}_{j}(t_N,x) \gets 0$ for all $j=1,\ldots,D$
    \For{$n = N-1, N-2, \ldots, 0$} \Comment{Backward induction}
    \State Extract states $\{X_{t_n}^1, \ldots, X_{t_n}^M\}$ at time $t_n$
    \State For each $X_{t_n}^s$, simulate $m_Y$ one-step transitions $\{\widetilde{X}_{t_{n+1}}^{s,\hat{s}}\}_{\hat{s}=1}^{m_Y}$
    \For{$j = 1, \ldots, D$} \Comment{For each target mode}
    \State Compute targets: $Y_{j,t_n}^s = \frac{1}{m_Y}\sum_{\hat{s}=1}^{m_Y} \widehat{V}_j(t_{n+1},\widetilde{X}_{t_{n+1}}^{s,\hat{s}})$
    \State Train regression model $\mathcal{R}_{n,j}$ using $\{(X_{t_n}^s, Y_{j,t_n}^s)\}_{s=1}^M$
    \EndFor
    \For{$j = 1, \ldots, D$} \Comment{Reconstruct value functions}
    \State $\widehat{V}_j(t_n,x) \gets \max_{k=1,\dots,D}[\Delta t\, f_k(t_n,x) - c_{jk}(t_n,x) + \mathcal{R}_{k}(t_n,x)]$
    \EndFor
    \EndFor
  \end{algorithmic}
\end{algorithm}
The total computational cost is $O(N \times D \times (C_{\text{train}}(M,d) + M \times m_Y \times C_{\text{pred}}(d)))$, where $C_{\text{train}}$ and $C_{\text{pred}}$ are the training and prediction complexities for each model. Training complexity typically dominates, but the $M \times m_Y$ prediction calls during backward induction can create bottlenecks for expensive inference methods like $k$-nearest neighbors. The specific complexities for each method appear in \cref{tab:complexity_comparison}.

\subsection{Metrics and comparisons}\label{sec:metrics}
Since the true value function $V_i(t,x)$ is unknown, we evaluate our models using upper and lower bounds constructed from benchmark strategies.

\textbf{Upper bound: A posteriori strategy.} The \emph{a posteriori strategy} $\mu^{\text{ap}}$ uses future knowledge to make optimal decisions, clearly inadmissible since it is not adapted to the process, but provides an upper bound on achievable performance. For a realized trajectory $\{X_{t_0},X_{t_1},\ldots,X_{t_N}\}$, we define by backwards induction:
\begin{equation*}
  \mathcal{V}^{\textnormal{ap}}_{i}(t_n) = \max_{j \in \{1, \ldots, D\}} \left\{ f_j(t_n,X_{t_n}) \Delta t - c_{ij}(t_n,X_{t_n}) + \mathcal{V}^{\textnormal{ap}}_{j}(t_{n+1}) \right\},
\end{equation*}
where $\mathcal{V}^{\textnormal{ap}}_{i}(t_N) = 0$ for all $i$. The corresponding strategy $\mu^{\textnormal{ap}}(t_n)$ follows from \cref{eq:optimal}, yielding the a posteriori value function:
\begin{equation}\label{eq:ap:val}
  V^{\text{ap}}_i(t_n, x) = J_i(t_n,x,\mu^{\text{ap}}(t_n)).
\end{equation}
By Jensen's inequality $V_i(t_n,x) \leq V_{i}^{\text{ap}}(t_n,x)$, no admissible strategy can beat complete knowledge of the future.

\textbf{Lower bound: Myopic strategy.} The greedy strategy ignores future consequences, maximizing only immediate payoffs. For $n < N$:
\begin{equation}\label{eq:greedy:strategy}
  \mu^{\text{gr}}(t_n) \in \argmax_{j \in \{1, \ldots, D\}}
  \left\{ f_j(t_n,X_{t_n}) \Delta t - c_{\mu^{\text{gr}}(t_{n-1}), j}(t_n,X_{t_n}) \right\}.
\end{equation}
The greedy value function is:
\begin{equation}\label{eq:greedy:val}
  V^{\text{gr}}_i(t_n,x) = J_i(t_n,x,\mu^{\text{gr}}(t_n)).
\end{equation}
Since this strategy is adapted by construction, $V_i(t,x) \geq V_{i}^{\text{gr}}(t,x)$. We estimate both \cref{eq:ap:val,eq:greedy:val} by their empirical means.

\textbf{Model benchmarking.} Any reasonable regression model $\mathcal{R}$ should satisfy:
\begin{equation}
  V_i^{\text{gr}}(t_n,x) \leq \widehat{V}_i(t_n,x) \leq V_i^{\text{ap}}(t_n,x).
\end{equation}
Achieving the lower bound verifies non-trivial learning; approaching the upper bound indicates near-optimal performance given available information.

\textbf{Strategy implementation.} We implement strategies $\mu^\mathcal{R}$ derived from our regression models using \cref{eq:optimal}. Let $S_{j} = \widehat{V}_{j}(t_{n},X_{t_n}) - c_{\mu^\mathcal{R}(t_{n-1})j}(t_n,X_{t_n})$, where $\widehat{V}_j$ is reconstructed from the trained continuation values via \cref{eq:Vhat:reconstruction}. Then:
\begin{align}
  \mu^{\mathcal{R}}(t_n) =
  \begin{cases}
    \arg\max_{j \neq \mu^{\mathcal{R}}(t_{n-1})} S_j & \text{if } \max_{j \neq \mu^{\mathcal{R}}(t_{n-1})} S_j \geq S_{\mu^{\mathcal{R}}(t_{n-1})} \\
    \mu^{\mathcal{R}}(t_{n-1})                       & \text{otherwise}
  \end{cases}
\end{align}

\textbf{Performance metrics.} We evaluate models on three dimensions:

\emph{1. Decision quality:} How often does our strategy match a posteriori-optimal switches?
\begin{equation} \label{eq:decision_similarity}
  Q_i^\mathcal{R} \coloneq \E \left[  \frac{\sum_{\tau\in \mathcal{T}} \1_{\mu^{\mathcal{R}}(\tau) = \mu^{\text{ap}}(\tau)} }{N+1} \Mid \xi_0 = i\right].
\end{equation}

\emph{2. Value capture:} What fraction of achievable value do we realize? We normalize the realized value:
\begin{equation} \label{eq:normalized_value}
  \kappa_i^\mathcal{R}(t_n,x) \coloneq \frac{V^{\mathcal{R}}_i(t_n,x)}{V^{\text{ap}}_i(t_n,x)},
\end{equation}
where $V^{\mathcal{R}}_i(t_n,x) = J_i(t_n,x,\mu^{\mathcal{R}}(t_n))$ is the value achieved by following strategy $\mu^{\mathcal{R}}$.

\emph{3. Internal consistency:} Does our value function accurately predict its own strategy's performance?
\begin{equation} \label{eq:prediction_accuracy}
  C_i^\mathcal{R}(t_n,x) \coloneq \frac{1}{1+|V^{\mathcal{R}}_{i}(t_n,x)- \widehat{V}_i(t_n,x)|/|V^{\text{ap}}_i(t_n,x)|}.
\end{equation}
While decision quality and value capture measure actual performance, this measures another useful aspect: model calibration versus discrimination. A model can make good relative decisions (high decision quality) while having systematically biased value estimates. For instance, if a model overestimates all values by 50\%, it may still rank decisions correctly and achieve good value capture, but managers cannot trust the absolute value predictions for budgeting, risk assessment, or comparing with alternative investments.

\section{Regression models for optimal switching}\label{sec:models}
The three challenges established in \cref{sec:challenge}, distorted training targets, $N\times D$ computational scaling, and high-dimensional feature spaces, motivate our selection of regression methods. We evaluate several regression approaches: linear models as natural baselines, local regression methods ($k$-NN), tree-based methods (Random Forests, LGBM), and neural networks, each offering different tradeoffs in robustness, scaling and handling of high-dimensional feature spaces.

\textbf{Computational scaling}
Training $N\times D$ models makes per-model training cost critical. \Cref{tab:complexity_comparison} shows the scaling properties that determine practical feasibility. Methods like polynomial OLS with degree $p$ suffer $O(Md^{2p})$ training costs that explode exponentially with both dimension and polynomial degree, making it completely infeasible beyond low-dimensional problems.

\begin{table}[htbp]
  \centering
  \caption{Computational Complexity by Method}
  \label{tab:complexity_comparison}
  \begin{tabular}{llll}
    \hline
    \textbf{Method}  & \textbf{Training}            & \textbf{Prediction} & \textbf{Memory} \\
    \hline
    OLS (degree $p$) & $O(Md^{2p})$                 & $O(d^p)$            & $O(d^{2p})$     \\
    Ridge            & $O(Md^2)$                    & $O(d)$              & $O(d^2)$        \\
    LASSO            & $O(Md^2)$                    & $O(d)$              & $O(d^2)$        \\
    Random Forests   & $O(M \log (M)  T  \sqrt{d})$ & $O(T  \log (M))$    & $O(T  M)$       \\
    LightGBM         & $O(M  d  T)$                 & $O(T  \log (M))$    & $O(T + b  d)$   \\
    $k$-NN           & $O(Md)$                      & $O(Md + M\log (M))$ & $O(Md)$         \\
    PCA-$k$-NN       & $O(Md^2 + Md)$               & $O(Mk' + M\log M)$  & $O(Mk')$        \\
    Neural Networks  & $O(M  d  h  e)$              & $O(d  h)$           & $O(d  h + h^2)$ \\
    \hline
  \end{tabular}

  \medskip
  \footnotesize
  Note: $M$: samples, $d$: dimension, $T$: trees, $h$: hidden width, $e$: epochs, $p$: polynomial degree, $b$: bins in histogram, $k'$: PCA components
\end{table}

\cref{fig:scaling_all} shows training and prediction times for models using the hyperparameters from our experiments in \cref{sec:numerical}.
\begin{figure*}[htbp]
  \centering
  \includegraphics[width=\textwidth]{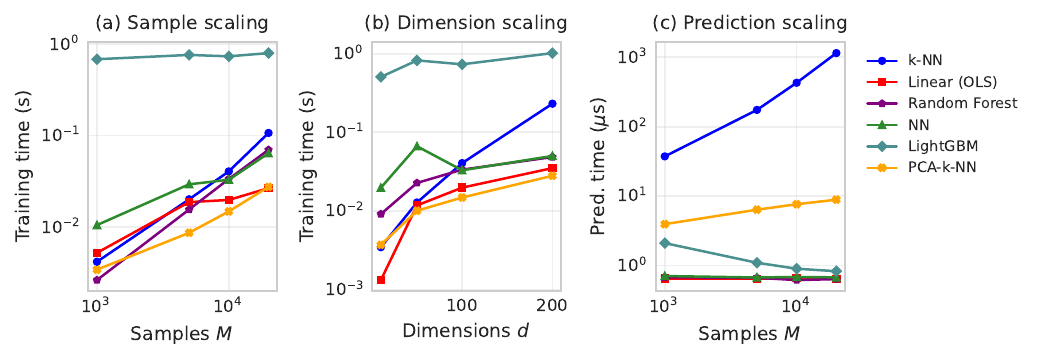}
  \caption{Model scaling analysis: (a) training time vs samples, (b) training time vs dimensions, (c) prediction time. Training times reflect the cost of training one model for a single time step and mode; total algorithm runtime follows the complexity formula in \cref{sec:algorithm}. Our algorithm in \cref{sec:algorithm} required minutes to an hour depending on the specific regression model, hyperparameters, and which experiment we ran.}
  \label{fig:scaling_all}
\end{figure*}

\subsection{Linear models}
All linear models are configured to first standardize the input features to zero mean and unit variance. This improves numerical stability and ensures regularization is applied uniformly across features.

\textbf{Ordinary Least Squares} serves as our baseline, using polynomial degree 6 for $d \leq 2$ to capture nonlinear decision boundaries, and degree 1 otherwise to avoid parameter explosion.

\textbf{Ridge regression} ($\lambda = 0.1$) handles correlated state variables common in optimal switching problems, where for instance asset prices move together. The L2 penalty stabilizes estimates in these ill-conditioned settings without eliminating potentially useful features.

\textbf{LASSO regression} ($\lambda = 0.1$) performs automatic feature selection through L1 penalties, identifying situations where switching decisions depend on few key variables rather than the full state space. This sparsity can improve both interpretability and performance in higher dimensions.

\subsection{Tree-based methods}
\

\textbf{Random forests} (25 trees, depth 3) reduce noise through bootstrap aggregation-averaging predictions across trees trained on different data subsets smooths Monte Carlo error. Shallow trees prevent individual trees from overfitting while ensemble averaging provides stability across the hundreds of models required for optimal switching.

\textbf{LightGBM} (200 iterations, learning rate 0.05) achieves robustness through regularization: L2 penalties, 80\% data sub-sampling, minimum 100 points per leaf. The histogram approach also cuts memory usage in high dimensions.

\subsection{Local regression methods}
\

\textbf{$k$-Nearest Neighbors} ($k=10$) averages over neighborhoods, naturally smoothing Monte Carlo noise without distributional assumptions. 

Standard $k$-NN fails in dimensions $\sim 10$ due to distance concentration. We test PCA-$k$-NN as a dimensionality reduction approach, accepting information loss for meaningful nearest neighbor distances.

\textbf{PCA-$k$-NN} (6 components) projects to principal components before applying $k$-NN.

\subsection{Neural networks}
\

\textbf{Architecture exploration.}  We explored various feedforward neural network configurations to ensure fair comparison, testing ReLU and Tanh activations, depths up to 4 layers, widths up to 512 units, different optimizer parameters (learning rates, decay schedules), with and without dropout, variable-breadth architectures, and both with and without early stopping.

Based on this exploration, we selected architectures up to 128 hidden units and 2 layers, using dropout for regularization and ADAM optimization with exponential learning rate decay. These configurations provided the best performance across experiments while maintaining training times comparable to other methods.

\textbf{Neural networks exploit temporal structure} through backward initialization: parameters for time-step $n$ initialize from time-step $n+1$ with small Gaussian perturbations ($\theta_n = \theta_{n+1} + 0.005 \cdot \mathcal{N}(0,1)$). This leverages the assumption that $\mathcal{R}_{n,j} \approx \mathcal{R}_{n+1,j}$ since continuation values change smoothly over small time-intervals $\Delta t$.

More detailed specifications of architectures, implementation details, and a summary of hyperparameters for the different regression models can be found in \cref{app:implementation}.

\section{Theoretical guarantees for the Longstaff-Schwartz algorithm} \label{sec:theoretical}

In this section we provide theoretical guarantees for the Longstaff-Schwartz algorithm when the regression method is $k$-nearest neighbor regression (foreshadowing the results of our numerical experiments, this regression model also performs well in practice). We first treat the case when the one-step transition kernel of the discretized \emph{diffusion} dynamics admits sub-Gaussian (Gaussian-type) bounds, and then we treat a \emph{jump-diffusion} setting where the corresponding bounds are sub-exponential, which in particular covers the one-dimensional jump-diffusion case.

The $k$-nearest neighbor estimator at time-step $n$ and mode $j$ is defined by:
\begin{equation}\label{eq:knn}
  R_{n,j}(x) = \frac{1}{k}\sum_{s \in \mathcal{N}(x)} Y_{j,t_n}^s,
\end{equation}
where $\mathcal{N}(x)$ are the indices of the $k$ nearest neighbors of $x$ in the training set.
\subsection{The sub-Gaussian case}
We will here assume that our Markov process has a transition density that satisfies the following bounds: For a fixed $\Delta t$, there exist constants $\lambda_1,\lambda_2>0$ such that for every $x,y \in \R^d$ we have
\begin{equation} \label{eq:transition:bounds}
  \begin{split}
    |\rho(x,t;y,t+\Delta t)|          & \leq \frac{1}{\lambda_1} \exp(-\lambda_1 \|x-y\|^2),
    \\
    |\nabla_x \rho(x,t;y,t+\Delta t)| & \leq \frac{1}{\lambda_2} \exp(-\lambda_2 \|x-y\|^2).
  \end{split}
\end{equation}
\begin{remark}
  If $X_t$ is a diffusion process, the above estimates hold under rather mild conditions on the drift and diffusion terms.
\end{remark}

Recall that $\mathcal{X} = \{X^{s}_{t_0},\ldots,X^{s}_{t_N}; s=1,\ldots,M\}$ is the set of all states at all times for the $M$ independent trajectories. Then if $\widehat{V}_j(t_{n+1},x)$ is an approximation of the value function $V_j(t_{n+1},x)$, the per-mode continuation value is
\begin{equation} \label{eq:gj}
  g_j(t_n,x) = \E\left[\widehat{V}_j(t_{n+1},\widetilde{X}_{t_{n+1}})
    \,\middle\vert\, \widetilde{X}_{t_n} = x\right].
\end{equation}
In the above $\widetilde{X}_{t_{n+1}}$ is an independent copy of the process $X_t$ at time $t_{n+1}$.
The approximate value function, which serves as the target in our theoretical analysis, is then
\begin{equation} \label{eq:tildeV}
  \widetilde{V}_i(t_n,x) = \max_{j=1,\dots,D}\left[\Delta t\,f_j(t_n,x)
    - c_{ij}(t_n,x) + g_j(t_n,x)\right],
\end{equation}
which uses exact continuation values given $\widehat{V}_j(t_{n+1},\cdot)$ from the previous step.

Let $k$ be a positive integer. Then the $k$-NN estimator, see \cref{eq:knn} for functional form, of $g_j(t_n,x)$ using the data pairs in \cref{eq:regressionPairs} is given by
\begin{equation} \label{eq:gjtilde}
  \widetilde{g}_j(t_n,x) = \frac{1}{k m_Y} \sum_{s \in \mathcal{N}_{t_n}(x)} \sum_{\hat{s}=1}^{m_Y}
    \widehat{V}_j(t_{n+1}, \widetilde{X}_{t_{n+1}}^{s,\hat{s}}),
\end{equation}
where $(\widetilde{X}_{t_{n+1}}^{s,1},\ldots,\widetilde{X}_{t_{n+1}}^{s,m_Y})$ are independent copies of the process $X_{t}$ started at $X_{t_n}^{s}$ at time $t_n$ and $\mathcal{N}_{t_n}(x)$ is the set of $k$-nearest neighbors of $x$ out of the set $\mathcal{X}$ at time $t_n$ with respect to the Euclidean distance.

The final estimate at time $t_n$ of the continuation value $g_j(t_n,x)$ is the following truncated version of $\widetilde{g}_j(t_n,x)$,
\begin{equation} \label{eq:truncation}
  \widehat{g}_j(t_n,x) = \min\{\widetilde{g}_j(t_n,x), 2 C_0(\|x\|+1)\},
\end{equation}
where $C_0$ is a given constant. The value function is then reconstructed by taking the maximum over modes, as in \cref{eq:Vhat:reconstruction}:
\begin{equation} \label{eq:Vhat:theoretical}
  \widehat{V}_i(t_n,x) = \max_{j=1,\dots,D}\left[\Delta t\,f_j(t_n,x)
    - c_{ij}(t_n,x) + \widehat{g}_j(t_n,x)\right].
\end{equation}

\begin{remark}
  The truncated continuation value $\widehat{g}_j(t_n,x)$ is defined to ensure that the growth condition in the following theorem is always satisfied, although the constant $C_0$ needs to grow as $n$ becomes smaller. Thus, the following theorem gives us the one-step error in supremum norm inside a fixed cube $Q$.
\end{remark}

\begin{theorem} \label{thm:1}
  Let \cref{eq:transition:bounds} hold and assume that $0 \leq \widehat{V}_i(t_{n+1},x) \leq L (\|x\|+1)$ for all $i = 1,\dots,D$ and some $L > 0$.
  Then for any fixed bounded set $Q \subset \R^d$ there exist positive constants
  $C_0$ (used in \cref{eq:truncation}), $C_1$, $C_2$ and $\sigma$, depending only on $d$, $L$, $Q$, $\lambda_1$ and $\lambda_2$, such that for any $\delta > 0$ it holds
  \begin{multline*}
    \P(\sup_{x \in Q} |\widehat{V}_i(t_n,x) - \widetilde{V}_i(t_n,x)| \geq \delta) \leq
    2D{\binom{M}{k}} \exp \left (-\frac{k m_Y\delta^2}{18 \sigma^2} \right )
    \\
    + D \cdot
    \begin{cases}
      C_1 \frac{|Q|}{\delta^d} \exp \left (-\frac{M \left (\frac{M-k}{M} - \sup_Q p_{\delta/(4 C_2)}^c \right )_+^2}{2} \right )            & \text{if } \delta/C_2 \leq 1,
      \\
      C_1 \frac{|Q|}{\delta^{d/2}} \exp \left (-\frac{M \left (\frac{M-k}{M} - \sup_Q p_{\sqrt{\delta/(4 C_2)}}^c \right )_+^2}{2} \right ) & \text{if } \delta/C_2 > 1,
    \end{cases}
  \end{multline*}
  where $D$ is the number of modes, $p_{r}^c = \P(X_{t_n} \not \in B(x,r))$, $B(x,r)$ is the Euclidean ball of radius $r$ centered at $x \in \R^d$, and $|Q|$ is the volume of the set $Q$.
\end{theorem}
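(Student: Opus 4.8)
\emph{Proof idea.} The plan is to condition on the design $D_T$, split the error into a \emph{Monte Carlo} part and a \emph{localization} part, and union-bound each --- over the finitely many possible $k$-neighbour sets for the first, and over a net of $Q$ for the second. A preliminary step records that the target $\widetilde V_i(t_n,\cdot)$ of \eqref{eq:tildeV} inherits from \eqref{eq:transition:bounds} and the assumptions on $f_j,c_{ij},\widehat V_j(t_{n+1},\cdot)$ both a linear growth bound $0\le\widetilde V_i(t_n,x)\le C(\|x\|+1)$ and a gradient bound $|\nabla_x\widetilde V_i(t_n,x)|\le C(\|x\|+1)$: differentiating under the integral against $\rho$ and its gradient, the Gaussian kernels in \eqref{eq:transition:bounds} absorb the linear growth of the integrand. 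Hence $\widetilde V_i(t_n,\cdot)$ is Lipschitz on $Q$ with a constant $C_2$ depending on $Q$, and, choosing $C_0\ge C/2$, the truncation \eqref{eq:truncation} is harmless: since $t\mapsto\min\{t,2C_0(\|x\|+1)\}$ is $1$-Lipschitz and $\widetilde V_i(t_n,x)\le 2C_0(\|x\|+1)$, we get $|\widehat V_i(t_n,x)-\widetilde V_i(t_n,x)|\le|\widehat{\widetilde V}_i(t_n,x)-\widetilde V_i(t_n,x)|$ (this is precisely why $C_0$ must grow as $n$ decreases, matching the remark). Finally, since $\mathcal N_{t_n}(x)$ is a function of $D_T$ and the inner one-step paths $\widetilde X_{t_{n+1}}^{s,\hat s}$ are fresh copies started at $X_{t_n}^s$, the definition \eqref{eq:tildeV} gives $\E[\widehat{\widetilde V}_i(t_n,x)\mid D_T]=\frac1k\sum_{s\in\mathcal N_{t_n}(x)}\widetilde V_i(t_n,X_{t_n}^s)$, so $\widehat{\widetilde V}_i-\widetilde V_i=A(x)+B(x)$ with $A(x)=\widehat{\widetilde V}_i(t_n,x)-\frac1k\sum_{s\in\mathcal N_{t_n}(x)}\widetilde V_i(t_n,X_{t_n}^s)$ centred given $D_T$ and $B(x)=\frac1k\sum_{s\in\mathcal N_{t_n}(x)}\widetilde V_i(t_n,X_{t_n}^s)-\widetilde V_i(t_n,x)$.

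For the Monte Carlo part, conditional on $D_T$ the quantity $A(x)$ is the average of $km_Y$ independent centred summands $\max_j(\Delta_t f_j(t_n,X_{t_n}^s)-c_{ij}(t_n,X_{t_n}^s)+\widehat V_j(t_{n+1},\widetilde X_{t_{n+1}}^{s,\hat s}))-\widetilde V_i(t_n,X_{t_n}^s)$. Each summand is a globally Lipschitz function of the one-step increment $\widetilde X_{t_{n+1}}^{s,\hat s}-X_{t_n}^s$ (Lipschitz constant controlled by $L$, as a maximum of sums of Lipschitz functions), and by the Gaussian upper bound on $\rho$ this increment is sub-Gaussian with a variance proxy depending only on $\lambda_1,d$ and \emph{not} on the starting point $X_{t_n}^s$; the $X_{t_n}^s$-dependent additive shift cancels under centring. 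Hence $A(x)$ is sub-Gaussian with variance proxy $\sigma^2/(km_Y)$ for a $\sigma$ independent of $D_T$, and since $\{\mathcal N_{t_n}(x):x\in\R^d\}$ contains at most $\binom{M}{k}$ sets, a union bound over neighbour sets followed by integration over $D_T$ gives
\[
  \P\!\left(\sup_{x\in Q}|A(x)|\ge \tfrac{\delta}{\sqrt3}\right)\le 2\binom{M}{k}\exp\!\left(-\frac{km_Y\delta^2}{6\sigma^2}\right).
\]

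For the localization part, the growth/Lipschitz estimate gives $|B(x)|\le C_2\, r_k(x)\max(1,r_k(x))$, where $r_k(x)$ is the distance from $x$ to its $k$-th nearest point among $X_{t_n}^1,\dots,X_{t_n}^M$; the factor $\max(1,r_k(x))$, coming from the linearly growing Lipschitz constant of $\widetilde V_i(t_n,\cdot)$, is what produces the two regimes. Requiring $|B(x)|\le(1-1/\sqrt3)\delta$ on $Q$ amounts to $\sup_{x\in Q}r_k(x)\le r$ with $r$ of order $\delta/C_2$ when $\delta/C_2\lesssim 1$ and of order $\sqrt{\delta/C_2}$ when $\delta/C_2\gtrsim 1$. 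Since $x\mapsto r_k(x)$ is $1$-Lipschitz, it suffices to control $r_k$ on a net of $Q$ of cardinality $\lesssim |Q|/r^d$ (giving the $|Q|/\delta^{d}$ resp. $|Q|/\delta^{d/2}$ prefactors), and for a fixed net point $x_\ell$ the event $\{r_k(x_\ell)\ge r\}$ means fewer than $k$ of the i.i.d. variables $X_{t_n}^1,\dots,X_{t_n}^M$ fall in $B(x_\ell,r)$, i.e. $\mathrm{Bin}(M,p_\ell)\le k-1$ with $p_\ell=\P(X_{t_n}\in B(x_\ell,r))\ge 1-\sup_Q p^c_r$. A Hoeffding lower-tail bound on $\mathrm{Bin}(M,p_\ell)$ with gap $\frac{M-k}{M}-\sup_Q p^c_r$ (truncated at $0$), a union bound over the net, and the substitution $r\leftrightarrow\delta$ in the two regimes yield the two-case term; combining with the display via a union bound over $A$ and $B$ finishes the proof.

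The step I expect to be the main obstacle is the uniform-in-design control of the Monte Carlo summands: one must check that the one-step increment of the (Euler-discretized, jump-augmented) process is sub-Gaussian with a variance proxy uniform over starting points of arbitrarily large norm --- exactly what is extracted from the Gaussian-type bound on $\rho$ in \eqref{eq:transition:bounds} --- and that composing with the non-smooth map $x\mapsto\max_j(\Delta_t f_j-c_{ij}+\widehat V_j(t_{n+1},\cdot))$ preserves sub-Gaussianity with a constant governed by $L$ alone. The remaining technical points --- deriving the linearly growing Lipschitz bound for $\widetilde V_i(t_n,\cdot)$ from the gradient estimate on $\rho$ (hence the $Q$-dependence of $C_2$), the bookkeeping for $C_0$, and the two-regime covering count --- are routine once the growth and Lipschitz estimates are in hand.
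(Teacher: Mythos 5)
Your proposal is correct and follows the same architecture as the paper's proof: the growth/gradient bound for $\widetilde V_i(t_n,\cdot)$ obtained by differentiating under the transition density (\cref{lemma:tildeV:lipschitz}), a decomposition into a conditionally centred Monte Carlo term and a $k$-NN localization term via the conditional mean $\frac1k\sum_{s\in\mathcal N_{t_n}(x)}\widetilde V_i(t_n,X_{t_n}^s)$, sub-Gaussian concentration union-bounded over the $\binom{M}{k}$ possible neighbour sets (the paper phrases this as maximizing a linear functional over a polytope with those vertices, which is the same bound), and the two-regime covering/Hoeffding argument for $\sup_Q U_n(x)$ (\cref{lemma:bartildeV:tildeV}). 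The one place you genuinely depart from the paper is the truncation: you observe that $t\mapsto\min\{t,2C_0(\|x\|+1)\}$ is $1$-Lipschitz and that $\widetilde V_i(t_n,x)\leq 2C_0(\|x\|+1)$, so $|\widehat V_i-\widetilde V_i|\leq|\widehat{\widetilde V}_i-\widetilde V_i|$ holds deterministically. This replaces the paper's separate probabilistic estimate (\cref{lemma:hatV:hattildeV}) and reduces the final union bound from three terms to two; it is a cleaner route and, if anything, gives slightly better constants. One caveat: your assertion that each Monte Carlo summand is a \emph{globally Lipschitz} function of the one-step increment is not supported by the hypotheses, since $\widehat V_j(t_{n+1},\cdot)$ is only assumed to satisfy the growth bound $0\leq\widehat V_j\leq L(\|x\|+1)$ (it is a truncated $k$-NN estimator, not Lipschitz); the sub-Gaussianity with a design-independent variance proxy must be extracted from the growth bound together with \cref{eq:transition:bounds}, which is exactly what \cref{lemma:hattildeV:bartildeV} asserts, so you are at the same level of rigor as the paper at the step you yourself identified as the main obstacle.
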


% ...existing code...
In the above theorem, certain relations between $k,M$ and $m_Y$ need to hold in order for the estimate to become useful.

\medskip
\noindent\textbf{(i) Neighborhood size: $k$ cannot be too large.}
For a given $\delta>0$ we need
\begin{equation*}
  \frac{M-k}{M} - p_0 > 0 \qquad \Longleftrightarrow \qquad k < M(1-p_0),
\end{equation*}
where $p_0$ corresponds to the two $\delta$-regimes in \cref{thm:1}:
\begin{equation*}
  p_0 = \sup_{x\in Q} p_{\delta/(4C_2)}^c(x)\quad \text{if }\delta/C_2\le 1,
  \qquad\text{and}\qquad
  p_0 = \sup_{x\in Q} p_{\sqrt{\delta/(4C_2)}}^c(x)\quad \text{if }\delta/C_2>1.
\end{equation*}
Intuitively, if $k$ is too large then the $k$-NN estimator oversmooths and cannot track the local structure of the continuation value.

\medskip
\noindent\textbf{(ii) One-step Monte Carlo noise: $m_Y$ must be large enough.}
The noise induced by the one-step simulations is controlled by $m_Y$. While averaging over $k$ neighbors provides additional concentration, we also need the leading concentration term to vanish, i.e.
\begin{equation*}
  {\binom{M}{k}} \exp\!\left(-\frac{k m_Y\delta^2}{18 \sigma^2}\right)\to 0.
\end{equation*}
Using the standard bound ${\binom{M}{k}} \le (eM/k)^k$ (e.g.\ when $k\le M/2$), it suffices that
\begin{equation*}
  \exp\!\left(k\Big(\log(M/k)+1-\frac{m_Y\delta^2}{18\sigma^2}\Big)\right)\to 0.
\end{equation*}
Hence, a sufficient condition is
\begin{equation*}
  m_Y > \frac{18\sigma^2(\log(M/k)+1)}{\delta^2}\quad \text{if }k\to\infty,
\end{equation*}
while if $k$ is fixed, it is enough that $m_Y$ grows strictly faster than $\log M$.
% ...existing code...

\begin{theorem} \label{thm:2}
  Under the same assumptions of \cref{thm:1} we have that for any fixed point $x \in \R^d$, there exist positive constants $C_0$ (used in \cref{eq:truncation}), $C_2$ and $\sigma$, depending only on $d$, $L$, $|x|$, $\lambda_1$ and $\lambda_2$, such that for any $\delta > 0$ it holds
  \begin{multline*}
    \P (|\widehat{V}_i(t_n,x) - \widetilde{V}_i(t_n,x)| \geq \delta)
    \leq 2D \exp \left (-\frac{k m_Y\delta^2}{18 \sigma^2} \right )
    \\
    +
    2D
    \begin{cases}
      \exp \left (-\frac{M \left (\frac{M-k}{M} - p_{\delta/(4 C_2)}^c \right )_+^2}{2} \right )        & \text{if } \delta/C_2 \leq 1,
      \\
      \exp \left (-\frac{M \left (\frac{M-k}{M} - p_{\sqrt{\delta/(4 C_2)}}^c \right )_+^2}{2} \right ) & \text{if } \delta/C_2 > 1,
    \end{cases}
  \end{multline*}
  where $p_{r}^c = \P(X_{t_n} \not \in B(x,r))$.
\end{theorem}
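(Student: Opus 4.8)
The strategy is to mimic the proof of \cref{thm:1}, exploiting that for a single fixed $x$ one does not need a $\delta$-net over a set $Q$: this is precisely what removes the covering factor $|Q|/\delta^{d}$ and, in the fluctuation estimate, the union bound over the $\binom{M}{k}$ admissible nearest-neighbour patterns, leaving the bare factors $2$ in the statement. I would first record three elementary facts about \eqref{eq:Vhattilde}--\eqref{eq:truncation}. Taking $j=i$ in the maximum and using $f_i\ge0$, $c_{ii}=0$, $\widehat V_i(t_{n+1},\cdot)\ge0$ shows every summand is nonnegative, hence $\widehat{\widetilde V}_i(t_n,\cdot)\ge0$ and likewise $\widetilde V_i(t_n,\cdot)\ge0$; the growth hypotheses on $f_j,c_{ij},\widehat V_j$ together with $\E[\|X_{t_{n+1}}\|\mid X_{t_n}=x]\le C(\|x\|+1)$, which follows from \eqref{eq:transition:bounds}, give $\widetilde V_i(t_n,x)\le C_0(\|x\|+1)$; and therefore, for $C_0$ chosen as in \eqref{eq:truncation}, the map $v\mapsto\min\{v,2C_0(\|x\|+1)\}$ is a contraction onto a set containing $\widetilde V_i(t_n,x)$, so that $|\widehat V_i(t_n,x)-\widetilde V_i(t_n,x)|\le|\widehat{\widetilde V}_i(t_n,x)-\widetilde V_i(t_n,x)|\le 2C_0(\|x\|+1)$. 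In particular one may assume $\delta\le 2C_0(\|x\|+1)$, which is what ultimately keeps all the constants below --- $\sigma$ in particular --- independent of $\delta$. It then suffices to bound $\P(|E_{\mathrm{mc}}|\ge\delta/2)+\P(|E_{\mathrm{bias}}|\ge\delta/2)$, where $E_{\mathrm{bias}}=\big(\tfrac1k\sum_{s\in\mathcal N_{t_n}(x)}\widetilde V_i(t_n,X^{s}_{t_n})\big)-\widetilde V_i(t_n,x)$ is the $k$-NN approximation error of the deterministic function $\widetilde V_i(t_n,\cdot)$ and $E_{\mathrm{mc}}=\widehat{\widetilde V}_i(t_n,x)-\tfrac1k\sum_{s\in\mathcal N_{t_n}(x)}\widetilde V_i(t_n,X^{s}_{t_n})$ is the residual Monte Carlo noise.

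For the bias, the plan is to prove a local modulus estimate for the target by differentiating \eqref{eq:tildeV} under the integral and pairing the gradient bound in \eqref{eq:transition:bounds} against the linearly growing integrand, obtaining $|\widetilde V_i(t_n,z)-\widetilde V_i(t_n,x)|\le C\|z-x\|\,(\|x\|+\|z-x\|+1)$. On the event $A_r=\{\text{all }k\text{ nearest neighbours of }x\text{ lie in }B(x,r)\}$ this gives $|E_{\mathrm{bias}}|\le Cr(\|x\|+r+1)$, which is $\le\delta/2$ once $r\le\delta/(4C_2)$ when $\delta/C_2\le1$ (so $r\le1$ and the quadratic term is harmless) and once $r\le\sqrt{\delta/(4C_2)}$ when $\delta/C_2>1$; this quadratic-at-large-scale behaviour is exactly the origin of the dichotomy in the statement. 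Choosing $r$ equal to the relevant threshold gives $\P(|E_{\mathrm{bias}}|\ge\delta/2)\le\P(A_r^{c})$, and $A_r^{c}$ is exactly the event that fewer than $k$ of the i.i.d.\ points $X^{1}_{t_n},\dots,X^{M}_{t_n}$ fall in $B(x,r)$; that count is $\mathrm{Binomial}(M,1-p_r^{c})$, so the one-sided Hoeffding bound gives $\P(A_r^{c})\le\exp\!\big(-M((M-k)/M-p_r^{c})_+^{2}/2\big)$, the stated second term.

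For the Monte Carlo noise, I would condition on the training design, i.e.\ on the values $X^{1}_{t_n},\dots,X^{M}_{t_n}$. Given the design the neighbour set is frozen, $\widehat{\widetilde V}_i(t_n,x)$ is an average of the $km_Y$ conditionally independent variables $\max_j(\Delta_t f_j(t_n,X^{s}_{t_n})-c_{ij}(t_n,X^{s}_{t_n})+\widehat V_j(t_{n+1},\widetilde X^{s,\hat s}_{t_{n+1}}))$, and each has conditional mean exactly $\widetilde V_i(t_n,X^{s}_{t_n})$ by \eqref{eq:tildeV}; moreover \eqref{eq:transition:bounds} makes the increment $\widetilde X^{s,\hat s}_{t_{n+1}}-X^{s}_{t_n}$ sub-Gaussian uniformly in its starting point, so on $A_r$ (where $\|X^{s}_{t_n}\|\le\|x\|+r$ is bounded, using $\delta\le 2C_0(\|x\|+1)$) each centred summand is sub-Gaussian with a parameter $\sigma^{2}$ depending only on $d,L,\Delta_t,|x|,\lambda_1,\lambda_2$. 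Hoeffding's inequality for sums of conditionally independent sub-Gaussians then yields $\P(|E_{\mathrm{mc}}|\ge\delta/2\mid\text{design})\le2\exp(-km_Y\delta^{2}/(6\sigma^{2}))$ on $A_r$ --- the $6$ absorbing the $\tfrac12$-split and the sub-Gaussian-to-tail constants --- whence $\P(|E_{\mathrm{mc}}|\ge\delta/2)\le2\exp(-km_Y\delta^{2}/(6\sigma^{2}))+\P(A_r^{c})$. Adding this to the bias estimate and collecting the two copies of $\P(A_r^{c})$ reproduces the displayed inequality.

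The step I expect to require the most care is the local modulus estimate for $\widetilde V_i(t_n,\cdot)$ --- justifying differentiation under the integral and controlling the Gaussian-type gradient bound against a linearly growing integrand --- since it is both where the case distinction is forced and what ties into the bookkeeping of the truncation constant $C_0$ across time steps; the remaining pieces, the binomial tail and the conditionally sub-Gaussian Hoeffding bound, are comparatively routine. Because this estimate and the sub-Gaussian transition machinery are shared with \cref{thm:1}, and the pointwise claim avoids both the covering step and the neighbour-pattern union bound, \cref{thm:2} is in effect a simplification of \cref{thm:1} once that machinery is in place.
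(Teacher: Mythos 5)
Your proof is correct and its two probabilistic ingredients are exactly the paper's: the Monte Carlo term is handled by conditioning on the design and applying Hoeffding to $km_Y$ conditionally independent sub-Gaussian summands with conditional mean $\widetilde V_i(t_n,X^s_{t_n})$ (this is \cref{lemma:hattildeV:bartildeV:2}), and the bias term is reduced, via the local modulus estimate from \cref{lemma:tildeV:lipschitz,lemma:lipthitz:bound}, to the tail of the $k$:th nearest-neighbour distance $U_n(x)$, bounded by a binomial Hoeffding inequality without any covering (this is \cref{lemma:bartildeV:tildeV:2}); the $\delta/C_2\lessgtr 1$ dichotomy arises for the same reason in both arguments. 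The one genuine difference is your treatment of the truncation: the paper keeps a third error term $E_3=|\widehat V_i-\widehat{\widetilde V}_i|$ and bounds it probabilistically by a fixed-$x$ analogue of \cref{lemma:hatV:hattildeV}, splitting $\delta$ three ways, whereas you observe that since $\widetilde V_i(t_n,x)\le C_0(\|x\|+1)\le 2C_0(\|x\|+1)$ the map $v\mapsto\min\{v,2C_0(\|x\|+1)\}$ satisfies $|\min\{v,K\}-w|\le|v-w|$ whenever $w\le K$, so the truncation can only decrease the error and disappears from the decomposition entirely. This is a clean simplification that eliminates one lemma and changes the split from $\delta/3$ to $\delta/2$ (absorbed into $\sigma$ and $C_2$). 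Two cosmetic remarks: your intermediate chain $|\widehat V_i-\widetilde V_i|\le|\widehat{\widetilde V}_i-\widetilde V_i|\le 2C_0(\|x\|+1)$ has a wrong second inequality (the untruncated $\widehat{\widetilde V}_i$ is not bounded); the correct and sufficient statement is the direct bound $|\widehat V_i-\widetilde V_i|\le 2C_0(\|x\|+1)$, and in any case the reduction to $\delta\le 2C_0(\|x\|+1)$ is not needed. Likewise, restricting the Monte Carlo step to the event $A_r$ is unnecessary: the centred summands are sub-Gaussian with a variance proxy depending only on $L$ and $\lambda_1$, uniformly in the starting point, exactly as in the paper's argument.
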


We note that the above theorem has a better concentration property compared to \cref{thm:1}, specifically we can take $m_Y = 1$ as long as $k \to \infty$ as $M \to \infty$ and the classical assumption that $k/M \to 0$ as $M \to \infty$ is satisfied. In this case, for large enough $M$ we have
\begin{equation*}
  \P (|\widehat{V}_i(t_n,x) - \widetilde{V}_i(t_n,x)| \geq \delta) \leq 4D \exp \left (-\frac{k \delta^2}{18 \sigma^2} \right ).
\end{equation*}

\subsection{The sub-exponential case}
We will here assume that our Markov process has a transition density that satisfies the following bounds: For a fixed $\Delta t$, there exist constants $\lambda_1,\lambda_2>0$ such that for every $x,y \in \R^d$ we have
\begin{equation} \label{eq:transition:bounds:subexp}
  \begin{split}
    |\rho(x,t;y,t+\Delta t)|          & \leq \frac{1}{\lambda_1} \exp(-\lambda_1 \|x-y\|),
    \\
    |\nabla_x \rho(x,t;y,t+\Delta t)| & \leq \frac{1}{\lambda_2} \exp(-\lambda_2 \|x-y\|).
  \end{split}
\end{equation}
\begin{remark}
  In \cref{app:3} we show that the above estimates hold for jump-diffusion processes that only have jumps in one direction.
\end{remark}

\begin{theorem} \label{thm:3}
  Let \cref{eq:transition:bounds:subexp} hold and assume that $0 \leq \widehat{V}_i(t_{n+1},x) \leq L (\|x\|+1)$ for all $i = 1,\dots,D$ and some $L > 0$.
  Then for any fixed bounded set $Q \subset \R^d$ there exist positive constants
  $C_0$ (used in \cref{eq:truncation}), $C_1$, $C_2$ and $\sigma$, depending only on $d$, $L$, $Q$, $\lambda_1$ and $\lambda_2$, such that for any $\delta > 0$ it holds
  \begin{multline*}
    \P(\sup_{x \in Q} |\widehat{V}_i(t_n,x) - \widetilde{V}_i(t_n,x)| \geq \delta) \leq
    2D{\binom{M}{k}} \max \left \{ \exp \left (-\frac{k m_Y \delta^2}{18 \sigma^2} \right ), \exp \left (-\frac{k m_Y \delta}{6 \sigma} \right )\right \}
    \\
    + D \cdot
    \begin{cases}
      C_1 \frac{|Q|}{\delta^d} \exp \left (-\frac{M \left (\frac{M-k}{M} - \sup_Q p_{\delta/(4 C_2)}^c \right )_+^2}{2} \right )            & \text{if } \delta/C_2 \leq 1,
      \\
      C_1 \frac{|Q|}{\delta^{d/2}} \exp \left (-\frac{M \left (\frac{M-k}{M} - \sup_Q p_{\sqrt{\delta/(4 C_2)}}^c \right )_+^2}{2} \right ) & \text{if } \delta/C_2 > 1,
    \end{cases}
  \end{multline*}
  where $p_{r}^c = \P(X_{t_n} \not \in B(x,r))$.
\end{theorem}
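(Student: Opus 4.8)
The plan is to mirror the three–part scheme behind \cref{thm:1}: a truncation reduction, a deterministic bias estimate controlled by a covering argument, and a conditional concentration estimate for the stochastic part. The one genuinely new point is that the sub–exponential bounds \eqref{eq:transition:bounds:subexp} only give a Bernstein–type tail where the sub–Gaussian case had a Hoeffding–type one, and this is exactly what turns the single Gaussian factor into $\max\{\exp(-k m_Y\delta^2/6\sigma^2),\exp(-k m_Y\delta/4\sigma)\}$. Write $\Phi_i(x',\widetilde x)=\max_j(\Delta_t f_j(t_n,x')-c_{ij}(t_n,x')+\widehat V_j(t_{n+1},\widetilde x))$, so that $\widetilde V_i(t_n,x')=\E[\Phi_i(x',\widetilde X_{t_{n+1}})\mid \widetilde X_{t_n}=x']$ and $\widehat{\widetilde V}_i(t_n,x)$ in \eqref{eq:Vhattilde} is the average of $\Phi_i(X_{t_n}^s,\widetilde X_{t_{n+1}}^{s,\hat s})$ over the $k$ neighbours and the $m_Y$ one–step copies. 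Taking $j=i$ (with $c_{ii}=0$, $f_i\ge0$, $\widehat V_i\ge0$) gives $\Phi_i\ge0$, and combining the linear growth of $f_j$, $c_{ij}$, $\widehat V_j(t_{n+1},\cdot)$ with the fact that $\E\|\widetilde X_{t_{n+1}}-x'\|$ is bounded by a constant depending only on $d,\lambda_1$ (from \eqref{eq:transition:bounds:subexp}) yields $0\le\widetilde V_i(t_n,x)\le C_0(\|x\|+1)$ for suitable $C_0$; hence the truncation in \eqref{eq:truncation} only moves $\widehat{\widetilde V}_i$ towards $\widetilde V_i$ and $|\widehat V_i(t_n,x)-\widetilde V_i(t_n,x)|\le|\widehat{\widetilde V}_i(t_n,x)-\widetilde V_i(t_n,x)|$. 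Conditioning on $D_T$, I would split the right–hand side into a bias term $\tfrac1k\sum_{s\in\mathcal N_{t_n}(x)}(\widetilde V_i(t_n,X_{t_n}^s)-\widetilde V_i(t_n,x))$ and a noise term, the centred average of the $\Phi_i(X_{t_n}^s,\widetilde X_{t_{n+1}}^{s,\hat s})$.

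For the bias term, differentiating $\int\widehat V_j(t_{n+1},y)\,\rho(x',t_n;y,t_{n+1})\,dy$ and inserting the gradient bound in \eqref{eq:transition:bounds:subexp} (after the substitution $y=x'+z$) shows that $\widetilde V_i(t_n,\cdot)$ is locally Lipschitz with local constant $\lesssim L(1+\|x'\|)$. Therefore, writing $R_Q=\sup_{x\in Q}\|x\|$, on the event $\mathcal G_r=\{\sup_{x\in Q}\rho_k(x)\le r\}$ --- with $\rho_k(x)$ the distance from $x$ to its $k$-th nearest training point at time $t_n$ --- the bias is at most $C(R_Q+r)\,r$. Choosing $r=r_\delta$ equal to $\delta/(4C_2)$ when $\delta/C_2\le1$ and to $\sqrt{\delta/(4C_2)}$ when $\delta/C_2>1$, with $C_2$ large enough, forces the bias below $\delta/2$; the two regimes are unavoidable because the linearly growing Lipschitz constant makes this bound quadratic in the radius once $r>1$. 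Finally $\P(\mathcal G_{r_\delta}^c)$ is estimated by a covering argument: cover $Q$ by $\lesssim|Q|/r_\delta^d$ balls of radius $r_\delta/2$; if $\rho_k(x)>r_\delta$ for some $x\in Q$, one of these balls contains fewer than $k$ of the $M$ i.i.d.\ samples, which by a Chernoff bound has probability $\le\exp(-M((M-k)/M-\sup_Q p^c_{\,\cdot})_+^2/2)$, and the union bound over the cover reproduces exactly the second, case–split term of the statement.

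For the noise term --- the step that differs from \cref{thm:1} --- I would observe that $\Phi_i(x',\widetilde X_{t_{n+1}})$ is the sum of a quantity depending only on $x'$ and a quantity lying in $[0,L\|\widetilde X_{t_{n+1}}-x'\|]$ up to an additive constant; the first cancels against the conditional mean $\widetilde V_i(t_n,x')$ upon centring, so the centred variable $\Phi_i(x',\widetilde X_{t_{n+1}})-\widetilde V_i(t_n,x')$ has a tail governed only by $\|\widetilde X_{t_{n+1}}-x'\|$, which by \eqref{eq:transition:bounds:subexp} is sub–exponential with a parameter uniform over $x'$ in any fixed bounded set; on $\mathcal G_{r_\delta}$ every neighbour of a point of $Q$ lies in the bounded $r_\delta$-enlargement of $Q$, which pins down the constant $\sigma$. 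For a fixed $k$-subset $S\subseteq\{1,\dots,M\}$ the $k m_Y$ centred variables indexed by $s\in S$, $\hat s\le m_Y$, are independent given $D_T$, so the sub–exponential Bernstein inequality bounds the probability that their average exceeds half of $\delta$ by $2\max\{\exp(-k m_Y\delta^2/6\sigma^2),\exp(-k m_Y\delta/4\sigma)\}$ for the constants so calibrated; summing over the at most $\binom{M}{k}$ possible values of $\mathcal N_{t_n}(x)$, $x\in Q$, and taking expectation in $D_T$ gives the first term. Combining the two terms on $\mathcal G_{r_\delta}$, adding $\P(\mathcal G_{r_\delta}^c)$, and using the truncation reduction would complete the argument.

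The step I expect to be the main obstacle is precisely the uniform sub–exponential control above: one must check carefully that the unbounded growth of $\Phi_i$ in the neighbour location is entirely deterministic and hence disappears under centring, leaving only the one–step increment, and that the enlargement radius $r_\delta$ --- which exceeds $1$ in the large-$\delta$ regime --- still confines the relevant neighbour locations to a set controlled by $Q$ alone, so that $C_0$, $C_2$ and $\sigma$ can be fixed consistently (in the order $Q$, then $C_0$, then $C_2$, then $\sigma$). Everything else is a line–by–line repetition of the sub–Gaussian argument with Hoeffding's inequality replaced by Bernstein's.
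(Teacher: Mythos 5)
Your proposal is correct and follows essentially the same route as the paper: the authors rerun the proof of \cref{thm:1} verbatim (the decomposition into one-step noise, neighbour bias controlled by the local Lipschitz estimate and a covering/Hoeffding argument, and truncation error), observing only that the centred one-step errors $\varepsilon_{(s,\hat s)}$ are now sub-exponential so that \cref{lem:subexponential} replaces \cref{lem:concentration}, which is exactly where your $\max\{\exp(-k m_Y\delta^2/6\sigma^2),\exp(-k m_Y\delta/4\sigma)\}$ term comes from. Your two minor deviations are in fact refinements of the paper's argument: you dispose of the truncation term pointwise via $\widetilde V_i\le C_0(\|x\|+1)\le 2C_0(\|x\|+1)$ rather than through the separate probabilistic \cref{lemma:hatV:hattildeV}, and you restrict the neighbours to a bounded enlargement of $Q$ before fixing $\sigma$, whereas the paper simply asserts a location-uniform sub-exponential parameter for the centred errors (your own caveat is warranted here: since $\widehat V_j(t_{n+1},\cdot)$ is only assumed to satisfy a growth bound and not a Lipschitz bound, the fluctuation of $\widehat V_j(t_{n+1},\widetilde X)$ need not vanish under centring and can scale with the neighbour location, so the localization to $\mathcal G_{r_\delta}$ is genuinely needed to pin down $\sigma$).
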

\begin{remark}
  We note that one can extend \cref{thm:3}, using the moment method (see for instance~\cite{boucheron2013concentration}), to the case when \cref{eq:transition:bounds:subexp} is replaced by an upper bound of the form $1/K(\|x-y\|)$ where $K$ is a non-negative polynomial. In this case the decay of the first term on the right-hand side in \cref{thm:3} will depend on the growth of $K$.
\end{remark}

\section{Numerical experiments} \label{sec:numerical}

We test machine learning methods in the Longstaff-Schwartz algorithm across four experiments to evaluate different aspects of method performance: robustness across problem types, multiscale planning, and dimensionality scaling.

Our experimental design uses established examples from~\cite{carmona2008pricing,ACLP12,bayraktar2023neural}, and a new artificial problem that requires multi-timescale planning. All experiments use a 90/10 train/validation split and models, except neural networks, had minimal hyperparameter tuning to assess out-of-the-box performance.

Overview of examples used:
\begin{itemize}
  \item \textbf{Carmona-Ludkovski (CL) and ACLP:} Established examples from the optimal switching literature
  \item \textbf{Artificial planning (BSP):} Designed to test scenarios where myopic strategies should fail due to multi-timescale interactions
  \item \textbf{High-dimensional CL (HCL):} Tests dimensional scaling by extending CL to higher dimensions
\end{itemize}
\subsection{Experiment 1: Carmona-Ludkovski (CL)} \label{sec:numerical:carmona}
The Carmona-Ludkovski experiment models gas-fired power plant management, following the formulation in~\cite{carmona2008pricing} as adjusted by~\cite{bayraktar2023neural}.

The model captures interplay between electricity price $P_t$ and gas price $G_t$, incorporating market features such as mean-reverting dynamics, price correlation, and electricity price spikes.
The state process $X_t=(P_t, G_t)$ combines a jump-diffusion process for electricity price $P_t$ and a diffusion process for gas price $G_t$. In differential form, the dynamics are given by
\begin{equation}
  dX_t = b(X_t) dt + \sigma(X_t) dW_t + dJ_t,
\end{equation}
where $W_t \in \mathbb{R}^2$ is a standard two-dimensional Brownian motion. The jump process $J_t$ affects only the first dimension (electricity price) and is defined as:
\begin{equation}
  J_t = \sum_{i=1}^{N_t} (e^{Y_i} - 1)e_1, \quad N_t \sim \text{Poisson}(32t), \quad Y_i \sim \text{Exponential}(10),
\end{equation}
where $e_1$ denotes the first standard basis vector in $\mathbb{R}^2$. The drift function $b(X_t)$ captures the mean-reverting behavior of the electricity and gas prices:
\begin{equation}
  b(X_t) = \begin{pmatrix}
    5P_t(\log 50 - \log P_t) \\
    2G_t(\log 6 - \log G_t).
  \end{pmatrix}
\end{equation}
The diffusion function incorporates price-dependent volatility and correlation:
\begin{equation}
  \sigma(X_t) = \begin{pmatrix}
    0.5P_t  & 0       \\
    0.32G_t & 0.24G_t
  \end{pmatrix}.
\end{equation}

Managing the power plant, we have three production modes: off, half capacity, and full capacity. The payoff functions for the three modes are:
\begin{equation}
  f_j(t,X_t) = \begin{cases}
    -1,                        & \text{if } j = 1  \\
    0.438(P_t - 7.5G_t) - 1.1, & \text{if } j = 2  \\
    0.876(P_t - 10G_t) - 1.2,  & \text{if } j = 3.
  \end{cases}
\end{equation}
The switching-cost from mode $i$ to mode $j$ (ramping up or down production) is:
\begin{equation}
  c_{ij}(t,X_t) = \begin{cases}
    0,               & \text{if } i = j     \\
    0.01G_t + 0.001, & \text{if } i \neq j.
  \end{cases}
\end{equation}
The model parameters used in our simulations are summarized in \cref{table:carmona}.
\begin{table}[ht]
  \caption{Model parameters for the CL example}
  \label{table:carmona}
  \centering
  \begin{tabular}{@{}llr@{}}
    \toprule
    Parameter                & Value                               & Description                                            \\
    \midrule
    $d$                      & 2                                   & Dimension of random process                            \\
    $D$                      & 3                                   & Number of modes                                        \\
    $M$                      & 50000                               & Number of trajectories                                 \\
    $N$                      & 180                                 & Number of time points                                  \\
    $m_Y$                    & 1                                   & Number of one-step trajectories in training            \\
    $t_\text{start}$         & 0                                   & Start time                                             \\
    $t_\text{end}$           & 0.25                                & End time                                               \\
    $\Delta t$               & $(t_\text{end} - t_\text{start})/N$ & Time step                                              \\
    $\lambda_\text{Poisson}$ & 32                                  & Average number of jumps in electricity prices per year \\
    $\lambda_\text{exp}$     & 10                                  & Rate parameter of exponential jump sizes               \\
    \bottomrule
  \end{tabular}
\end{table}
The initial value $X_0$ is set as:
\begin{equation}\label{eq:carmona:x0}
  X_0 = (50, 6) + \epsilon, \quad \epsilon \sim \mathcal{N}(0, 0.01^2I)
\end{equation}

\begin{remark}
  Our implementation uses a jump intensity of $\lambda_\text{Poisson} = 32$ in the electricity price process, compared to $\lambda_\text{Poisson} = 8$ in the original models of~\cite{carmona2008pricing} and~\cite{bayraktar2023neural}. This modification was motivated by preliminary numerical experiments, which revealed that lower jump intensities resulted in mostly static switching strategies, with most trajectories staying in the same mode through the time horizon. The higher jump frequency induces more frequent transitions between operational modes, allowing for a more thorough examination of the switching boundaries and how the different models perform in these regions.
\end{remark}
\subsection{Experiment 2: A{\"\i}d-Campi-Langrené-Pham (ACLP)}\label{sec:numerical:aid}
The ACLP experiment tests method performance on a higher-dimensional fuel management problem~\cite{ACLP12}, as adapted in~\cite{bayraktar2023neural}, where power plants optimize across multiple fuel types with varying availability and costs. Unlike the two-dimensional CL problem, this 9-dimensional setting challenges methods to handle more complex state interactions while maintaining decision quality.

The model captures a power plant manager choosing between four operational modes, each representing different fuel allocation strategies. The state process $X_t=(Z_t,S_t) \in \mathbb{R}^9$ tracks demand/availability factors ($Z_t$) and various fuel and electricity prices ($S_t$).

In more detail: The state process $X_t=(Z_t,S_t) \in \mathbb{R}^9$ combines two components:
\begin{itemize}
  \item $Z_t = (Z^0_t, Z^1_t, Z^2_t, Z^3_t)$ represents demand and availability rates of three fuel types.
  \item $S_t = (S^0_t, S^1_t, S^2_t, S^3_t, S^4_t)$ represents various prices.
\end{itemize}
These components determine
\begin{itemize}
  \item Electricity demand $D_t = Z^0_t + H^0(t)$,
  \item Availability rates $A_t^\phi = \kappa(Z^\phi_t + H^\phi(t))$ for $\phi = 1,2,3$,
  \item Fuel costs and electricity prices $S_t$,
\end{itemize}
where $H^0(t)$ and $H^\phi(t)$ are seasonal adjustments that ensure demand and availability rates stay within certain bounds, and $\kappa$ is the standard normal cumulative distribution function, setting availability rates between 0 and 1.

The dynamics follow a jump-diffusion process:
\begin{equation}
  \begin{aligned}
    dZ_t & = -\alpha \odot Z_t dt + \beta dW_t^1, \quad \beta \in \mathbb{R}^{4 \times 4}                          \\
    dS_t & = \Xi S_t dt + s_\sigma \text{diag}(S_t) dW_t^2 + S_t \odot dJ_t, \quad \Xi \in \mathbb{R}^{5 \times 5}
  \end{aligned}
\end{equation}
where $\odot$ denotes the Hadamard product (element-wise multiplication), $\alpha \in \mathbb{R}^{4}$ is a mean-reversion vector, $\beta$ represents volatility, and $\Xi$ captures price correlations and $W_t = (W^1_t,W^2_t)$ is a $9$-dimensional Brownian motion. The jump process affects only the price components and follows:
\begin{equation}
  J_t^i =
  \sum_{k=1}^{N_t^i} (e^{Y_k^i}-1), \quad N_t^i \sim \text{Poisson}(\lambda_{\text{Poisson}} t), \quad Y_k^i \sim \text{Exponential}(\lambda_{\text{exp}}), \quad i = 1,\dots,5.
\end{equation}

Now we define the payoff functions and switching costs. The plant manager operates in four distinct modes, each representing a specific allocation of capacity across different fuel sources. These modes are encoded as rows in a capacity matrix $C \in \mathbb{R}^{4 \times 3}$, where $C_{j,k}$ represents the capacity allocated to fuel type $k$ in mode $j$.
The payoff function $f_j: \mathbb{R}^9 \times [0,1] \to \mathbb{R}$ for mode $j$ is given by:
\begin{equation}
  f_j(t,X_t) = \min \left\{\sum_{k=1}^3 C_{j,k}A_t^k, D_t\right \} S^4_t - \sum_{k=1}^3 C_{j,k}(h_{\text{CO2},k}S^0_t + h_{\text{tech},k}S^k_t)
\end{equation}
where $C_{j,k}A_t^k$ represents the effective capacity for fuel type $k$ in mode $j$, adjusted by its availability factor. The first term calculates revenue as the minimum of total effective capacity and demand, multiplied by the electricity price. The second term represents operational costs, including both carbon dioxide emissions and technology-specific costs.

The cost of switching from mode $i$ to mode $j$ is defined by:
\begin{equation}
  c_{ij}(t,X_t) = \frac{1}{3} \sum_{k=1}^3 S^k_t \cdot \1_{C_{i,k} \neq C_{j,k}} + 0.001
\end{equation}
where $\1_{C_{i,k} \neq C_{j,k}}$ is the indicator function that equals 1 when the capacity allocation for fuel $k$ differs between modes $i$ and $j$, and 0 otherwise. This cost reflects the expenses of fuel replacement during mode transitions, proportional to current fuel prices, plus a small fixed cost.

The starting distribution is
\begin{equation}\label{eq:aid:x0}
  (70,1,1,0,20,60,40,20,120) + \epsilon, \quad \epsilon \sim \mathcal{N}(0,0.005^2I)
\end{equation}

The exact values of the capacity matrix $C$, and other parameters $h_{\text{CO2},k}$, $h_{\text{tech},k}$, $\beta$, $s_\text{long}$, $s_\alpha$, and $s_\sigma$ can be found in the code~\cite{code}, and the main experiment parameters are found in \cref{tab:aid_params}.
\begin{table}[ht]
  \centering
  \caption{Model parameters for the ACLP example}
  \label{tab:aid_params}
  \begin{tabular}{@{}llr@{}}
    \toprule
    Parameter                & Value                               & Description                                            \\
    \midrule
    $d$                      & 9                                   & Dimension of random process                            \\
    $N$                      & 90                                  & Number of time steps                                   \\
    $D$                      & 4                                   & Number of modes                                        \\
    $M$                      & 50000                               & Number of trajectories                                 \\
    $m_Y$                    & 1                                   & Number of one-step trajectories in training            \\
    $t_\text{start}$         & 0                                   & Start time                                             \\
    $t_\text{end}$           & 1                                   & End time                                               \\
    $\Delta t$               & $(t_\text{end} - t_\text{start})/N$ & Time step                                              \\
    $\lambda_\text{Poisson}$ & 12                                  & Average number of jumps in electricity prices per year \\
    $\lambda_\text{exp}$     & 15                                  & Rate parameter of exponential jump sizes               \\
    \bottomrule
  \end{tabular}
\end{table}

\subsection{Experiment 3: Banded Shift Process (BSP)}\label{sec:numerical:stapel}

The BSP experiment tests whether methods can learn forward-planning strategies that outperform more greedy approaches. We design a one-dimensional process where optimal modes form spatial bands that shift over time, requiring incremental transitions rather than immediate jumps to maximize long-term value.

The state process $X_t \in \mathbb{R}$ follows mean-reverting dynamics with time-varying mean:
\begin{equation}
  dX_t = -\alpha(X_t - m(t))dt + \sigma(X_t)dW_t
\end{equation}
where $W_t$ is a standard Brownian motion. The model components are defined as follows:
\begin{align}
  m(t)      & = \sin(2\pi t)   &  & \text{(time-varying mean)}          \\
  \alpha      & = 0.5            &  & \text{(mean reversion speed)}       \\
  \sigma(X_t) & = 0.5 + 0.2|X_t| &  & \text{(state-dependent volatility)}
\end{align}
The controller chooses between 10 operational modes in a setting designed to punish myopic decisions. We divide the state space into adjacent bands, where each mode achieves maximum payoff (1.0) in its designated band and reduced payoff (0.2) in neighboring bands.

Specifically, the state range $[-2, 2]$ is partitioned into bands of width 0.4, with mode $j$ optimal when $X_t$ falls in band $j$. Adjacent modes receive moderate payoffs, creating a local neighborhood structure. The payoff function $f_j : \mathbb{R} \times [0,1] \to \mathbb{R}$ is:
\begin{equation}
  f_j(t,X_t) =
  \begin{cases}
    1.0, & \text{if } j = \left\lfloor\frac{X_t + 2}{0.4}\right\rfloor + 1 \text{ and } -2 \leq X_t \leq 2                              \\
    0.2, & \text{if } j = \left\lfloor\frac{X_t + 2}{0.4}\right\rfloor \text{ or } j = \left\lfloor\frac{X_t + 2}{0.4}\right\rfloor + 2 \\
    0,   & \text{otherwise}
  \end{cases}
\end{equation}

Switching costs increase with mode distance but plateau, creating incentives for gradual transitions:
\begin{equation}
  c_{ij}(t,X_t) =
  \begin{cases}
    0,                   & \text{if } i = j    \\
    0.05 \min(|i-j|, 3), & \text{if } i \neq j
  \end{cases}
\end{equation}

The combination of banded payoffs and distance-dependent switching costs creates a challenging control problem that should be difficult for greedy algorithms. For instance, as $X_t$ changes, a greedy controller would immediately shift to the new highest payoff (minus switching cost) mode, whereas an optimal controller might make incremental transitions, temporarily accepting lower rewards to maximize cumulative profits. Furthermore, the time-varying mean and state-dependent volatility introduce additional complexities, requiring the controller to adapt and anticipate changing dynamics.
The initial distribution of $X_0$ is set as:
\begin{equation}\label{eq:stapel:x0}
  X_0 = 0 + \epsilon, \quad \epsilon \sim \mathcal{N}(0, 0.5^2)
\end{equation}

The experiment parameters are summarized in \cref{tab:bsp_params}.
\begin{table}[!hbtp]
  \centering
  \caption{Model parameters for the BSP experiment}
  \label{tab:bsp_params}
  \begin{tabular}{@{}llr@{}}
    \toprule
    Parameter        & Value                               & Description                                 \\
    \midrule
    $d$              & 1                                   & Dimension of random process                 \\
    $D$              & 10                                  & Number of modes                             \\
    $M$              & 20000                               & Number of trajectories                      \\
    $N$              & 36                                  & Number of time points                       \\
    $m_Y$            & 1                                   & Number of one-step trajectories in training \\
    $t_\text{start}$ & 0                                   & Start time                                  \\
    $t_\text{end}$   & 1                                   & End time                                    \\
    $\Delta t$       & $(t_\text{end} - t_\text{start})/N$ & Time step                                   \\
    \bottomrule
  \end{tabular}
\end{table}

\subsection{Experiment 4: High dimensional extension of Carmona-Ludkovski (HCL)}\label{sec:numerical:carmona_dim}

The HCL experiment tests method performance as state dimensions increase from 2 to 50, extending the CL power plant model~\cite{bayraktar2023neural} to higher-dimensional settings. This experiment isolates dimensional scaling effects by maintaining the same three-mode structure while adding correlated state variables.

The high-dimensional state process $X_t \in \mathbb{R}^d$ preserves the jump-diffusion structure of the original CL model, with electricity price jumps affecting only the first dimension and mean-reverting dynamics for all components:
\begin{equation}
  dX_t = b(X_t) dt + \sigma(X_t) dW_t + dJ_t,
\end{equation}
where $W_t \in \mathbb{R}^d$ is a standard $d$-dimensional Brownian motion, and $J_t$ is a jump process only affecting the first dimension (price of electricity). The drift and diffusion functions are given by
\begin{align}
  b(X_t)    & = \boldsymbol{\alpha} \odot X_t \odot (\log(X_0) - \log(X_t)), \\
  \sigma(X_t) & = \Sigma
\end{align}
where $\log$ is applied element-wise,  $\boldsymbol{\alpha} = (5, 2, 2, \ldots, 2) \in \mathbb{R}^d$, and $\Sigma$ is the constant matrix
\begin{equation}
  \Sigma =
  \begin{bmatrix}
    0.5    & 0      & 0      & \cdots & 0      \\
    0.32   & 0.24   & 0      & \cdots & 0      \\
    0.32   & 0      & 0.24   & \cdots & 0      \\
    \vdots & \vdots & \vdots & \ddots & \vdots \\
    0.32   & 0      & 0      & \cdots & 0.24
  \end{bmatrix}.
\end{equation}
There is thus state-dependent drift, and correlation between prices in this model.
The jump process $J_t$ is defined as:
\begin{equation}
  J_t = \sum_{i=1}^{N_t} (e^{Y_i} - 1)e_1, \quad N_t \sim \text{Poisson}(\lambda_\text{Poisson} t), \quad Y_i \sim \text{Exponential}(\lambda_\text{exp})
\end{equation}
where $e_1$ is the unit vector in the first dimension.

The power plant operation maintains the three modes in the original CL problem, each representing differing capacities of the power plant. The plant is off (mode 1), at half capacity (mode 2) or at full capacity (mode 3).
The payoff function $f_j : \mathbb{R}^d \times [0,0.25] \to \mathbb{R}$, $j=1,2,3$, for mode $j$ is:
\begin{equation}
  f_j(t,X_t) = a_j X_{1,t} + b_j \widebar{X}_{2:d,t} + c_j
\end{equation}
where $\widebar{X}_{2:d,t} = \frac{1}{d-1} \sum_{i=2}^d X_{i,t}$ is the mean of all dimensions except the first one. The constants $a_j$, $b_j$, and $c_j$ are defined as:
\begin{equation}
  \begin{bmatrix}
    a_1 & a_2 & a_3 \\
    b_1 & b_2 & b_3 \\
    c_1 & c_2 & c_3
  \end{bmatrix} =
  \begin{bmatrix}
    0  & 0.438  & 0.876 \\
    0  & -3.285 & -8.76 \\
    -1 & -1.1   & -1.2
  \end{bmatrix}.
\end{equation}
The cost of switching from mode $i$ to mode $j$ is:
\begin{equation}
  c_{ij}(t,X_t) =
  \begin{cases}
    0,     & \text{if } i = j    \\
    0.011, & \text{if } i \neq j
  \end{cases}
\end{equation}
The initial value $X_0$ is defined as:
\begin{equation}
  X_0 = (50,6,6,\dots,6)
\end{equation}

The experiment parameters can be seen in \cref{tab:carmona_dim_params}.
\begin{table}[ht]
  \centering
  \caption{Model parameters for the HCL experiment}
  \label{tab:carmona_dim_params}
  \begin{tabular}{@{}llr@{}}
    \toprule
    Parameter                & Value                               & Description                                 \\
    \midrule
    $d$                      & variable                            & Dimension of random process                 \\
    $D$                      & 3                                   & Number of modes                             \\
    $M$                      & 20000                               & Number of trajectories                      \\
    $N$                      & 180                                 & Number of time points                       \\
    $m_Y$                    & 1                                   & Number of one-step trajectories in training \\
    $t_\text{start}$         & 0                                   & Start time                                  \\
    $t_\text{end}$           & 0.25                                & End time                                    \\
    $\Delta t$               & $(t_\text{end} - t_\text{start})/N$ & Time step                                   \\
    $\lambda_\text{Poisson}$ & 32                                  & Poisson process intensity                   \\
    $\lambda_\text{exp}$     & 10                                  & Rate parameter of exponential jump sizes    \\
    \bottomrule
  \end{tabular}
\end{table}

\section{Results of numerical experiments}\label{sec:results}
\subsection{Model performance results}

We evaluate performance mainly using the three metrics: Decision quality $Q^\mathcal{R}$, value capture $\kappa^\mathcal{R}$, and internal consistency $C^\mathcal{R}$ (detailed in \cref{sec:metrics}).

\textbf{$k$-NN achieves robust near-optimal decisions across all problem types.} Despite its simplicity, $k$-NN consistently ranks among top performers for decision quality and value capture across all experiments. In higher dimensions, PCA-$k$-NN maintains this performance while more complex methods deteriorate significantly (\cref{fig:carmona_dim:results_2,fig:combined_results}). This robustness extends across very different problem structures, from simple to complex switching regimes.

\textbf{High-quality decisions without accurate value function approximation.} The high-dimensional Carmona-Ludkovski (HCL) experiment reveals a disconnect between prediction accuracy and decision quality. While $k$-NN achieves high quality decisions and almost full value capture, its internal consistency lags significantly behind linear models and random forests (\cref{fig:carmona_dim_results}).

\textbf{$k$-NN and LGBM accurately capture optimal decision regions.}
In the CL experiment, where the two-dimensional state space allows visualization of decision boundaries (\cref{fig:carmona_boundaries}), $k$-NN and LGBM produce boundaries that closely match the a posteriori optimal regions across time-steps. Neural networks, by contrast, exhibit unstable boundaries that shift substantially between time-steps and at times deviate far from the optimal regions.

\textbf{Training difficulty concentrates in the middle time horizon.} Loss curves (\cref{fig:loss_curves}) consistently show that regression becomes most challenging in intermediate time steps, with easier fitting at both initial conditions and terminal states.

\begin{figure}[!htbp]
  \centering

  % First row
  \begin{subfigure}[t]{0.48\textwidth}
    \centering
    \begin{tikzpicture}
      \begin{axis}[
          height=0.8\textwidth,
          width=\textwidth,
          ylabel={Value},
          xtick={1,2,3},
          xticklabels={$Q^\mathcal{R}$, $C^\mathcal{R}$, $\kappa^\mathcal{R}$},
          xmin=0.5, xmax=3.5,
          ymin=0.6, ymax=1.05,
          ytick={0.6,0.8,1.0},
          grid=both,
        ]

        % KNN strategy (blue)
        \addplot[blue, mark=*, thick] coordinates {
            (1, 0.94547) (2, 0.92753) (3, 0.9949)
          };

        % LGBM strategy (orange)
        \addplot[orange, mark=diamond*, thick] coordinates {
            (1, 0.94547) (2, 0.95386) (3, 0.9937)
          };

        % Network 1 strategy (green)
        \addplot[green!60!black, mark=triangle*, thick] coordinates {
            (1, 0.92193) (2, 0.97155) (3, 0.9852)
          };

        % Forest strategy (purple)
        \addplot[purple, mark=pentagon*, thick] coordinates {
            (1, 0.75486) (2, 0.92304) (3, 0.9056)
          };

        % Linear strategy (red)
        \addplot[red, mark=square*, thick] coordinates {
            (1, 0.74530) (2, 0.96006) (3, 0.9084)
          };

        % Greedy strategy (brown)
        \addplot[brown, mark=star, thick] coordinates {
            (1, 0.07895) (2, 1.00000) (3, 0.1960)
          };

      \end{axis}
    \end{tikzpicture}
    \caption{CL Experiment}
    \label{fig:carmona_results}
  \end{subfigure}
  \hfill
  \begin{subfigure}[t]{0.48\textwidth}
    \centering
    \begin{tikzpicture}
      \begin{axis}[
          height=0.8\textwidth,
          width=\textwidth,
          ylabel={Value},
          xtick={1,2,3},
          xticklabels={$Q^\mathcal{R}$, $C^\mathcal{R}$, $\kappa^\mathcal{R}$},
          xmin=0.5, xmax=3.5,
          ymin=0.6, ymax=1.05,
          ytick={0.6,0.8,1.0},
          grid=both,
        ]

        % PCA KNN strategy (blue)
        \addplot[blue, mark=*, thick] coordinates {
            (1, 0.9551) (2, 0.7782) (3, 0.9952)
          };

        % Linear strategy (red)
        \addplot[red, mark=square*, thick] coordinates {
            (1, 0.9031) (2, 0.9668) (3, 0.9800)
          };

        % Network 2 strategy (green!60!black, dashed)
        \addplot[green!60!black, mark=triangle*, thick, dashed] coordinates {
            (1, 0.9386) (2, 0.7006) (3, 0.9828)
          };

        % Forest strategy (purple)
        \addplot[purple, mark=pentagon*, thick] coordinates {
            (1, 0.8953) (2, 0.9815) (3, 0.9732)
          };

        % LGBM strategy (orange)
        \addplot[orange, mark=diamond*, thick] coordinates {
            (1, 0.8609) (2, 0.8114) (3, 0.8818)
          };

        % Greedy strategy (brown)
        \addplot[brown, mark=star, thick] coordinates {
            (1, 0.4289) (2, 1.0000) (3, 0.7976)
          };

      \end{axis}
    \end{tikzpicture}
    \caption{HCL Experiment ($d=50$)}
    \label{fig:carmona_dim_results}
  \end{subfigure}

  % Second row
  \vspace{1em}
  \begin{subfigure}[t]{0.48\textwidth}
    \centering
    \begin{tikzpicture}
      \begin{axis}[
          height=0.8\textwidth,
          width=\textwidth,
          ylabel={Value},
          xtick={1,2,3},
          xticklabels={$Q^\mathcal{R}$, $C^\mathcal{R}$, $\kappa^\mathcal{R}$},
          xmin=0.5, xmax=3.5,
          ymin=0.55, ymax=1.05,
          ytick={0.6,0.8,1.0},
          grid=both,
        ]

        % Lasso strategy (cyan)
        \addplot[cyan, mark=diamond*, thick] coordinates {
            (1, 0.64429) (2, 0.98019) (3, 0.9934)
          };

        % Linear strategy (red)
        \addplot[red, mark=square*, thick] coordinates {
            (1, 0.64824) (2, 0.98575) (3, 0.9934)
          };

        % KNN/PCA KNN strategy (blue)
        \addplot[blue, mark=*, thick] coordinates {
            (1, 0.68308) (2, 0.98140) (3, 0.9956)
          };

        % Greedy strategy (brown)
        \addplot[brown, mark=star, thick] coordinates {
            (1, 0.66066) (2, 1.00000) (3, 0.9930)
          };

        % Network 1 strategy (green)
        \addplot[green!60!black, mark=triangle*, thick] coordinates {
            (1, 0.52538) (2, 0.90423) (3, 0.9874)
          };

        % Forest strategy (purple)
        \addplot[purple, mark=pentagon*, thick] coordinates {
            (1, 0.56033) (2, 0.96927) (3, 0.9796)
          };

        % Ridge strategy (magenta)
        \addplot[magenta, mark=otimes*, thick] coordinates {
            (1, 0.67330) (2, 0.98750) (3, 0.9943)
          };

        % LGBM strategy (orange)
        \addplot[orange, mark=diamond*, thick] coordinates {
            (1, 0.63319) (2, 0.91798) (3, 0.9476)
          };

      \end{axis}
    \end{tikzpicture}
    \caption{ACLP Experiment}
    \label{fig:aid_results}
  \end{subfigure}
  \hfill
  \begin{subfigure}[t]{0.48\textwidth}
    \centering
    \begin{tikzpicture}
      \begin{axis}[
          height=0.8\textwidth,
          width=\textwidth,
          ylabel={Value},
          xtick={1,2,3},
          xticklabels={$Q^\mathcal{R}$,$C^{\mathcal{R}}$, $\kappa^\mathcal{R}$},
          xmin=0.5, xmax=3.5,
          ymin=0.4, ymax=1.05,
          ytick={0.4,0.6,0.8,1.0},
          grid=both,
        ]

        % KNN strategy (blue)
        \addplot[blue, mark=*, thick] coordinates {
            (1, 0.84944) (2, 0.97934) (3, 0.8721)
          };

        % Network 2 strategy (green, dashed)
        \addplot[green!60!black, mark=triangle*, thick, dashed] coordinates {
            (1, 0.78444) (2, 0.87785) (3, 0.8461)
          };

        % Forest strategy (purple)
        \addplot[purple, mark=pentagon*, thick] coordinates {
            (1, 0.77833) (2, 0.78945) (3, 0.8308)
          };

        % LGBM strategy (orange)
        \addplot[orange, mark=diamond*, thick] coordinates {
            (1, 0.84472) (2, 0.99762) (3, 0.8796)
          };

        % Linear strategy (red)
        \addplot[red, mark=square*, thick] coordinates {
            (1, 0.45139) (2, 0.79773) (3, 0.5468)
          };

        % Greedy strategy (brown)
        \addplot[brown, mark=star, thick] coordinates {
            (1, 0.00000) (2, 1.00000) (3, 0.0010)
          };

      \end{axis}
    \end{tikzpicture}
    \caption{BSP Experiment}
    \label{fig:stapel_results}
  \end{subfigure}

  % Single legend for the entire figure
  \vspace{1em}
  \centering
  \begin{tikzpicture}
    \begin{axis}[
        hide axis,
        scale only axis,
        height=0cm,
        width=0cm,
        legend style={draw=none, legend columns=5, legend cell align=left, font=\footnotesize},
        legend pos=outer north east,
      ]
      % Add plots with consistent colors/markers for the legend
      \addplot[blue, mark=*, thick] {0};
      \addlegendentry{$k$-NN/PCA $k$-NN}

      \addplot[orange, mark=diamond*, thick] {0};
      \addlegendentry{LGBM}

      \addplot[green!60!black, mark=triangle*, thick] {0};
      \addlegendentry{network 1}

      \addplot[green!60!black, mark=triangle*, thick, dashed] {0};
      \addlegendentry{network 2}

      \addplot[purple, mark=pentagon*, thick] {0};
      \addlegendentry{forest}

      \addplot[red, mark=square*, thick] {0};
      \addlegendentry{linear}

      \addplot[brown, mark=star, thick] {0};
      \addlegendentry{greedy}

      \addplot[cyan, mark=diamond*, thick] {0};
      \addlegendentry{lasso}

      \addplot[magenta, mark=otimes*, thick] {0};
      \addlegendentry{ridge}
    \end{axis}
  \end{tikzpicture}

  \caption{Performance metrics across different strategies for four experiments when starting in state $1$: (a) CL experiment (\cref{sec:numerical:carmona}), (b) High-dimensional CL experiment with $d=50$ (\cref{sec:numerical:carmona_dim}), (c) ACLP experiment (\cref{sec:numerical:aid}), and (d) BSP experiment (\cref{sec:numerical:stapel}). Starting values for paths were generated according to the same distributions as in the respective equations of $X_0$ for each experiment. For Internal Consistency and Value Capture, we have taken the mean at time $t_0$ over all paths. Total paths generated for each experiment was 1000, except for HCL with $d\in \{40,50\}$ where we used 200 paths. Among the linear models, we have only plotted the best performing one.}
  \label{fig:combined_results}
\end{figure}
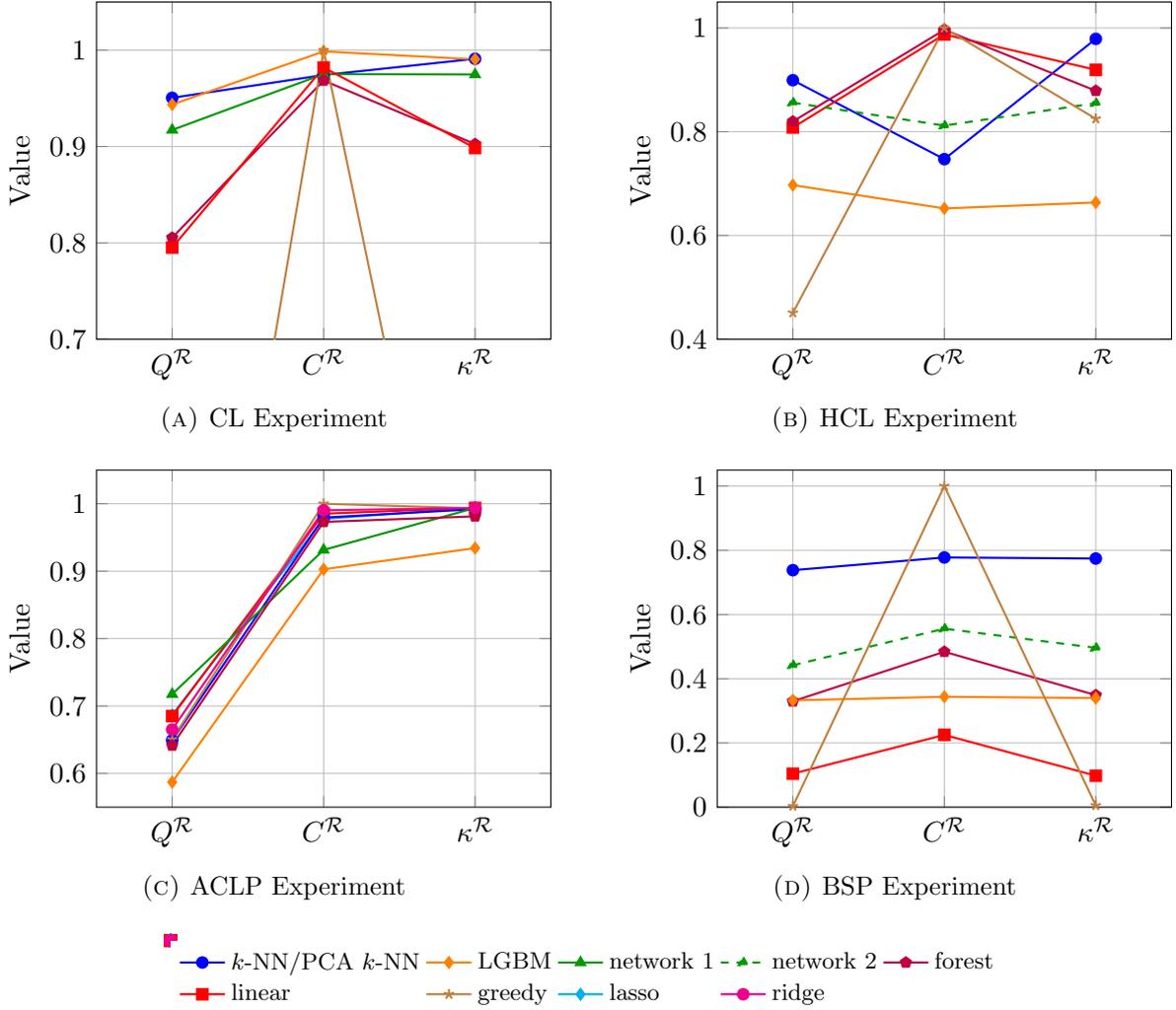

\begin{figure}[!htbp]
  \centering
  \begin{subfigure}[b]{0.45\textwidth}
    \begin{tikzpicture}
      \begin{axis}[
          width=0.9\textwidth,
          height=0.7\textwidth,
          xlabel={Dimension},
          ylabel={Final Value Capture},
          title={Final Value Capture by Dimension},
          grid=both,
          legend style={at={(0.5,-0.22)}, anchor=north, legend columns=3},
          xtick={2,10,20,30,40,50},
          xmin=0, xmax=55,
          ymin=0.8, ymax=1.05,
        ]

        % KNN strategy
        \addplot[blue, thick, mark=*] coordinates {
            (2, 0.99938) (10, 0.99520) (20, 0.99584) (30, 0.99483) (40, 0.99571) (50, 0.99520)
          };

        % Linear strategy
        \addplot[red, thick, mark=square*] coordinates {
            (2, 0.98402) (10, 0.98281) (20, 0.98455) (30, 0.97999) (40, 0.98501) (50, 0.98005)
          };

        % Network 2 strategy
        \addplot[green!60!black, thick, mark=triangle*] coordinates {
            (2, 0.93137) (10, 0.98500) (20, 0.95859) (30, 0.97965) (40, 0.99516) (50, 0.98278)
          };

        % Forest strategy
        \addplot[purple, thick, mark=pentagon*] coordinates {
            (2, 0.97014) (10, 0.97593) (20, 0.96897) (30, 0.96760) (40, 0.96551) (50, 0.97321)
          };

        % LGBM strategy
        \addplot[cyan!70!black, thick, mark=diamond*] coordinates {
            (2, 0.99582) (10, 0.94833) (20, 0.92250) (30, 0.90724) (40, 0.88657) (50, 0.88183)
          };

        % Greedy strategy
        \addplot[brown, thick, mark=star] coordinates {
            (2, 0.82513) (10, 0.81500) (20, 0.81083) (30, 0.83456) (40, 0.81716) (50, 0.79755)
          };

        % Ridge strategy
        \addplot[orange, thick, mark=otimes*] coordinates {
            (2, 0.98208) (10, 0.98084) (20, 0.98122) (30, 0.97828) (40, 0.98164) (50, 0.97907)
          };

        % Lasso strategy
        \addplot[magenta, thick, mark=diamond] coordinates {
            (2, 0.94929) (10, 0.94988) (20, 0.95203) (30, 0.94578) (40, 0.95072) (50, 0.95675)
          };

        %\legend{KNN, linear, network 2, forest, LGBM, greedy, ridge, lasso}
      \end{axis}
    \end{tikzpicture}
    \caption{Final Value Capture performance across different dimensions}
  \end{subfigure}
  \hfill
  \begin{subfigure}[b]{0.45\textwidth}
    \begin{tikzpicture}
      \begin{axis}[
          width=0.9\textwidth,
          height=0.7\textwidth,
          xlabel={Dimension},
          ylabel={Decision Quality},
          title={Decision Quality by Dimension},
          grid=both,
          legend style={at={(0.5,-0.22)}, anchor=north, legend columns=3},
          xtick={2,10,20,30,40,50},
          xmin=0, xmax=55,
          ymin=0.8, ymax=1.05,
        ]

        % KNN strategy
        \addplot[blue, thick, mark=*] coordinates {
            (2, 0.98523) (10, 0.95122) (20, 0.95786) (30, 0.94318) (40, 0.95050) (50, 0.95506)
          };

        % Linear strategy
        \addplot[red, thick, mark=square*] coordinates {
            (2, 0.90909) (10, 0.90438) (20, 0.91481) (30, 0.88552) (40, 0.90873) (50, 0.90307)
          };

        % Network 2 strategy
        \addplot[green!60!black, thick, mark=triangle*] coordinates {
            (2, 0.80760) (10, 0.91476) (20, 0.92724) (30, 0.91859) (40, 0.96218) (50, 0.93865)
          };

        % Forest strategy
        \addplot[purple, thick, mark=pentagon*] coordinates {
            (2, 0.92161) (10, 0.90023) (20, 0.88601) (30, 0.86428) (40, 0.86599) (50, 0.89533)
          };

        % LGBM strategy
        \addplot[cyan!70!black, thick, mark=diamond*] coordinates {
            (2, 0.97097) (10, 0.91362) (20, 0.90702) (30, 0.86981) (40, 0.85986) (50, 0.86088)
          };

        % Greedy strategy
        \addplot[brown, thick, mark=star] coordinates {
            (2, 0.46256) (10, 0.44110) (20, 0.43580) (30, 0.46928) (40, 0.44320) (50, 0.42892)
          };

        % Ridge strategy
        \addplot[orange, thick, mark=otimes*] coordinates {
            (2, 0.90324) (10, 0.89942) (20, 0.90656) (30, 0.88113) (40, 0.89961) (50, 0.90127)
          };

        % Lasso strategy
        \addplot[magenta, thick, mark=diamond] coordinates {
            (2, 0.84204) (10, 0.84081) (20, 0.85376) (30, 0.82199) (40, 0.84323) (50, 0.86323)
          };

        %\legend{KNN, linear, network 2, forest, LGBM, greedy, ridge, lasso}
      \end{axis}
    \end{tikzpicture}
    \caption{Decision quality across different dimensions}
    \label{fig:decision_similarity}
  \end{subfigure}
  \vspace{1em}  % Add vertical space between rows
  \begin{subfigure}[b]{0.45\textwidth}
    \begin{tikzpicture}
      \begin{axis}[
          width=0.9\textwidth,
          height=0.7\textwidth,
          xlabel={Dimension},
          ylabel={Internal consistency},
          title={Internal consistency by dimension},
          grid=both,
          legend style={at={(0.5,-0.3)},draw=none, anchor=north,legend cell align=left, legend columns=4},
          xtick={2,10,20,30,40,50},
          xmin=0, xmax=55,
          ymin=0.45, ymax=1.05,
        ]

        % KNN strategy
        \addplot[blue, thick, mark=*] coordinates {
            (2, 0.92814) (10, 0.75030) (20, 0.75775) (30, 0.73538) (40, 0.76250) (50, 0.77817)
          };

        % Linear strategy
        \addplot[red, thick, mark=square*] coordinates {
            (2, 0.95377) (10, 0.96973) (20, 0.98530) (30, 0.92451) (40, 0.98233) (50, 0.96678)
          };

        % Network 2 strategy
        \addplot[green!60!black, thick, mark=triangle*] coordinates {
            (2, 0.51471) (10, 0.79083) (20, 0.87362) (30, 0.75708) (40, 0.92697) (50, 0.70055)
          };

        % Forest strategy
        \addplot[purple, thick, mark=pentagon*] coordinates {
            (2, 0.95875) (10, 0.93225) (20, 0.94250) (30, 0.89802) (40, 0.95886) (50, 0.98151)
          };

        % LGBM strategy
        \addplot[cyan!70!black, thick, mark=diamond*] coordinates {
            (2, 0.98955) (10, 0.94895) (20, 0.89374) (30, 0.95130) (40, 0.82854) (50, 0.81141)
          };

        % Greedy strategy
        \addplot[brown, thick, mark=star] coordinates {
            (2, 1.00000) (10, 1.00000) (20, 1.00000) (30, 1.00000) (40, 1.00000) (50, 1.00000)
          };

        % Ridge strategy
        \addplot[orange, thick, mark=otimes*] coordinates {
            (2, 0.95352) (10, 0.95827) (20, 0.97504) (30, 0.92703) (40, 0.99319) (50, 0.97966)
          };

        % Lasso strategy
        \addplot[magenta, thick, mark=diamond] coordinates {
            (2, 0.92411) (10, 0.92580) (20, 0.94207) (30, 0.90186) (40, 0.95677) (50, 0.97450)
          };

        \legend{KNN, linear, network 2, forest, LGBM, greedy, ridge, lasso}
      \end{axis}
    \end{tikzpicture}
    \caption{Internal consistency across different dimensions. Values have been averaged over 1000 paths for each dimension.}
  \end{subfigure}
  \caption{This plot shows how our three measures scale with dimension in \cref{sec:numerical:carmona_dim} when starting in state $1$. Note that for $d>2$, $k$-NN is actually PCA-$k$-NN.}
  \label{fig:carmona_dim:results_2}
\end{figure}
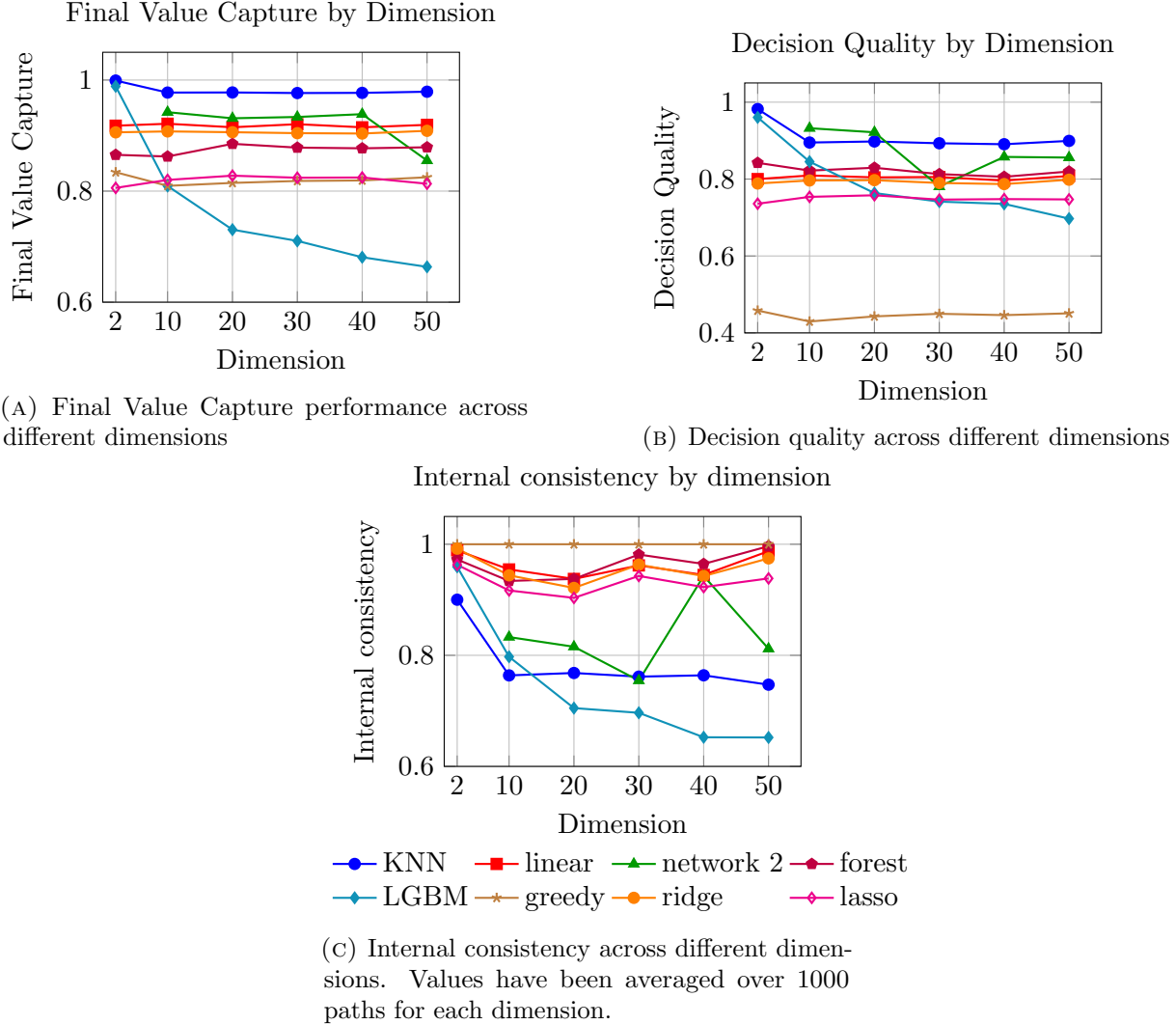
\begin{figure}[!htbp]
  \centering
  \begin{minipage}{0.48\textwidth}
    \centering
    \includegraphics[width=\textwidth]{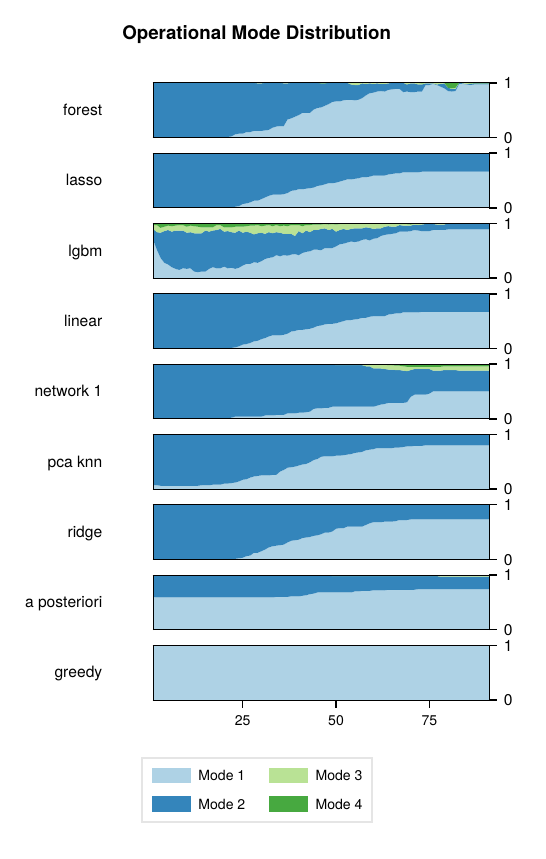}
    \subcaption{Comparison of switching strategies}
    \label{fig:aid-switching-strategies}
  \end{minipage}%
  \hfill
  \begin{minipage}{0.48\textwidth}
    \centering
    \includegraphics[width=\textwidth]{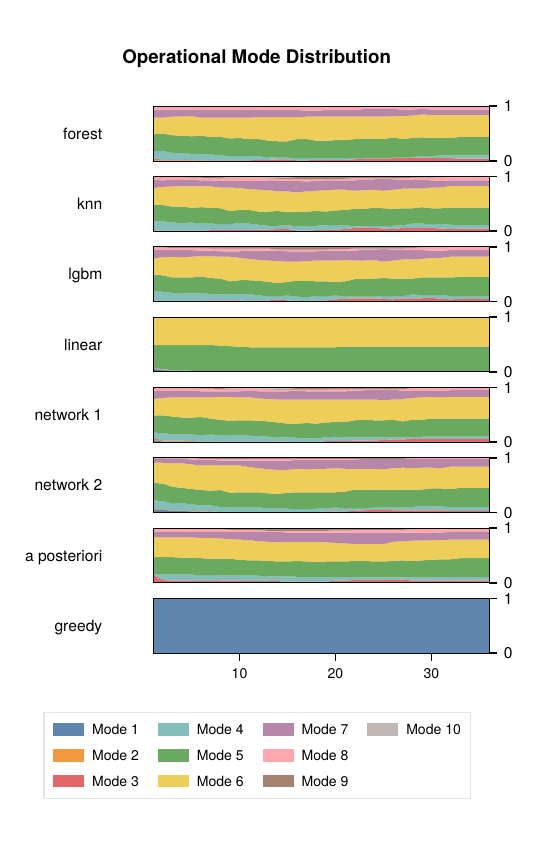}
    \subcaption{Comparison of switching strategies}
    \label{fig:stapel-switching-strategies}
  \end{minipage}
  \caption{Analysis of switching strategies for OS problems. We have simulated 1000 paths and applied the strategies defined by $\mu^{\mathcal{R}}$ for each path and model when starting in state $1$, and then calculated the distribution of strategies across modes over time. Left: ACLP performance. Right: BSP}
  \label{fig:switching-strategies-comparison}
\end{figure}

\begin{figure}[!htbp]
  \centering
  \begin{subfigure}[b]{0.45\textwidth}
    \centering
    \includegraphics[width=\textwidth]{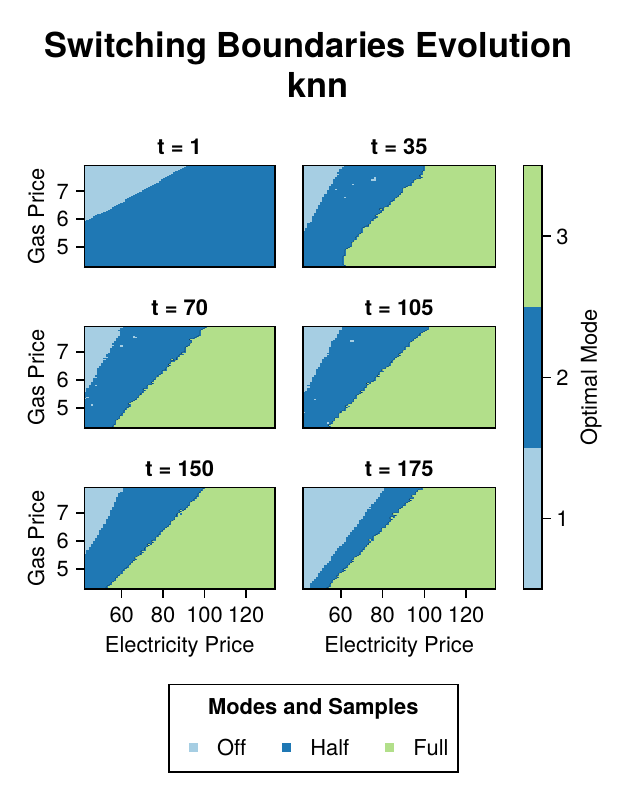}
    \caption{$k$-NN Switching Boundaries}
    \label{fig:carmona_KNN}
  \end{subfigure}
  \hfill
  \begin{subfigure}[b]{0.45\textwidth}
    \centering
    \includegraphics[width=\textwidth]{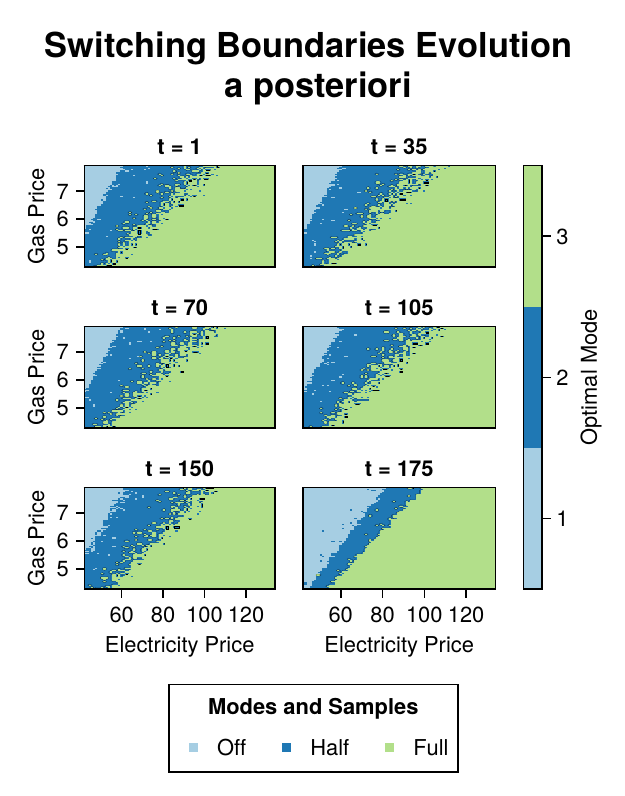}
    \caption{A Posteriori Switching Boundaries}
    \label{fig:carmona_aposteriori}
  \end{subfigure}
  \vskip\baselineskip
  \begin{subfigure}[b]{0.45\textwidth}
    \centering
    \includegraphics[width=\textwidth]{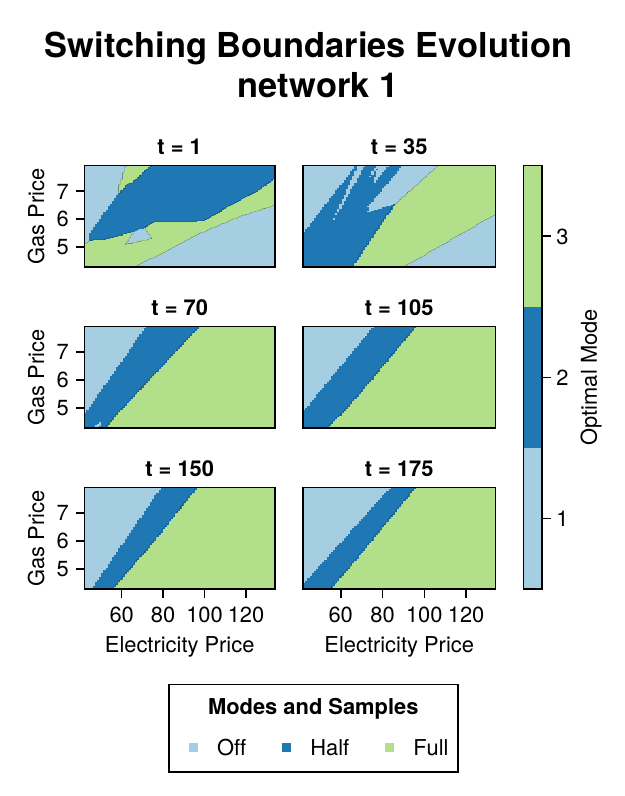}
    \caption{Network Switching Boundaries}
    \label{fig:carmona_network_1}
  \end{subfigure}
  \hfill
  \begin{subfigure}[b]{0.45\textwidth}
    \centering
    \includegraphics[width=\textwidth]{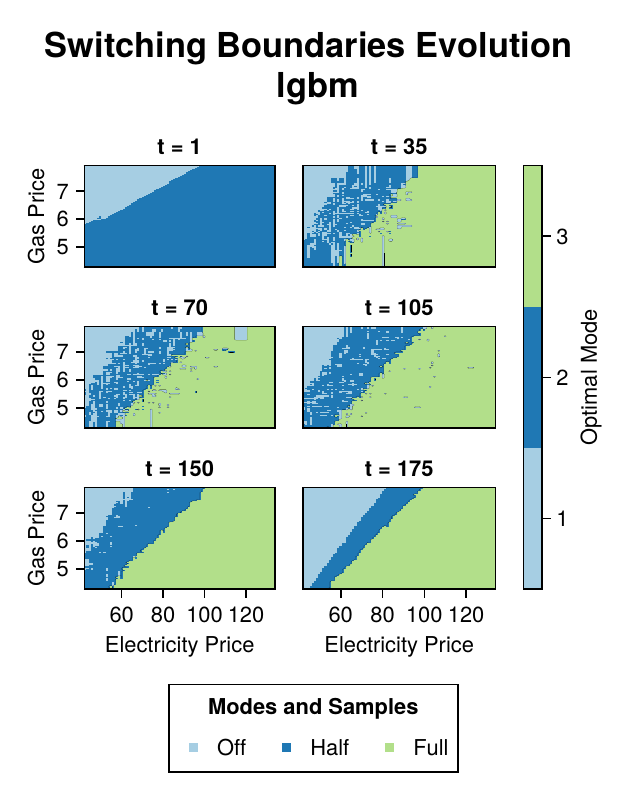}
    \caption{LGBM Switching Boundaries}
    \label{fig:carmona_lgbm}
  \end{subfigure}
  \caption{Evolution of Switching Boundaries Using Different Regression Methods. The plots show the decision boundaries at different time points, when the current mode is 1 (Off) for the CL example \cref{sec:numerical:carmona}. The black part shows in particular the boundary between mode 2 and mode 3.}
  \label{fig:carmona_boundaries}
\end{figure}

\begin{figure}[!htbp]
  \centering
  \begin{tikzpicture}
    \begin{axis}[
        width=0.8\textwidth,
        height=0.4\textwidth,
        xlabel={Time step},
        ylabel={Loss},
        grid=both,
        minor grid style={gray!25},
        major grid style={gray!50},
        legend pos=north east,
        legend style={font=\small},
        title={Training and Validation Loss},
        xmin=1, xmax=180,
        ymin=0, ymax=0.0025,
        y tick label style={/pgf/number format/fixed, /pgf/number format/precision=4}
      ]
      \addplot[blue, thick] table[x=time_step, y=avg_train_loss, col sep=comma] {loss_curve.csv};
      \addlegendentry{Training Loss}
      \addplot[red, thick, dashed] table[x=time_step, y=avg_val_loss, col sep=comma] {loss_curve.csv};
      \addlegendentry{Validation Loss}
    \end{axis}
  \end{tikzpicture}
  \caption{Training and validation loss curves for HCL ($d=40$) of the model PCA-$k$-NN. We see the average training and validation loss over modes $j=1,2,3$, from time-step 1 to 180. We can see that both losses at the start of training, rightmost of figure, begin by rising. Then around time-step 140, losses start to decline.}
  \label{fig:loss_curves}
\end{figure}
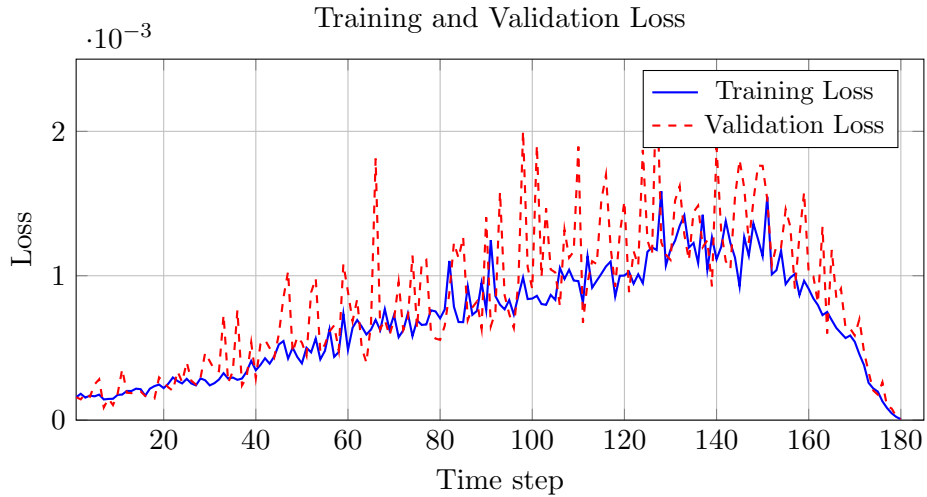

\section{Discussion and conclusion}\label{sec:discussion}
\subsection{Interpretation of numerical results}
The regression models that performed well identified near-optimal decision boundaries and mode-switching behavior. However, for $k$-NN in higher dimensions this boundary detection success came with a systematic prediction bias problem. One explanation is that $k$-NN's local averaging creates constant extrapolation in undersampled regions, failing to capture trends as states approach underexplored regions of state-space.

Decision boundary stability varied between methods. Despite testing several different feedforward neural network architectures (different depths, widths, activations, and regularization schemes), the networks exhibited instability both with regard to hyperparameter changes and across time steps, with decision boundaries shifting significantly between time-steps (\cref{fig:carmona_boundaries}). Most other models demonstrated stable training behavior, producing similar results under small hyperparameter variations, an important advantage when training hundreds of models in the backward induction process.

The loss curves (\cref{fig:loss_curves}) give one potential reason why no single method dominates: regression difficulty changes systematically over time. Because terminal conditions are simple (just equal to 0 in our examples), the first regression problems are easy to solve. At early time-steps in our random processes, the state domain is concentrated, which also makes regression easier. This domain then generally expands due to diffusion, specifically the term $p^c_r$ (in \cref{thm:1,thm:2,thm:3}) increases with time for a fixed $r$. Specifically, in the Longstaff-Schwartz backward induction, each regression inherits the complexity of previous approximations. Methods with high variance or training instability can create erratic continuation value estimates, which may explain why $k$-NN and its stable local averaging maintains better decision boundaries through the backward induction.
\subsection{Theoretical guarantees for \textit{k}-NN Regression}
Our theoretical analysis provides context for the strong empirical performance of $k$-NN. The concentration bounds in \cref{thm:1,thm:2,thm:3} establish that $k$-NN predictions concentrate around the target $\widetilde V_i$ at each regression step. 

\subsection{Limitations}\label{sec:limitations}
Our evaluation used minimal hyperparameter tuning to assess out-of-the-box performance of classical regression models. While systematic optimization might alter relative rankings, this reflects realistic constraints for practitioners who cannot afford extensive tuning.

Our high-dimensional experiments relied on PCA for dimensionality reduction. This choice was not compared against alternatives such as kernel PCA, partial least squares, random projections, or autoencoders, limiting conclusions about the optimal preprocessing approach. However, the strong performance of PCA-$k$-NN suggests that dimensionality reduction can make $k$-NN competitive even when raw $k$-NN performs poorly in high dimension. We did not systematically test PCA with other regression methods, because the other methods either performed well without it, or our goal was to test the inherent dimensionality handling of the model, such as implicit feature selection in tree-based methods.

Further, our neural network experiments were limited to feedforward architectures. Other network structures may be better suited to this class of problems.

\subsection{Implications}\label{sec:implications}
Our results demonstrate that simple, classical methods often suffice for OS problems. In our experiments, $k$-NN, linear models, and tree-based approaches collectively achieved near-optimal performance with greater robustness and fewer training failures than feedforward neural networks. This suggests that practitioners would gain by using classical regression methods before exploring deep learning approaches, which potentially offer minimal performance gains at significant computational and tuning costs.

\subsection{Future work}
Our theoretical analysis opens several promising directions for future research. First, extending our single-step concentration bounds to characterize error propagation through the full backward induction, bounding $|\widehat{V}_i - V_i|$, is a natural next step. Second, our concentration bound requires sub-Gaussian or sub-exponential processes, which limits applicability to jump processes with polynomial tails. Extending the bound to these cases would broaden the scope of the method. Third, while our main theorem requires large $m_Y$ for concentration, our experiments suggest $m_Y = 1$ is sufficient in practice, indicating that the bound likely could be tightened.

The prediction discrepancy we observed with $k$-NN suggests exploring adaptive approaches. Since regression difficulty varies over time steps (see \cref{fig:loss_curves}), hyperparameters could be dynamically adjusted, or different models could be used entirely at different time steps. Our preliminary tests with $k$-NN/linear model mixtures showed promise: good decision boundaries from $k$-NN together with better extrapolation from linear components.

Our approach treats each (time, mode)-pair as an independent regression problem, with only warm-start initialization from previous modes providing continuity in network training. Neural networks could exploit structural relationships more extensively. Specifically, making time an explicit network input, using recurrent architectures designed for sequential data, or jointly learning value functions and their gradients as in \cite{hure2020deep,bayraktar2023neural} could improve stability and performance. In addition, directly estimating decision boundaries rather than value functions, as in \cite{reppen2025neural} for optimal stopping, could also reduce the number of regressions, and potentially improve performance.

\section*{Acknowledgments}
The first author was supported by the Wallenberg AI, Autonomous
Systems and Software Program (WASP) funded by the Knut and Alice Wallenberg Foundation.
The second author was partially supported by the Swedish Research Council grant dnr: 2019--04098.

We acknowledge the use of AI tools to edit and polish the final text in the preparation of this manuscript.
% switched to biblatex; print bibliography below
\printbibliography

@article{HKR21,
	author = {Herrera, C. and Krach, F. and Ruyssen, P. and Teichmann, J.},
	journal = {Frontiers of Mathematical Finance},
	number = {1},
	pages = {31--77},
	title = {Optimal stopping via randomized neural networks},
	volume = {3},
	year = {2024}}

@article{BH21,
	author = {Bayer, C. and Hager, P. and Riedel, S. and Schoenmakers, J.},
	journal = {The Annals of Applied Probability},
	number = {1},
	pages = {238--273},
	publisher = {Institute of Mathematical Statistics},
	title = {Optimal stopping with signatures},
	volume = {33},
	year = {2023}}

@misc{code,
	author = {M. Andersson},
	howpublished = {\url{https://github.com/MartinDAndersson/optimal-switching-regression}},
	note = {Commit \texttt{ae2c34a}},
	title = {Numerical methods for optimal switching problems},
	year = {2026}}

@book{boucheron2013concentration,
	author = {Boucheron, S. and Lugosi, G. and Massart, P.},
	isbn = {978-0-19-953525-5},
	note = {A nonasymptotic theory of independence, With a foreword by Michel Ledoux},
	pages = {x+481},
	publisher = {Oxford University Press, Oxford},
	title = {Concentration inequalities},
	year = {2013}}

@incollection{ledoux2006concentration,
	author = {Ledoux, M.},
	booktitle = {Seminaire de probabilites XXXIII},
	pages = {120--216},
	publisher = {Springer},
	title = {Concentration of measure and logarithmic Sobolev inequalities},
	year = {2006}}

@article{barkhudaryan2022system,
	author = {Barkhudaryan, R and Gomes, D. and Shahgholian, H. and Salehi, M.},
	journal = {Applicable Analysis},
	number = {2},
	pages = {605--628},
	publisher = {Taylor \& Francis},
	title = {System of variational inequalities with interconnected obstacles},
	volume = {101},
	year = {2022}}

@article{biswas2010viscosity,
	author = {Biswas, I. and Jakobsen, E. and Karlsen, K.},
	journal = {Applied mathematics and optimization},
	pages = {47--80},
	publisher = {Springer},
	title = {Viscosity solutions for a system of integro-PDEs and connections to optimal switching and control of jump-diffusion processes},
	volume = {62},
	year = {2010}}

@article{djehiche2009finite,
	author = {Djehiche, B. and Hamadene, S. and Popier, A.},
	journal = {SIAM Journal on Control and Optimization},
	number = {4},
	pages = {2751--2770},
	publisher = {SIAM},
	title = {A finite horizon optimal multiple switching problem},
	volume = {48},
	year = {2009}}

@article{el2017viscosity,
	author = {El Asri, B. and Fakhouri, I.},
	journal = {IMA Journal of Mathematical Control and Information},
	number = {3},
	pages = {937--960},
	publisher = {Oxford University Press},
	title = {Viscosity solutions for a system of PDEs and optimal switching},
	volume = {34},
	year = {2017}}

@article{el2009finite,
	author = {El Asri, B. and Hamadene, S.},
	journal = {Applied Mathematics and Optimization},
	pages = {213--235},
	publisher = {Springer},
	title = {The finite horizon optimal multi-modes switching problem: the viscosity solution approach},
	volume = {60},
	year = {2009}}

@article{hu2010multi,
	author = {Hu, Y. and Tang, S.},
	journal = {Probability theory and related fields},
	pages = {89--121},
	publisher = {Springer},
	title = {Multi-dimensional BSDE with oblique reflection and optimal switching},
	volume = {147},
	year = {2010}}

@article{lundstrom2014systems,
	author = {Lundstr{\"o}m, N. LP and Nystr{\"o}m, K. and Olofsson, M.},
	journal = {Manuscripta mathematica},
	pages = {407--432},
	publisher = {Springer},
	title = {Systems of variational inequalities for non-local operators related to optimal switching problems: existence and uniqueness},
	volume = {145},
	year = {2014}}

@article{lundstrom2014systems2,
	author = {Lundstr{\"o}m, N. LP and Nystr{\"o}m, K. and Olofsson, M.},
	journal = {Annali di Matematica Pura ed Applicata (1923-)},
	number = {4},
	pages = {1213--1247},
	publisher = {Springer},
	title = {Systems of variational inequalities in the context of optimal switching problems and operators of Kolmogorov type},
	volume = {193},
	year = {2014}}

@article{lundstrom2022systems,
	author = {Lundstr{\"o}m, N. LP and Olofsson, M.},
	journal = {Applied Mathematics \& Optimization},
	number = {2},
	pages = {27},
	publisher = {Springer},
	title = {Systems of fully nonlinear parabolic obstacle problems with Neumann boundary conditions},
	volume = {86},
	year = {2022}}

@article{martyr2016dynamic,
	author = {Martyr, R.},
	journal = {Advances in Applied Probability},
	number = {3},
	pages = {832--847},
	publisher = {Cambridge University Press},
	title = {Dynamic programming for discrete-time finite-horizon optimal switching problems with negative switching costs},
	volume = {48},
	year = {2016}}

@article{perninge2018limited,
	author = {Perninge, M.},
	journal = {Mathematical Methods of Operations Research},
	number = {3},
	pages = {347--382},
	publisher = {Springer},
	title = {A limited-feedback approximation scheme for optimal switching problems with execution delays},
	volume = {87},
	year = {2018}}

@article{perninge2020finite,
	author = {Perninge, M.},
	journal = {Mathematical Methods of Operations Research},
	number = {3},
	pages = {465--500},
	publisher = {Springer},
	title = {A finite horizon optimal switching problem with memory and application to controlled SDDEs},
	volume = {91},
	year = {2020}}

@article{perninge2021finite,
	author = {Perninge, M.},
	journal = {Applied Mathematics \& Optimization},
	pages = {355--397},
	publisher = {Springer},
	title = {On the finite horizon optimal switching problem with random lag},
	volume = {84},
	year = {2021}}

@misc{gnoatto2022deep,
	author = {Gnoatto, A. and Patacca, M. and Picarelli, A.},
	title = {A deep solver for BSDEs with jumps},
	year = {2022}}

@misc{pere2023hill,
	author = {Pere, J. and Avelin, B. and Garino, V. and Ilmonen, P. and Viitasaari, L.},
	title = {On the Impact of Approximation Errors on Extreme Quantile Estimation with Applications to Functional Data Analysis},
	year = {2024}}

@article{tang1993finite,
	author = {Tang, S. and Yong, J.},
	journal = {Stochastics: An International Journal of Probability and Stochastic Processes},
	number = {3-4},
	pages = {145--176},
	publisher = {Taylor \& Francis},
	title = {Finite horizon stochastic optimal switching and impulse controls with a viscosity solution approach},
	volume = {45},
	year = {1993}}

@article{kohatsu2022density,
	author = {Kohatsu-Higa, A. and Nualart, E. and Tran, N.},
	journal = {Applied Mathematics and Computation},
	pages = {126814},
	publisher = {Elsevier},
	title = {Density estimates for jump diffusion processes},
	volume = {420},
	year = {2022}}

@article{carmona2008pricing,
	author = {Carmona, R. and Ludkovski, M.},
	journal = {Applied Mathematical Finance},
	number = {5-6},
	pages = {405--447},
	publisher = {Taylor \& Francis},
	title = {Pricing asset scheduling flexibility using optimal switching},
	volume = {15},
	year = {2008}}

@article{ACLP12,
	author = {A{\"i}d, R. and Campi, L. and Langren{\'e}, N. and Pham, H.},
	journal = {SIAM Journal on Financial Mathematics},
	pages = {191--231},
	title = {A probabilistic numerical method for optimal multiple switching problems in high Dimension},
	volume = {5},
	year = {2014}}

@article{BCJ19,
	author = {Becker, S. and Cheridito, P. and Jentzen, A.},
	journal = {Journal of Machine Learning},
	pages = {1--25},
	title = {Deep Optimal Stopping},
	volume = {20},
	year = {2019}}

@article{BCJ21,
	author = {Becker, S. and Cheridito, P. and Jentzen, A.},
	journal = {European Journal of Applied Mathematics},
	pages = {470--514},
	title = {Solving high-dimensional optimal stopping problems using deep learning},
	volume = {32},
	year = {2021}}

@article{BG97,
	author = {Broadie, M. and Glasserman, P.},
	journal = {Journal of Economic Dynamics \& Control},
	pages = {1323--1352},
	title = {Pricing American-style securities using simulation},
	volume = {21},
	year = {1997}}

@article{BO94,
	author = {Brekke, K. A. and {{\O}}ksendal, B.},
	journal = {SIAM Journal on Control and Optimization},
	pages = {1021--1036},
	title = {Optimal switching in an economic activity under uncertainty},
	volume = {32},
	year = {1994}}

@article{BS85,
	author = {Brennan, M. J. and Schwartz, E. S.},
	journal = {Journal of Business},
	number = {2},
	pages = {135--157},
	title = {Evaluating natural resource investments},
	volume = {58},
	year = {1985}}

@article{C96,
	author = {Carriere, J.F.},
	journal = {Insurance: mathematics and Economics},
	pages = {19--30},
	title = {Valuation of the early-exercise price for options using simulations and nonparametric regression},
	volume = {19},
	year = {1996}}

@article{CL10,
	author = {Carmona, R. and Ludkovski, M.},
	journal = {Quantitative Finance},
	pages = {359--374},
	title = {Valuation of energy storage: An optimal switching approach},
	volume = {10},
	year = {2010}}

@article{CIL92,
	author = {Crandall, M.G. and Ishii, H. and Lions, P-L.},
	journal = {Bulletin of the American Mathematical Society},
	pages = {1-67},
	title = {User's guide to viscosity solutions of second-order partial differential equations},
	volume = {27},
	year = {1992}}

@misc{EM21,
	author = {Ery, J. and Michel, L.},
	title = {Solving optimal stopping problems with Deep Q-Learning},
	year = {2021}}

@misc{FD21,
	author = {Fathan, A. and Delage, E.},
	title = {Deep Reinforcement Learning for Optimal Stopping with Application in Financial Engineering},
	year = {2021}}

@article{H20,
	author = {Hu, R.},
	journal = {Quantitative Finance},
	pages = {1567--1581},
	title = {Deep learning for ranking response surfaces with applications to optimal stopping problems},
	volume = {20},
	year = {2020}}

@article{EPQ97,
	author = {El-Karoui, N. and Peng, S. and Quenez, M.C.},
	journal = {Mathematical finance},
	pages = {1--71},
	title = {Backward stochastic differential equations in finance},
	volume = {7},
	year = {1997}}

@article{HK04,
	author = {Haugh, M.B. and Kogan, L.},
	journal = {Operations Research},
	pages = {258--270},
	title = {Pricing American options: a duality approach},
	volume = {52},
	year = {2004}}

@article{HMM19,
	author = {Hamadene, S. and Martyr, R. and Moriarty, J.},
	journal = {Advances in Applied Probability},
	pages = {425--442},
	title = {A probabilistic verification theorem for the finite horizon two-player zero-sum optimal switching games in continuous time},
	volume = {51},
	year = {2019}}

@article{HM13,
	author = {Hamadene, S. and Morlais, M.A.},
	journal = {Applied Mathematics \& Optimization},
	pages = {163--196},
	title = {Viscosity Solutions of Systems of PDEs with Interconnected Obstacles and Switching Problem},
	volume = {67},
	year = {2013}}

@article{HJW18,
	author = {Han, J. and Jentzen, A. and Weinan, E.},
	journal = {Proceedings of the National Academy of Sciences},
	pages = {8505--8510},
	title = {Solving high-dimensional partial differential equations using deep learning},
	volume = {115},
	year = {2018}}

@article{LS01,
	author = {Longstaff, F.A. and Schwartz, E.S.},
	journal = {The Review of Financial Studies},
	pages = {113--147},
	title = {Valuing American options by simulation: a simple least-squares approach},
	volume = {14},
	year = {2001}}

@article{TVR99,
	author = {Tsitsiklis, J.N. and Van Roy, B.},
	journal = {IEEE Transactions on Automatic Control},
	pages = {1840--1851},
	title = {Optimal stopping of Markov processes: Hilbert space theory, approximation algorithms, and an application to pricing high-dimensional financial derivatives},
	volume = {44},
	year = {1999}}

@article{PP90,
	author = {Pardoux, E. and Peng, S.},
	journal = {Systems \& Control Letters},
	pages = {55-61},
	title = {Adapted solutions of a backward stochastic differential equation},
	volume = {14},
	year = {1990}}

@article{PP92,
	author = {Pardoux, E. and Peng, S.},
	journal = {Stochastic partial differential equations and their applications},
	pages = {200--217},
	title = {Backward stochastic differential equations and quasilinear parabolic differential equations},
	year = {1992}}

@article{LNO15,
	author = {Li, K. and Nystr\"{o}m, K. and Olofsson, M.},
	journal = {Monte Carlo Methods and Applications},
	pages = {91--120},
	title = {Optimal switching problems under partial information},
	volume = {21},
	year = {2015}}

@article{O18,
	author = {Olofsson, M.},
	journal = {Electronic Communications in Probability},
	pages = {1--12},
	title = {A Brownian optimal switching problem under incomplete information},
	volume = {23},
	year = {2018}}

@article{K16,
	author = {Kharroubi, I.},
	journal = {SIAM Journal on Control and Optimization},
	pages = {2202--2233},
	title = {Optimal Switching in Finite Horizon under State Constraints},
	volume = {54},
	year = {2016}}

@article{LOO21,
	author = {Lundstr\"{o}m, N.L.P. and Olofsson, M. and \"Onskog, T.},
	journal = {Optimization and Engineering},
	pages = {1--25},
	title = {Management strategies for run-of-river hydropower plants: an optimal switching approach},
	year = {2021}}

@book{PS06,
	author = {Peskir, G. and Shiryaev, A.},
	publisher = {Birkh\"{a}user, Basel},
	title = {Optimal stopping and free-boundary problems},
	year = {2006}}

@article{T93,
	author = {Tilley, J.A.},
	journal = {Transaction of the Society of Actuaries},
	title = {Valuing American options in a path simulation model},
	year = {1993}}

@article{R02,
	author = {Rogers, L.C.},
	journal = {Mathematical Finance},
	pages = {271--286},
	title = {Monte Carlo valuation of American options},
	volume = {12},
	year = {2002}}

@article{K10,
	author = {Kohler, M. and Krzyzak, A. and Todorovic, N.},
	journal = {Mathematical Finance},
	pages = {383--410},
	title = {Pricing of high-dimensional American options by neural networks},
	volume = {20},
	year = {2010}}

@article{bayraktar2023neural,
	author = {Bayraktar, E. and Cohen, A. and Nellis, A.},
	journal = {SIAM Journal on Financial Mathematics},
	number = {4},
	pages = {1028--1061},
	publisher = {SIAM},
	title = {A neural network approach to high-dimensional optimal switching problems with jumps in energy markets},
	volume = {14},
	year = {2023}}

@misc{simplechains2023,
	author = {PumasAI},
	title = {SimpleChains.jl: Simple and Fast Neural Networks on CPU},
	url = {https://github.com/PumasAI/SimpleChains.jl},
	version = {0.4.1},
	year = {2023},
	bdsk-url-1 = {https://github.com/PumasAI/SimpleChains.jl}}

@misc{pal2023lux,
	author = {Pal, A.},
	month = {April},
	note = {If you use this software, please cite it as below.},
	publisher = {Zenodo},
	title = {{Lux: Explicit Parameterization of Deep Neural Networks in Julia}},
	url = {https://doi.org/10.5281/zenodo.7808904},
	version = {v0.5.0},
	year = 2023,
	bdsk-url-1 = {https://doi.org/10.5281/zenodo.7808904}}

@article{bezanson2017julia,
	author = {Bezanson, J. and Edelman, A. and Karpinski, S. and Shah, V.},
	journal = {SIAM review},
	number = {1},
	pages = {65--98},
	publisher = {SIAM},
	title = {Julia: A fresh approach to numerical computing},
	volume = {59},
	year = {2017}}

@article{Blaom2020,
	author = {A.D. Blaom and F. Kiraly and T. Lienart and Y. Simillides and D. Arenas and S.J. Vollmer},
	journal = {Journal of Open Source Software},
	number = {55},
	pages = {2704},
	publisher = {The Open Journal},
	title = {{MLJ}: A Julia package for composable machine learning},
	url = {https://doi.org/10.21105/joss.02704},
	volume = {5},
	year = {2020},
	bdsk-url-1 = {https://doi.org/10.21105/joss.02704}}

@misc{elsayed2021we,
	author = {Elsayed, S. and Thyssens, D. and Rashed, A. and Jomaa, H. and Schmidt-Thieme, L.},
	title = {Do we really need deep learning models for time series forecasting?},
	year = {2021}}

@inproceedings{ferrari2019we,
	author = {Ferrari Dacrema, M. and Cremonesi, P. and Jannach, D.},
	booktitle = {Proceedings of the 13th ACM conference on recommender systems},
	pages = {101--109},
	title = {Are we really making much progress? A worrying analysis of recent neural recommendation approaches},
	year = {2019}}

@article{friedman2001greedy,
	author = {Friedman, J.H},
	journal = {Annals of statistics},
	pages = {1189--1232},
	publisher = {JSTOR},
	title = {Greedy function approximation: a gradient boosting machine},
	year = {2001}}

@article{fernandez2014we,
	author = {Fern{\'a}ndez-Delgado, M. and Cernadas, E. and Barro, S. and Amorim, D.},
	journal = {The journal of machine learning research},
	number = {1},
	pages = {3133--3181},
	publisher = {JMLR. org},
	title = {Do we need hundreds of classifiers to solve real world classification problems?},
	volume = {15},
	year = {2014}}

@article{breiman2001random,
	author = {Breiman, L.},
	journal = {Machine learning},
	pages = {5--32},
	publisher = {Springer},
	title = {Random forests},
	volume = {45},
	year = {2001}}

@article{cover1967nearest,
	author = {Cover, T. and Hart, P.},
	journal = {IEEE transactions on information theory},
	number = {1},
	pages = {21--27},
	publisher = {IEEE},
	title = {Nearest neighbor pattern classification},
	volume = {13},
	year = {1967}}

@article{tibshirani1996regression,
	author = {Tibshirani, R.},
	journal = {Journal of the Royal Statistical Society Series B: Statistical Methodology},
	number = {1},
	pages = {267--288},
	publisher = {Oxford University Press},
	title = {Regression shrinkage and selection via the lasso},
	volume = {58},
	year = {1996}}

@article{hoerl1970ridge,
	author = {Hoerl, A.E. and Kennard, R.W.},
	journal = {Technometrics},
	number = {1},
	pages = {55--67},
	publisher = {Taylor \& Francis},
	title = {Ridge regression: Biased estimation for nonorthogonal problems},
	volume = {12},
	year = {1970}}

@article{ke2017lightgbm,
	author = {Ke, G. and Meng, Q. and Finley, T. and Wang, T. and Chen, W. and Ma, W. and Ye, Q. and Liu, T.},
	journal = {Advances in neural information processing systems},
	title = {Lightgbm: A highly efficient gradient boosting decision tree},
	volume = {30},
	year = {2017}}

@misc{ben_sadeghi_2022_7359268,
	author = {B. Sadeghi and P. Chiarawongse and K. Squire and D.C. Jones and A. Noack and C. St-Jean and R. Huijzer and R. Sch{\"a}tzle and I. Butterworth and Y. Peng and A. Blaom},
	month = nov,
	publisher = {Zenodo},
	title = {{DecisionTree.jl - A Julia implementation of the CART Decision Tree and Random Forest algorithms}},
	url = {https://doi.org/10.5281/zenodo.7359268},
	version = {0.11.3},
	year = 2022,
	bdsk-url-1 = {https://doi.org/10.5281/zenodo.7359268}}

@misc{lightgbm_github,
	author = {{Microsoft Corporation}},
	note = {GitHub repository},
	title = {LightGBM},
	url = {https://github.com/microsoft/LightGBM},
	year = {2016},
	bdsk-url-1 = {https://github.com/microsoft/LightGBM}}

@article{sheridan2016extreme,
	author = {Sheridan, R.P and Wang, W. and Liaw, A. and Ma, J. and Gifford, E.},
	journal = {Journal of chemical information and modeling},
	number = {12},
	pages = {2353--2360},
	publisher = {ACS Publications},
	title = {Extreme gradient boosting as a method for quantitative structure--activity relationships},
	volume = {56},
	year = {2016}}

@misc{han2016deep,
  author  = {Han, Jiequn and E, Weinan},
  title   = {Deep Learning Approximation for Stochastic Control Problems},
  year    = {2016},
  url     = {https://arxiv.org/abs/1611.07422}
}

@article{sirignano2018dgm,
	author = {Sirignano, Justin and Spiliopoulos, Konstantinos},
	journal = {Journal of computational physics},
	pages = {1339--1364},
	publisher = {Elsevier},
	title = {DGM: A deep learning algorithm for solving partial differential equations},
	volume = {375},
	year = {2018}}

@article{hure2020deep,
	author = {Hur{\'e}, C{\^o}me and Pham, Huy{\^e}n and Warin, Xavier},
	journal = {Mathematics of Computation},
	number = {324},
	pages = {1547--1579},
	title = {Deep backward schemes for high-dimensional nonlinear PDEs},
	volume = {89},
	year = {2020}}

@article{kohler2010review,
	author = {Kohler, Michael},
	journal = {Recent Developments in Applied Probability and Statistics: Dedicated to the Memory of J{\"u}rgen Lehn},
	pages = {37--58},
	publisher = {Springer},
	title = {A review on regression-based Monte Carlo methods for pricing American options},
	year = {2010}}

@article{nadarajah2017comparison,
	author = {Nadarajah, Selvaprabu and Margot, Fran{\c{c}}ois and Secomandi, Nicola},
	journal = {European Journal of Operational Research},
	number = {1},
	pages = {196--204},
	publisher = {Elsevier},
	title = {Comparison of least squares Monte Carlo methods with applications to energy real options},
	volume = {256},
	year = {2017}}

@article{ludkovski2020simulation,
	author = {Ludkovski, Michael and Maheshwari, Aditya},
	journal = {Energy Systems},
	number = {2},
	pages = {377--415},
	publisher = {Springer},
	title = {Simulation methods for stochastic storage problems: A statistical learning perspective},
	volume = {11},
	year = {2020}}

@article{reppen2025neural,
	author = {Reppen, Andres Max and Soner, Halil Mete and Tissot-Daguette, Valentin},
	journal = {Mathematical Finance},
	number = {2},
	pages = {441--469},
	publisher = {Wiley Online Library},
	title = {Neural optimal stopping boundary},
	volume = {35},
	year = {2025}}

\clearpage

\appendix

\section{Additional tables}\label{sec:tables}
This section contains tables for the plots in \cref{fig:combined_results}.

\begin{table}[htbp]
  \centering
  \caption{Performance Metrics of Different Strategies, CL Experiment}
  \label{tab:strategy_metrics_3}
  \begin{tabular}{lccc}
    \toprule
    \textbf{Strategy} & \textbf{Final Value Capture} & \textbf{Decision Quality} & \textbf{Internal Consistency} \\
    \midrule
    $k$-NN            & \textbf{0.9949}              & \textbf{0.9455}           & 0.9275                        \\
    LGBM              & 0.9937                       & 0.9455                    & 0.9539                        \\
    Network 1         & 0.9852                       & 0.9219                    & \textbf{0.9716}               \\
    Linear            & 0.9084                       & 0.7453                    & 0.9601                        \\
    Forest            & 0.9056                       & 0.7549                    & 0.9230                        \\
    Greedy            & 0.1960                       & 0.0790                    & 1.0000                        \\
    \bottomrule
  \end{tabular}
  \caption*{\small Note: Bold values indicate the best performance in each metric category.}
\end{table}

\begin{table}[htbp]
  \centering
  \caption{Performance Metrics of Different Strategies, ACLP Experiment}
  \label{tab:strategy_metrics}
  \begin{tabular}{lccc}
    \toprule
    \textbf{Strategy} & \textbf{Final Value Capture} & \textbf{Decision Quality} & \textbf{Internal Consistency} \\
    \midrule
    PCA $k$-NN        & \textbf{0.9956}              & \textbf{0.6831}           & 0.9814                        \\
    Ridge             & 0.9943                       & 0.6733                    & \textbf{0.9875}               \\
    Linear            & 0.9934                       & 0.6482                    & 0.9858                        \\
    LASSO             & 0.9934                       & 0.6443                    & 0.9802                        \\
    Greedy            & 0.9930                       & 0.6607                    & 1.0000                        \\
    Network 1         & 0.9874                       & 0.5254                    & 0.9042                        \\
    Forest            & 0.9796                       & 0.5603                    & 0.9693                        \\
    LGBM              & 0.9476                       & 0.6332                    & 0.9180                        \\
    \bottomrule
  \end{tabular}
  \caption*{\small Note: Bold values indicate the best performance in each metric category.}
\end{table}

\begin{table}[htbp]
  \centering
  \caption{Performance Metrics of Different Strategies, BSP Experiment}
  \label{tab:strategy_metrics_2}
  \begin{tabular}{lccc}
    \toprule
    \textbf{Strategy} & \textbf{Final Value Capture} & \textbf{Decision Quality} & \textbf{Internal Consistency} \\
    \midrule
    LGBM              & \textbf{0.8796}              & 0.8447                    & \textbf{0.9976}               \\
    $k$-NN            & 0.8721                       & \textbf{0.8494}           & 0.9793                        \\
    Network 1         & 0.8641                       & 0.8047                    & 0.8190                        \\
    Network 2         & 0.8461                       & 0.7844                    & 0.8779                        \\
    Forest            & 0.8308                       & 0.7783                    & 0.7895                        \\
    Linear            & 0.5468                       & 0.4514                    & 0.7977                        \\
    Greedy            & 0.0010                       & 0.0000                    & 1.0000                        \\
    \bottomrule
  \end{tabular}
  \caption*{\small Note: Bold values indicate the best performance in each metric category.}
\end{table}

\begin{table}[htbp]
  \centering
  \caption{Performance Metrics of Different Strategies, HCL Dimension = 50}
  \label{tab:strategy_metrics_dim50}
  \begin{tabular}{lccc}
    \toprule
    \textbf{Strategy} & \textbf{Final Value Capture} & \textbf{Decision Quality} & \textbf{Internal Consistency} \\
    \midrule
    $k$-NN            & \textbf{0.9952}              & \textbf{0.9551}           & 0.7782                        \\
    Network 2         & 0.9828                       & 0.9386                    & 0.7006                        \\
    Linear            & 0.9800                       & 0.9031                    & 0.9668                        \\
    Ridge             & 0.9791                       & 0.9013                    & 0.9797                        \\
    Forest            & 0.9732                       & 0.8953                    & \textbf{0.9815}               \\
    LASSO             & 0.9568                       & 0.8632                    & 0.9745                        \\
    LGBM              & 0.8818                       & 0.8609                    & 0.8114                        \\
    Greedy            & 0.7976                       & 0.4289                    & 1.0000                        \\
    \bottomrule
  \end{tabular}
  \caption*{\small Note: Bold values indicate the best performance in each metric category.}
\end{table}

\clearpage

\section{Implementation Details}\label{app:implementation}
For a link to the code for our experiments, see~\cite{code}. The repository includes core implementations of the simulation environments and scripts to run each experiment in \cref{sec:numerical}. While full reproduction requires some manual configuration, we provide documentation to guide the process and code to generate all figures.

\subsection{Software and Packages}
All models except neural networks use \texttt{MLJ.jl}~\cite{Blaom2020} in Julia~\cite{bezanson2017julia}. Specific packages:
\begin{itemize}
  \item Random Forests: \texttt{DecisionTrees.jl}~\cite{ben_sadeghi_2022_7359268}
  \item LightGBM: \texttt{LightGBM.jl} wrapper around~\cite{lightgbm_github}
  \item k-NN: \texttt{NearestNeighbor.jl}
  \item Neural Networks: \texttt{SimpleChains.jl}~\cite{simplechains2023} through \texttt{Lux.jl}~\cite{pal2023lux} interface (optimized for small CPU models, eliminating GPU transfer overhead for $N\times D$ model training)
\end{itemize}

\subsection{Model Configurations}
\cref{table:models} gives an overview of hyperparameters used in simulations.
\begin{table}[ht]
  \centering
  \caption{Machine Learning Models and Parameters}
  \label{table:models}
  \begin{tabular}{ll}
    \toprule
    Model                 & Parameters                                   \\
    \midrule
    $k$-NN$^*$            & $k=10$                                       \\
    PCA-$k$-NN$^*$        & PCA dimensions = 6, $k=10$                   \\
    Random Forest         & Trees = 25, Max depth = 3                    \\
    LightGBM              & Iterations = 200, Learning rate = 0.05,      \\
                          & Leaves = 31, Min data in leaf = 100,         \\
                          & $\lambda_{L2}$ = 0.5, Bagging fraction = 0.8 \\
    Ridge Regression$^*$  & $\lambda = 0.1$                              \\
    Lasso Regression$^*$  & $\lambda = 0.1$                              \\
    Linear Regression$^*$ & Order = $\begin{cases}
                                         6 & \text{if } d=2   \\
                                         1 & \text{otherwise}
                                       \end{cases}$                \\
    Neural Networks       & See \cref{app:networks}                      \\
    \bottomrule
    \multicolumn{2}{l}{\small $^*$Model is precomposed with standardization of input features}
  \end{tabular}
\end{table}

\subsection{Neural Network Architecture Exploration and Final Configurations}\label{app:networks}
\

\textbf{Exploration process.} We tested the following variations:
\begin{itemize}
  \item \textbf{Activations:} ReLU, Tanh
  \item \textbf{Architecture:} 1-4 hidden layers, widths up to 256 units, including pyramid shapes
  \item \textbf{Regularization:} With/without dropout (rates 0.1-0.3 when used)
  \item \textbf{Training:} Various optimizer parameters, with/without early stopping
  \item \textbf{Initialization:} Including the backward initialization scheme described in \cref{sec:models}
\end{itemize}
The reported architectures achieved the best and most stable performance across these variations.
\textbf{Final architectures.}

We use architectures that vary capacity and depth:
\begin{table}[h]
  \centering
  \caption{Neural Network Architectures}
  \begin{tabular}{lll}
    \toprule
    \textbf{Architecture} & \textbf{Layers}                         & \textbf{Usage}               \\
    \midrule
    Low Shallow           & Input $\to$ Dense(32) $\to$ Output              & Low-dimensional experiments  \\
    Low Deep              & Input $\to$ Dense(32) $\to$ Dense(16) $\to$ Output  & Low-dimensional experiments  \\
    High Shallow          & Input $\to$ Dense(128) $\to$ Output             & High-dimensional experiments \\
    High Deep             & Input $\to$ Dense(128) $\to$ Dense(64) $\to$ Output & High-dimensional experiments \\
    \bottomrule
  \end{tabular}
\end{table}

All architectures use:
\begin{itemize}
  \item ReLU activation for hidden layers
  \item Linear activation for output layer
  \item Dropout rate 0.1 after each hidden layer
  \item ADAM optimizer with exponential learning rate decay
\end{itemize}

\textbf{Architecture selection:} Low-capacity networks for experiments in \cref{sec:numerical:carmona,sec:numerical:stapel}. High-capacity networks for higher-dimensional problems in \cref{sec:numerical:carmona_dim,sec:numerical:aid}.

\subsection{Computational Environment}
All simulations were conducted on a system with the following specifications:
\begin{itemize}
  \item \textbf{Processor:} AMD Ryzen Threadripper 1920X (12 cores, 24 threads, 3.5 GHz max frequency)
  \item \textbf{Memory:} 64 GB RAM
  \item \textbf{Operating System:} Ubuntu 20.04.6 LTS
  \item \textbf{Kernel:} Linux 5.15.0-105-generic
  \item \textbf{Architecture:} x86\_64
\end{itemize}

\section{Proof of \cref{thm:1}} \label{app:1}

We will break down the proof of \cref{thm:1} into several lemmas, each one dealing with a different piece of the per-mode error estimate of $\widehat{g}_j(t_n,x)$ with respect to $g_j(t_n,x)$. The value-level bound $|\widehat{V}_i - \widetilde{V}_i| \leq \max_j|\widehat{g}_j - g_j|$ then follows from the max-Lipschitz property, and a union bound over $j$ introduces a factor of $D$. We start with the growth and Lipschitz regularity of $g_j(t_n,x)$.

\begin{lemma} \label{lemma:gj:lipschitz}
  Consider the function $g_j(t_{n},x)$, as defined in \cref{eq:gj}, and assume the conditions as in \cref{thm:1}.
  Then there exists a constant $C_0(d,L,\lambda_1,\lambda_2)$ such that
  \begin{equation*}
    |g_j(t_{n},x)| + |\nabla_x g_j(t_{n},x)| \leq C_0(\|x\|+1).
  \end{equation*}
\end{lemma}
\begin{proof}
  We can write
  \begin{align*}
    \nabla_x g_j(t_{n},x) & = \int \nabla_x \rho(x,t_n;y,t_{n+1}) \widehat{V}_j(t_{n+1},y) \diff y.
  \end{align*}
  From the growth bounds on $\widehat{V}_j$ together with \cref{eq:transition:bounds} and a change of variables we get that there exist constants $C$ (not necessarily the same at every line) and $\lambda > 0$ such that
  \begin{align*}
    \left | \nabla_x g_j(t_{n},x) \right |
     & \leq
    C \int \|\nabla_x \rho(x,t_n;y,t_{n+1})\| (\|y\|+1) \diff y
    \\
     & \leq
    C \int (\|z+x\|+1) \exp(-\lambda\|z\|^2/\Delta t) \diff z
    \leq
    C (\|x\|+1).
  \end{align*}
  A similar calculation shows that $|g_j(t_{n},x)| \leq C(\|x\|+1)$.
\end{proof}

Using \cref{eq:gjtilde,eq:gj} we define the conditional expected estimator as
\begin{equation} \label{eq:gjbar}
  \widebar{g}_j(t_n,x) = \E \left [\widetilde{g}_j(t_n,x) \Mid \mathcal{X} \right ] = \frac{1}{k} \sum_{s \in \mathcal{N}_{t_n}(x)} g_j(t_n,X_{t_n}^{s}),
\end{equation}
where $\mathcal{N}_{t_n}(x)$ is the set of $k$-nearest neighbors of $x$ out of the set $\mathcal{X}$ at time $t_n$ with respect to the Euclidean distance.

Before we proceed let us state a well known lemma about sub-Gaussian random variables.
\begin{definition} \label{def:sub-gaussian}
  A random variable $X$ is sub-Gaussian with variance proxy $\sigma^2 > 0$ if for all $s \in \R$ we have
  \begin{equation}
    \E[e^{s (X-\E[X])}] \leq e^{\frac{s^2 \sigma^2}{2}}.
  \end{equation}
\end{definition}
\begin{lemma}\label{lem:concentration}
  Let $X_1,\ldots,X_n$ be independent sub-Gaussian random variables with variance proxy $\sigma^2 > 0$. Then for any $\delta > 0$ we have
  \begin{equation*}
    \P \left ( \left | \overline{X}_n - \E[\overline{X}_n] \right | \geq \delta \right ) \leq 2 e^{-\frac{n\delta^2}{2\sigma^2}},
  \end{equation*}
  where $\overline{X}_n = \frac{1}{n} \sum_{i=1}^n X_i$.
\end{lemma}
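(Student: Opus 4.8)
The plan is to use the standard Chernoff (exponential-moment) method. First I would reduce to the centered case: set $Z_i = X_i - \E[X_i]$, which are independent and, by hypothesis and \cref{def:sub-gaussian}, sub-Gaussian with mean zero and the same variance proxy $\sigma^2$. Writing $S_n = \sum_{i=1}^n Z_i = n(\overline{X}_n - \E[\overline{X}_n])$, the claim becomes $\P(|S_n| \geq n\delta) \leq 2 e^{-n\delta^2/(2\sigma^2)}$.

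Next I would observe that $S_n$ is itself sub-Gaussian with variance proxy $n\sigma^2$: by independence, for every $s \in \R$,
\[
  \E[e^{sS_n}] = \prod_{i=1}^n \E[e^{sZ_i}] \leq \prod_{i=1}^n e^{s^2\sigma^2/2} = e^{n s^2\sigma^2/2}.
\]
For the upper tail, Markov's inequality applied to $e^{sS_n}$ with $s > 0$ gives, for any $\delta > 0$,
\[
  \P(S_n \geq n\delta) \leq e^{-s n \delta}\,\E[e^{sS_n}] \leq \exp\!\left(-s n \delta + \frac{n s^2\sigma^2}{2}\right),
\]
and minimizing the exponent over $s > 0$ (the minimizer is $s = \delta/\sigma^2$) yields $\P(S_n \geq n\delta) \leq e^{-n\delta^2/(2\sigma^2)}$.

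Finally, applying the same argument to $-Z_i$, which are also sub-Gaussian with variance proxy $\sigma^2$, bounds $\P(S_n \leq -n\delta)$ by the same quantity, and a union bound over the two events produces the factor $2$. There is no genuine obstacle here: the only point meriting care is that variance proxies add under sums of independent variables, which is immediate from the product formula for moment generating functions above; the remainder is the textbook Chernoff optimization.
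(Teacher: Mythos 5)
Your proof is correct and is exactly the classical Chernoff-bound argument that the paper implicitly invokes when it states \cref{lem:concentration} as a ``well known lemma'' without proof: centering, multiplicativity of the moment generating function under independence, Markov's inequality applied to $e^{sS_n}$, optimization at $s=\delta/\sigma^2$, and a union bound over the two tails. Nothing is missing.
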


The following lemma controls the stochastic error of the one-step forward estimate and does not depend on the specific structure of $\mathcal{X}$.

\begin{lemma} \label{lemma:gjtilde:gjbar}
  Consider the functions $\widetilde{g}_j(t_n,x)$ and $\widebar{g}_j(t_n,x)$ as defined in \cref{eq:gjtilde,eq:gjbar}, and assume the conditions as in \cref{thm:1}.
  Then there exists $\sigma(L, \lambda_1) > 0$ such that for any $\delta > 0$ we have
  \begin{equation*}
    \P \left (\sup_{x} |\widetilde{g}_j(t_n,x) - \widebar{g}_j(t_n,x)| \geq \delta \Mid \mathcal{X} \right ) \leq
    2{\binom{M}{k}} \exp \left (-\frac{k m_Y\delta^2}{2 \sigma^2} \right ).
  \end{equation*}
\end{lemma}
\begin{proof}
  Let $E_1(x) = \widetilde{g}_j(t_n,x) - \widebar{g}_j(t_n,x)$, then we can write
  \begin{equation*}
    E_1(x) = \frac{1}{k} \sum_{s \in \mathcal{N}_{t_n}(x)} \left (\widetilde{g}_j(t_n,x) - g_j(t_n,X_{t_n}^{s}) \right ) =: \frac{1}{k m_Y} \sum_{s \in \mathcal{N}_{t_n}(x)} \sum_{\hat s=1}^{m_Y} \varepsilon_{(s,\hat s)},
  \end{equation*}
  where each term $\varepsilon_{(s,\hat s)}$ is independent and is sub-Gaussian with variance proxy $\sigma^2(\lambda_1, L) > 0$. This follows from the growth assumptions on $\widehat{V}_j(t_{n+1},x)$ together with \cref{eq:transition:bounds}.

  Now define $l_i(x) = \frac{\mathbf{1}_{\mathcal{N}_{t_n}(x)}(i)}{k}$ and define the vectors $\mathbf{l}(x) = (l_1(x),\allowbreak\ldots,\allowbreak l_M(x))$ and $\bm{\varepsilon} = (\frac{1}{m_Y}\sum_{\hat s=1}^{m_Y}\varepsilon_{(1,\hat s)},\allowbreak\ldots,\allowbreak\frac{1}{m_Y}\sum_{\hat s=1}^{m_Y}\varepsilon_{(M,\hat s)})$, then we have
  \begin{equation*}
    \sup_x |E_1(x)| = \sup_x |\mathbf{l}(x) \cdot \bm{\varepsilon}| \leq \sup_{\bm{\theta} \in P} \bm{\theta} \cdot \bm{\varepsilon},
  \end{equation*}
  where $P$ is the convex polytope $\{ \bm{\theta} \in \R^M ; \sum_{i=1}^M |\theta_i| \leq 1 , \max_i |\theta_i| \leq 1/k\}$. We note that at each corner $\bm{\theta}$ of $P$ (the set of which we call $P_V$) we have that $\bm{\theta} \cdot \bm{\varepsilon}$ is sub-Gaussian with variance proxy ($\sigma^2/(m_Y k)$), since by independence of $\varepsilon_{(i,j)}$ we have for any $\lambda \in \R$
  \begin{equation} \label{eq:subgaussian}
    \begin{split}
      \E[e^{\lambda \bm{\theta} \cdot \bm{\varepsilon}}]
       & =
      \prod_{i=1}^M \E[e^{\lambda \theta_i \frac{1}{m_Y} \sum_{j=1}^{m_Y} \varepsilon_{(i,j)}}]
      \leq
      \prod_{i=1}^M \prod_{j=1}^{m_Y} e^{\frac{\lambda^2 \theta_i^2 \sigma^2}{2 m_Y^2}} \leq e^{\frac{\lambda^2 (\sum_{i=1}^M \theta_i^2) \sigma^2}{2 m_Y}}
      \\
       & \leq e^{\frac{\lambda^2 \sigma^2}{2 k m_Y}}.
    \end{split}
  \end{equation}
  Now, a linear function inside a convex polytope achieves its maximum at one of the vertices of the polytope, as such we can use the union bound, \cref{eq:subgaussian} and \cref{lem:concentration} to get
  \begin{equation*}
    \P(\sup_x |E_1(x)| \geq \delta) \leq \sum_{\bm{\theta} \in P_V} \P(\bm{\theta} \cdot \bm{\varepsilon} \geq \delta) \leq
    2{\binom{M}{k}} \exp \left (-\frac{k m_Y \delta^2}{2 \sigma^2} \right ).
  \end{equation*}
\end{proof}
Under some additional assumptions, like assuming that $\log(\varrho(x,t;y,t+\Delta t))$ is concave w.r.t.~$y$ we can obtain an alternative bound. That is, if we consider the function $\sup_{\bm{\theta} \in P} \bm{\theta} \cdot \bm{\varepsilon}$, we first note that it is a Lipschitz function w.r.t.~$\bm{\varepsilon}$ with Lipschitz constant $1/k$, i.e.
\begin{equation*}
  |\sup_{\bm{\theta} \in P} \bm{\theta} \cdot \bm{\varepsilon}_1 - \sup_{\bm{\theta} \in P} \bm{\theta} \cdot \bm{\varepsilon}_2|
  \leq
  \sup_{\theta \in P} \|\bm{\theta}\| \|\bm{\varepsilon}_1 - \bm{\varepsilon}_2\|
\end{equation*}
where $\sup_{\theta \in P} \|\bm{\theta}\| = 1/\sqrt{k}$ and is attained at one of the vertices of $P$.
From this we get (see~\cite{ledoux2006concentration})
\begin{align*}
  \P(\sup |E_1(x)| \geq \delta)
  \leq &
  \P(\sup_{\bm{\theta} \in P} \bm{\theta} \cdot \bm{\varepsilon} \geq \delta)
  =
  \P \left (\sup_{\bm{\theta} \in P} \bm{\theta} \cdot \bm{\varepsilon} - \E[\sup_{\bm{\theta} \in P} \bm{\theta} \cdot \bm{\varepsilon}] \geq \delta - \E[\sup_{\bm{\theta} \in P} \bm{\theta} \cdot \bm{\varepsilon}] \right )
  \\
  \leq &
  \exp \left ( -\frac{(\delta - \E[\sup_{\bm{\theta} \in P} \bm{\theta} \cdot \bm{\varepsilon}])^2 k m_Y}{\sigma^2} \right ).
\end{align*}
In the above, the term $\E[\sup_{\bm{\theta} \in P} \bm{\theta} \cdot \bm{\varepsilon}]$ is a (Gaussian like) complexity measure of the convex polytope $P$ with respect to $\bm{\varepsilon}$.
To estimate the complexity, we use Hölder's inequality and \cref{eq:transition:bounds} to get that there exists a constant $C$ such that
\begin{equation*}
  \E[\sup_{\bm{\theta} \in P} \bm{\theta} \cdot \bm{\varepsilon}] \leq \E[\sup_{\bm{\theta} \in P} \|\theta\| \| \bm{\varepsilon} \|] \leq \frac{1}{\sqrt{k}} \left ( \E[\|\bm{\varepsilon}\|^2] \right )^{1/2} \leq \frac{\sqrt{M}}{\sqrt{k m_Y}} C.
\end{equation*}
Putting this together yields the estimate whenever $\delta > \frac{\sqrt{M}}{2\sqrt{k m_Y}} C$, for some constant $C_1$,
\begin{equation*}
  \P \left (\sup_{x} |\widetilde{g}_j(t_n,x) - \widebar{g}_j(t_n,x)| \geq \delta \Mid \mathcal{X} \right ) \leq \exp \left (-\frac{k m_Y \delta^2}{C_1} \right ).
\end{equation*}

In order to control the error between the average $k$-nearest estimate and the true continuation value we need the following local random error estimate. One effect is that under our assumptions in \cref{thm:1}, the error is controlled entirely by the information in $\mathcal{X}$ at time $t_n$, as such it ``localizes'' the error in time.

\begin{lemma}\label{lemma:lipthitz:bound}
  Let the functions $g_j(t_n,x)$ and $\widebar{g}_j(t_n,x)$ be as defined in \cref{eq:gj,eq:gjbar}, and assume the conditions as in \cref{thm:1}.
  Let $Q \subset \R^d$ be a bounded set, then there exists a constant $C(d, L, Q,\lambda_1,\lambda_2)$ such that for any $x \in Q$ we have
  \begin{align*}
    |\widebar{g}_j(t_n,x) - g_j(t_n,x)|
    \leq
    C \frac{1}{k} \sum_{s \in \mathcal{N}_{t_n}(x)} \left (\|X_{t_n}^{s}\| + 1 \right ) \|X_{t_n}^{s} - x\|
  \end{align*}
\end{lemma}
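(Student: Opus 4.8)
The plan is to reduce the statement to the linear‑growth gradient bound on $\widetilde{V}_i(t_n,\cdot)$ established in \cref{lemma:tildeV:lipschitz}. First I would rewrite the left‑hand side using the definition of $\widebar{\widetilde V}_i$ in \cref{eq:Vbartilde} as an average of increments,
\[
\widebar{\widetilde V}_i(t_n,x) - \widetilde V_i(t_n,x) = \frac{1}{k}\sum_{s \in \mathcal{N}_{t_n}(x)} \bigl(\widetilde V_i(t_n,X_{t_n}^{s}) - \widetilde V_i(t_n,x)\bigr),
\]
and apply the triangle inequality, so that it suffices to bound each term $|\widetilde V_i(t_n,X_{t_n}^{s}) - \widetilde V_i(t_n,x)|$ by $C(\|X_{t_n}^{s}\|+1)\,\|X_{t_n}^{s}-x\|$ uniformly over $x \in Q$.

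Second, for each neighbour index $s$ I would express the increment as a line integral of the gradient along the segment $z_t = x + t(X_{t_n}^{s}-x)$, $t \in [0,1]$, which is legitimate because $\widetilde V_i(t_n,\cdot)$ is $C^1$ under the transition‑density estimates \cref{eq:transition:bounds} (this is implicit in the proof of \cref{lemma:tildeV:lipschitz}). This gives
\[
|\widetilde V_i(t_n,X_{t_n}^{s}) - \widetilde V_i(t_n,x)| \le \|X_{t_n}^{s}-x\|\,\sup_{t \in [0,1]} |\nabla_z \widetilde V_i(t_n,z_t)|.
\]
By \cref{lemma:tildeV:lipschitz} the supremum is at most $C_0 \sup_{t}(\|z_t\|+1)$. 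Since $\|z_t\| \le \max(\|x\|,\|X_{t_n}^{s}\|)$ and $x \in Q$ is bounded, say $\|x\| \le R_Q$, one has $\|z_t\|+1 \le (R_Q+1)(\|X_{t_n}^{s}\|+1)$. Combining these estimates and summing over $s \in \mathcal{N}_{t_n}(x)$ yields the claimed bound with constant $C = C_0(R_Q+1)$, which depends only on $d$, $L$, $Q$, $\lambda_1$, $\lambda_2$ through $C_0$ and $R_Q$.

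There is essentially no serious obstacle. The only point requiring a little care is that the gradient bound in \cref{lemma:tildeV:lipschitz} grows linearly in $\|z\|$, so one must control $\|z_t\|$ along the whole segment rather than just at its endpoints; this is exactly where boundedness of $Q$ enters, and it is what forces the right‑hand side to be written in terms of $\|X_{t_n}^{s}\|$ instead of $\|x\|$. One should also keep in mind that the estimate is purely deterministic given $D_T$ — it localizes the one‑step error to the positions of the $k$ nearest neighbours at time $t_n$ — which is precisely the feature exploited later when combining this lemma with the concentration results.
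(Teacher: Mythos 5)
Your proposal is correct and follows the same route as the paper: decompose the difference as an average of increments $\widetilde V_i(t_n,X_{t_n}^{s})-\widetilde V_i(t_n,x)$, then bound each increment using the linear-growth gradient estimate of \cref{lemma:tildeV:lipschitz}, with boundedness of $Q$ absorbing $\|x\|$ into the constant. The paper states this in two lines; you have simply made explicit the line-integral step and the control of $\|z_t\|$ along the segment that the paper leaves implicit.
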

\begin{proof}
  From \cref{lemma:gj:lipschitz} and the definition of $\widebar{g}_j(t_n,x)$ we have
  \begin{align*}
    |\widebar{g}_j(t_n,x) - g_j(t_n,x)| \leq
    \frac{1}{k} \sum_{s \in \mathcal{N}_{t_n}(x)} |g_j(t_n,X_{t_n}^{s}) - g_j(t_n,x)| \\
    \leq
    C \frac{1}{k} \sum_{s \in \mathcal{N}_{t_n}(x)} \left (\|X_{t_n}^{s}\| + 1 \right ) \|X_{t_n}^{s} - x\|.
  \end{align*}
\end{proof}

Finally, we are ready to estimate the stochastic error between $g_j$ and $\widebar{g}_j$, this depends on the structure of the set $\mathcal{X}$ at time $t_n$, specifically on how likely it is to see a hole at time $t_n$ in $Q$.

\begin{lemma} \label{lemma:gjbar:gj}
  Consider the functions $g_j(t_n,x)$ and $\widebar{g}_j(t_n,x)$ as defined in \cref{eq:gjbar,eq:gj}, and assume the conditions as in \cref{thm:1}.
  Then, there exist constants $C_1(d, L,Q,\lambda_1,\lambda_2)$ and $C_2(d, L,Q,\lambda_1,\lambda_2)$ such that
  \begin{equation*}
    \P(\sup_{x \in Q} |\widebar{g}_j(t_n,x) - g_j(t_n,x)| \geq \delta)
    \leq
    \begin{cases}
      C_1 \frac{|Q|}{\delta^d} \exp \left (-\frac{M \left (\frac{M-k}{M} - \sup_Q p_{\delta/(4 C_2)}^c \right )_+^2}{2} \right )            & \text{if } \delta/C_2 \leq 1,
      \\
      C_1 \frac{|Q|}{\delta^{d/2}} \exp \left (-\frac{M \left (\frac{M-k}{M} - \sup_Q p_{\sqrt{\delta/(4 C_2)}}^c \right )_+^2}{2} \right ) & \text{if } \delta/C_2 > 1,
    \end{cases}
  \end{equation*}
  where $p_{r}^c(x) = \P(X_{t_n}^{1} \not \in B_{r}(x))$.
\end{lemma}
\begin{proof}
  We use \cref{lemma:lipthitz:bound} to get the existence of a constant $C_2$ such that
  \begin{align*}
    \P(\sup_{x \in Q} |\widebar{g}_j(t_n,x) - g_j(t_n,x)| \geq \delta)
    \leq &
    \P\left (\sup_{x \in Q} \frac{1}{k} \sum_{s \in \mathcal{N}_{t_n}(x)} \left (\|X_{t_n}^{s}\| + 1 \right ) \|X_{t_n}^{s} - x\| \geq \frac{\delta}{C}\right )
    \\
    \leq &
    \P\left (\sup_{x \in Q} (U_n(x)^2+U_n(x)) \geq \frac{\delta}{C_2}\right )
  \end{align*}
  where $U_n(x) = \max_{s \in \mathcal{N}_{t_n}(x)} \|X_{t_n}^{s} - x\|$.
  Now, depending on if $\delta/C_2 \leq 1$ the quadratic term is the dominating term or the linear term, i.e.~we have
  \begin{equation} \label{eq:Un:bound}
    \P\left (\sup_{x \in Q} (U_n(x)^2+U_n(x)) \geq \frac{\delta}{C_2}\right )
    \leq
    \begin{cases}
      \P\left (\sup_{x \in Q} U_n(x) \geq \frac{\delta}{2 C_2}\right )        & \text{if } \delta/C_2 \leq 1,
      \\
      \P\left (\sup_{x \in Q} U_n(x) \geq \sqrt{\frac{\delta}{2 C_2}}\right ) & \text{if } \delta/C_2 > 1.
    \end{cases}
  \end{equation}

  Now, by letting for a set $A$, $\nu_M(A) = \frac{1}{M} \sum_{s=1}^M \mathbf{1}_A(X_{t_n}^{s})$ be the empirical measure of the set $A$, and let $\nu(A) = \P(X_{t_n}^{1} \in A)$ be the true measure. Then for any $\hat \delta > 0$, we have for $\mathcal{A}_{\hat \delta}:=\{B^c_{\hat \delta}(x); x \in Q\}$ the following
  \begin{align*}
    \P \big (\sup_{x \in Q} U_n(x) \geq \hat \delta \big )
    =
    \P \left (\sup_{A \in \mathcal{A}_{\hat \delta}} \nu_M(A) \geq \frac{M-k}{M} \right ).
  \end{align*}
  Now, there exists a set $\mathcal{B}_{\hat \delta} = \{B^c_{\hat \delta/2}(x): x \in P_0\}$ for some finite set of points $P_0$ such that for any $A \in \mathcal{A}_{\hat \delta}$ there is a point $x_0 \in P_0$ such that $A \subset B^c_{\hat \delta/2}(x_0)$. Furthermore, the set $P_0$ contains at most $O(|Q|/\hat \delta^d)$ points. We can now write by the monotonicity of the empirical measure
  \begin{align*}
    \P\left (\sup_{A \in \mathcal{A}_{\hat \delta}} \nu_M(A) \geq \frac{M-k}{M} \right )
    \leq
    \P\left (\max_{B \in \mathcal{B}_{\hat \delta}} \nu_M(B) \geq \frac{M-k}{M} \right ).
  \end{align*}
  Finally, from the triangle inequality, the union bound and Hoeffding's inequality we get
  \begin{equation} \label{eq:hole}
    \begin{split}
      \P \left (\max_{B \in \mathcal{B}_{\hat \delta}} (\nu_M(B) - \nu(B)) \geq \frac{M-k}{M}-\max_{B \in \mathcal{B}_{\hat \delta}} \nu(B) \right )
      \\
      \leq
      \sum_{B \in \mathcal{B}_{\hat \delta}} \P \left (\nu_M(B) - \nu(B) \geq \frac{M-k}{M}-\max_{B \in \mathcal{B}_{\hat \delta}} \nu(B) \right )
      \\
      \leq
      C \frac{|Q|}{\hat \delta^d} \exp \left ( \frac{-M \left ( \frac{M-k}{M} - \max_{B \in \mathcal{B}_{\hat \delta}} \nu(B) \right )_+^2}{2} \right ).
    \end{split}
  \end{equation}

  Now, from \cref{eq:Un:bound,eq:hole} we get in the case $\delta/C_2 \leq 1$ that
  \begin{equation*}
    \P(\sup_{x \in Q} |\widebar{g}_j(t_n,x) - g_j(t_n,x)| \geq \delta)
    \leq C \frac{|Q|}{\delta^d} \exp \left (-\frac{M \left (\frac{M-k}{M} - \sup_Q p_{\delta/(4 C_2)}^c \right )_+^2}{2} \right ),
  \end{equation*}
  and, in the case $\delta/C_2 > 1$ we get
  \begin{equation*}
    \P(\sup_{x \in Q} |\widebar{g}_j(t_n,x) - g_j(t_n,x)| \geq \delta)
    \\
    \leq C \frac{|Q|}{\delta^{d/2}} \exp \left (-\frac{M \left (\frac{M-k}{M} - \sup_Q p_{\sqrt{\delta/(4 C_2)}}^c \right )_+^2}{2} \right ).
  \end{equation*}
\end{proof}

\begin{remark}
  We note that the random variable $U_n(x)$ in the proof above, is the $k$-th order statistic of the random variables $\|X_{t_n}^{s} - x\|$, and in the case the transition density is regularly varying, concentration estimates for the $k$-th order statistic can be found in~\cite{pere2023hill}. This happens for instance in the case of jump-diffusion processes, where the random variable $Y_i$ is fat tailed as in the case of the CL example in \cref{sec:numerical:carmona}.
\end{remark}

Finally, we are ready to tackle the truncated continuation value $\widehat{g}_j(t_n,x)$ defined in \cref{eq:truncation}. This is the final piece of the puzzle in the proof of \cref{thm:1}, and it says that the truncation does not introduce a large error. The reason is that $\widetilde{g}_j$ inherits with high probability the growth of $g_j$, which is half the truncation level.
\begin{lemma} \label{lemma:gjhat:gjtilde}
  Consider the functions $\widetilde{g}_j(t_n,x)$ and $\widehat{g}_j(t_n,x)$ as defined in \cref{eq:truncation,eq:gjtilde}, and assume the conditions as in \cref{thm:1}. Let the constant $C_0$ in \cref{eq:truncation} be the same as in \cref{lemma:gj:lipschitz}.
  There exists constants $C_1,\, C_2,\, \sigma > 0$, depending only on $d$, $L$, $Q$, $\lambda_1$, and $\lambda_2$,
  such that	for any fixed bounded set $Q \subset \R^d$ and $\delta > 0$ we have that
  \begin{multline*}
    \P(\sup_{x \in Q}  |\widehat{g}_j(t_n,x) - \widetilde{g}_j(t_n,x)| \geq \delta)
    \leq
    2{\binom{M}{k}} \exp \left (-\frac{k m_Y(C_0+\delta)^2}{2 \sigma^2} \right )
    \\
    +
    \begin{cases}
      C_1 \frac{|Q|}{(C_0+\delta)^d} \exp \left (-\frac{M \left (\frac{M-k}{M} - \sup_Q p_{(C_0+\delta)/(4 C_2)}^c \right )_+^2}{2} \right )            & \text{if } (C_0+\delta)/C_2 \leq 1,
      \\
      C_1 \frac{|Q|}{(C_0+\delta)^{d/2}} \exp \left (-\frac{M \left (\frac{M-k}{M} - \sup_Q p_{\sqrt{(C_0+\delta)/(4 C_2)}}^c \right )_+^2}{2} \right ) & \text{if } (C_0+\delta)/C_2 > 1.
    \end{cases}
  \end{multline*}
\end{lemma}
\begin{proof}
  By the definition of $\widehat{g}_j(t_n,x)$ we have for the constant $C_0$ from \cref{lemma:gj:lipschitz} that
  \begin{align*}
    \P(\sup_{x \in Q}  |\widehat{g}_j(t_n,x) - \widetilde{g}_j(t_n,x)| \geq \delta) = \P(\sup_{x \in Q}  (\widetilde{g}_j(t_n,x) - 2C_0(\|x\|+1)) \geq \delta).
  \end{align*}
  Now by adding and subtracting the terms $\widebar{g}_j(t_n,x)$ and $g_j(t_n,x)$ we get from \cref{lemma:gj:lipschitz} and the union bound that
  \begin{align*}
    \P \bigg (\sup_{x \in Q}
         & \bigg [ (\widetilde{g}_j(t_n,x)  - \widebar{g}_j(t_n,x))
        +(\widebar{g}_j(t_n,x) - g_j(t_n,x))
        +(g_j(t_n,x)  - 2C_0(\|x\|+1)) \bigg ] \geq \delta \bigg )
    \\
    \leq &
    \P \bigg (\sup_{x \in Q}  \bigg [(\widetilde{g}_j(t_n,x)  - \widebar{g}_j(t_n,x)) + (\widebar{g}_j(t_n,x) - g_j(t_n,x))\bigg ] > C_0+\delta \bigg )
    \\
    \leq &
    \P \bigg (\sup_{x \in Q}  (\widetilde{g}_j(t_n,x)  - \widebar{g}_j(t_n,x)) > (C_0+\delta)/2 \bigg )
    \\
         & + \P \bigg (\sup_{x \in Q} (\widebar{g}_j(t_n,x) - g_j(t_n,x)) > (C_0+\delta)/2 \bigg ).
  \end{align*}
  The proof is complete after applying \cref{lemma:gjtilde:gjbar,lemma:gjbar:gj}.
\end{proof}

\begin{proof}[Proof of \cref{thm:1}]
  Since $\widehat{V}_i = \max_j[a_j + \widehat{g}_j]$ and $\widetilde{V}_i = \max_j[a_j + g_j]$ with $a_j = \Delta t\, f_j(t_n,x) - c_{ij}(t_n,x)$, the $1$-Lipschitz property of max gives
  \begin{equation*}
    |\widehat{V}_i(t_n,x) - \widetilde{V}_i(t_n,x)| \leq \max_{j=1,\dots,D} |\widehat{g}_j(t_n,x) - g_j(t_n,x)|.
  \end{equation*}
  It therefore suffices to bound $|\widehat{g}_j - g_j|$ for each fixed $j$. By the triangle inequality,
  \begin{align*}
    |\widehat{g}_j(t_n,x) - g_j(t_n,x)|
    \leq &
    |\widetilde{g}_j(t_n,x) - \widebar{g}_j(t_n,x)|
    + |\widebar{g}_j(t_n,x) - g_j(t_n,x)|
    \\
         & + |\widehat{g}_j(t_n,x) - \widetilde{g}_j(t_n,x)|
    \\
    =:   &
    E_1(x) + E_2(x) + E_3(x).
  \end{align*}
  As such by the union bound
  \begin{align*}
    \P \left (\sup_{x \in Q} |\widehat{g}_j(t_n,x) - g_j(t_n,x)| \geq \delta \right )
    \leq &
    \P \left (\sup_{x \in Q} E_1(x) \geq \delta/3 \right )
    +
    \P \left (\sup_{x \in Q} E_2(x) \geq \delta/3 \right )
    \\
         & +
    \P \left (\sup_{x \in Q} E_3(x) \geq \delta/3 \right ).
  \end{align*}
  By \cref{lemma:gjtilde:gjbar,lemma:gjbar:gj,lemma:gjhat:gjtilde}, each term is bounded for a fixed $j$. Taking a union bound over $j = 1,\dots,D$ introduces a factor of $D$ and completes the proof.
\end{proof}

\section{Proof of \cref{thm:2}} \label{app:2}

The proof of \cref{thm:2} is simpler than that of \cref{thm:1}, we will modify \cref{lemma:gjtilde:gjbar,lemma:gjbar:gj} to account for the fact that $x$ is fixed which drastically improves the concentration of the error.

\begin{lemma} \label{lemma:gjtilde:gjbar:2}
  Consider the functions $\widetilde{g}_j(t_n,x)$ and $\widebar{g}_j(t_n,x)$ as defined in \cref{eq:gjtilde,eq:gjbar}, and assume the conditions as in \cref{thm:1}.
  Then there exists $\sigma(L, \lambda_1) > 0$ such that for any $\delta > 0$ we have
  \begin{equation*}
    \P \left (|\widetilde{g}_j(t_n,x) - \widebar{g}_j(t_n,x)| \geq \delta \Mid \mathcal{X} \right ) \leq 2 \exp \left (-\frac{k m_Y \delta^2}{2 \sigma^2} \right ).
  \end{equation*}
\end{lemma}
\begin{proof}
  As in the proof of \cref{lemma:gjtilde:gjbar} we have
  \begin{equation*}
    \P \left (|\widetilde{g}_j(t_n,x) - \widebar{g}_j(t_n,x)| \geq \delta \Mid \mathcal{X} \right ) =
    \P \left ( |E_1(x)| \geq \delta \Mid \mathcal{X} \right )
  \end{equation*}
  where
  \begin{equation*}
    E_1(x) = \frac{1}{k m_Y} \sum_{s \in \mathcal{N}_{t_n}(x)} \sum_{\hat s=1}^{m_Y} \varepsilon_{(s,\hat s)}.
  \end{equation*}
  Recall that each term $\varepsilon_{(s,\hat s)}$ is independent and is sub-Gaussian with variance proxy $\sigma^2(\lambda_1, L) > 0$. Now by \cref{lem:concentration} and the Markov property we get that
  \begin{equation*}
    \P \left ( |E_1(x)| \geq \delta \Mid \mathcal{X} \right ) \leq 2 \exp \left (-\frac{k m_Y \delta^2}{2 \sigma^2} \right ).
  \end{equation*}
\end{proof}

\begin{lemma} \label{lemma:gjbar:gj:2}
  Consider the functions $\widebar{g}_j(t_n,x)$ and $g_j(t_n,x)$ as defined in \cref{eq:gj,eq:gjbar}, and assume the conditions as in \cref{thm:1}.
  Then, there exist constants $C_1(d, L,|x|,\lambda_1,\lambda_2)$ and $C_2(d, L,|x|,\lambda_1,\lambda_2)$ such that
  \begin{equation*}
    \P(|\widebar{g}_j(t_n,x) - g_j(t_n,x)| \geq \delta)
    \leq
    \begin{cases}
      2 \exp \left (-\frac{M \left (\frac{M-k}{M} - p_{\delta/(4 C_2)}^c \right )_+^2}{2} \right )        & \text{if } \delta/C_2 \leq 1,
      \\
      2 \exp \left (-\frac{M \left (\frac{M-k}{M} - p_{\sqrt{\delta/(4 C_2)}}^c \right )_+^2}{2} \right ) & \text{if } \delta/C_2 > 1.
    \end{cases}
  \end{equation*}
  where $p_{r}^c(x) = \P(X_{t_n}^{1} \not \in B_{r}(x))$.
\end{lemma}
\begin{proof}
  As in the proof of \cref{lemma:gjbar:gj} we need to estimate
  $\P(U_n(x) \geq \hat \delta)$ where $U_n(x) = \max_{s \in \mathcal{N}_{t_n}(x)} \|X_{t_n}^{s} - x\|$. Proceeding similarly but without the covering argument we get from Hoeffding's inequality that
  \begin{equation*}
    \P(U_n(x) \geq \hat \delta) \leq 2 \exp \left (-\frac{M \left (\frac{M-k}{M} - p_{\hat \delta}^c \right )_+^2}{2} \right ).
  \end{equation*}
\end{proof}

Similarly, \cref{lemma:gjhat:gjtilde} can be modified to account for the fixed $x$ using the above lemmas. As in the proof of \cref{thm:1}, the max-Lipschitz bound $|\widehat{V}_i - \widetilde{V}_i| \leq \max_j|\widehat{g}_j - g_j|$ reduces the problem to per-mode estimates, and a union bound over $j = 1,\dots,D$ completes the proof of \cref{thm:2}.

\section{Proof of \cref{thm:3}}
\begin{definition} \label{def:subexponential}
  A random variable $X$ is sub-exponential with parameter $\lambda > 0$ if
  \begin{equation}
    \E[e^{s (X-\E[X])}] \leq e^{\frac{s^2 \lambda^2}{2}},
  \end{equation}
  for all $|s| \leq 1/\lambda$.
\end{definition}
We begin by stating the following classical variant of \cref{lem:concentration} from probability theory, which is a direct consequence of the definition of sub-exponential random variables and the Chernoff bounding method:
\begin{lemma} \label{lem:subexponential}
  Let $X_1,\ldots,X_n$ be independent sub-exponential random variables with parameter $\lambda > 0$, then for any $\delta > 0$ we have for some constant $C > 0$ depending on $\lambda$ that
  \begin{equation*}
    \P \left ( \left \lvert \overline{X}_n-\E[\overline{X}_n] \right \rvert  \geq \delta \right ) \leq
    2\max \left \{ e^{-\frac{n \delta^2}{2 \lambda^2}}, e^{-\frac{n \delta}{2 \lambda}} \right \},
  \end{equation*}
  where $\overline{X}_n = \frac{1}{n} \sum_{i=1}^n X_i$.
\end{lemma}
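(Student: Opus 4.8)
The plan is to run the standard Chernoff exponential-moment argument, but taking care to optimize the free parameter only over the restricted range $|s| \le 1/\lambda$ permitted by \cref{def:subexponential}. First I would center the variables: set $Y_i = X_i - \E[X_i]$, so that the $Y_i$ are i.i.d., mean zero, and satisfy $\E[e^{sY_i}] \le e^{s^2\lambda^2/2}$ for all $|s|\le 1/\lambda$. The event in question then becomes $\{|\overline{Y}_n| \ge \delta\}$ with $\overline{Y}_n = \overline{X}_n - \E[\overline{X}_n]$. I will estimate the upper and lower tails separately and combine them by the union bound, which is what produces the factor $2$.

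For the upper tail, for any $s \in (0,1/\lambda]$ Markov's inequality applied to $e^{s\sum_i Y_i}$, together with independence and the moment bound, gives
\[
\P(\overline{Y}_n \ge \delta) \le e^{-sn\delta}\prod_{i=1}^n \E[e^{sY_i}] \le \exp\left(n\left(-s\delta + \frac{s^2\lambda^2}{2}\right)\right).
\]
The core of the argument is to minimize the exponent $-s\delta + s^2\lambda^2/2$ over the constrained interval $s\in(0,1/\lambda]$. This is an upward parabola in $s$ with unconstrained minimizer $s^{*} = \delta/\lambda^2$, and whether $s^{*}$ is feasible is precisely the dichotomy $\delta\le\lambda$ versus $\delta>\lambda$ that produces the two terms in the stated maximum.

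In the small-deviation regime $\delta\le\lambda$ the minimizer $s^{*}=\delta/\lambda^2$ lies in $(0,1/\lambda]$, and substituting it yields the subgaussian-type exponent $-n\delta^2/(2\lambda^2)$. In the large-deviation regime $\delta>\lambda$ the minimizer falls to the right of the feasible set, so the exponent is decreasing on $(0,1/\lambda]$ and I take the boundary value $s=1/\lambda$, giving exponent $n(1/2 - \delta/\lambda)$; here I would verify the elementary inequality $1/2 - \delta/\lambda \le -\delta/(2\lambda)$, which holds exactly when $\delta\ge\lambda$, to conclude the bound $e^{-n\delta/(2\lambda)}$. In either regime the resulting bound is dominated by $\max\{e^{-n\delta^2/(2\lambda^2)},\, e^{-n\delta/(2\lambda)}\}$.

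Finally, since \cref{def:subexponential} is symmetric in the sign of $s$, the identical estimate applies with $Y_i$ replaced by $-Y_i$ (using that $|-s|\le 1/\lambda$), and hence controls the lower tail $\P(\overline{Y}_n \le -\delta)$ with the same bound; adding the two tails yields the claimed factor $2$ and the stated maximum. I do not expect any genuine obstacle here: the only point requiring care is the case split forced by the bounded domain $|s|\le 1/\lambda$ in the definition — in contrast to the subgaussian setting of \cref{lem:concentration}, where $s$ ranges over all of $\R$ and no split is needed — together with the short verification of the boundary inequality in the large-deviation case.
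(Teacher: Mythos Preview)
Your proposal is correct and is exactly the argument the paper has in mind: the paper does not spell out a proof but simply states that the lemma ``is a direct consequence of the definition of sub-exponential random variables and the Chernoff bounding method,'' which is precisely the exponential Markov inequality plus constrained optimization over $s\in(0,1/\lambda]$ that you carry out. Your case split at $\delta=\lambda$ and the boundary verification $1/2-\delta/\lambda\le -\delta/(2\lambda)$ are the standard details, and the symmetry/union-bound step for the lower tail is correct.
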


Proceeding in a similar way as in the proof of \cref{thm:1}, we first note that \cref{lemma:gj:lipschitz} still holds as the upper bounds in \cref{eq:transition:bounds:subexp} are of the form $K(\|x-y\|)$, and as such the change of variables argument still holds. In the proof of \cref{lemma:gjtilde:gjbar} we note that the random variables $\varepsilon_{(s,\hat s)}$ are now sub-exponential with a parameter depending only on $\lambda_1$ and $L$, and as such we can use \cref{lem:subexponential} instead of \cref{lem:concentration}, which gives us the following bound
\begin{equation*}
  \P\left (\sup_x |\widetilde{g}_j(t_n,x) - \widebar{g}_j(t_n,x)| \geq \delta \Mid \mathcal{X} \right ) \leq 2 {\binom{M}{k}} \max \left \{ e^{-\frac{k m_Y \delta^2}{2 \sigma^2} }, e^{-\frac{k m_Y \delta}{2 \sigma}}\right \}.
\end{equation*}
\cref{lemma:lipthitz:bound,lemma:gjhat:gjtilde} follow in the same way as before, and the max-Lipschitz bound with union bound over $j = 1,\dots,D$ completes the proof of \cref{thm:3}.

\section{Jump diffusion processes and verification of (\ref{eq:transition:bounds:subexp})} \label{app:3}

In this section we will verify \cref{eq:transition:bounds:subexp} for the following $1$d-jump diffusion process
\begin{equation} \label{eq:jumpdiffusion}
  X_t = x + \int_0^t b(X_s) \diff s + \int_0^t \sigma(X_s) \diff W_s + \sum_{i=1}^{\infty} Y_i \mathbf{1}_{\{T_i \leq t\}},
\end{equation}
where $b, \sigma : \R \to \R$ are Lipschitz continuous functions, $W_t$ is a standard Brownian motion, $Y_i$ are i.i.d.~random variables which have mean zero and finite moments of all orders with density $\varphi$. The jump times $T_i$ are the arrival times of a Poisson process $\{N_t\}$ with intensity $\lambda > 0$.

Define the convolution density
\begin{equation*}
  q_t(x,y) = \int_{\R} \varphi(y-z) p_t(x,z) \diff z,
\end{equation*}
where $p_t(x,y)$ is the transition density of the diffusion part of the process, specifically $p_t(x,y)$ is the transition density for
\begin{equation*}
  Z_t = x + \int_0^t b(Z_s) \diff s + \int_0^t \sigma(Z_s) \diff W_s.
\end{equation*}

\begin{proposition}[\cite{kohatsu2022density}]
  For any $t > 0$ and any $x,y \in \R$, the transition density $f_t(x,y)$ of $X_{t}$ given $X_0 = x$, is given by the following convolution
  \begin{align*}
    f_t(x,y) = & p_t(x,y)e^{-\lambda t}
    \\
               & +
    \sum_{n=1}^\infty \int_{0 < t_1 < \ldots < t_n < t < t_{n+1}}
    q_{t_1-t_0} \star \ldots \star q_{t_n-t_{n-1}} \star p_{t-t_n} \lambda^{n+1} e^{-\lambda t_{n+1}} \diff t_1 \ldots \diff t_{n+1},
  \end{align*}
  where
  \begin{equation*}
    (g_1 \star g_2)(x,y) = \int_{\R} g_1(x,z) g_2(z,y) \diff z.
  \end{equation*}
\end{proposition}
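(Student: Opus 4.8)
The plan is to establish the formula by conditioning on the realization of the Poisson clock $\{N_s\}$ together with the jump sizes $(Y_i)$, and then reassembling the transition density of $X_t$ from the diffusion kernel $p_t$ and the jump density $\varphi$. Write $t_0=0$ and recall the two structural facts that drive the argument: between consecutive jump times $X$ evolves exactly as the diffusion $Z$, and at each jump time $T_i$ the state is shifted by the independent increment $Y_i$; moreover, for a Poisson process of intensity $\lambda$ the event $\{N_t=n\}$ coincides with $\{T_n\le t<T_{n+1}\}$, and the joint density of the first $n+1$ arrival times on the simplex $\{0<t_1<\dots<t_{n+1}\}$ equals $\lambda^{n+1}e^{-\lambda t_{n+1}}$.

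First I would fix a jump configuration $0<t_1<\dots<t_n<t$ and compute the conditional transition density of $X_t$ given $X_0=x$, $N_t=n$ and $(T_1,\dots,T_n)=(t_1,\dots,t_n)$. Using the Markov property of $Z$ (Chapman--Kolmogorov) and the independence of the $Y_i$, this density is the iterated integral obtained by propagating from $x$ with $p_{t_1-t_0}$, convolving with $\varphi$ to account for $Y_1$, propagating with $p_{t_2-t_1}$, convolving with $\varphi$ for $Y_2$, and so on, and finishing with $p_{t-t_n}$ to land at $y$. Grouping each ``propagate, then jump'' block identifies precisely the kernel $q_\tau(x,y)=\int_{\R}\varphi(y-z)\,p_\tau(x,z)\,\diff z$, so the conditional density equals $\big(q_{t_1-t_0}\star\cdots\star q_{t_n-t_{n-1}}\star p_{t-t_n}\big)(x,y)$; the case $n=0$ has no jumps and the density is simply $p_t(x,y)$. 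This step is most cleanly carried out by induction on $n$, the inductive step being one Chapman--Kolmogorov step for $Z$ followed by a single convolution with $\varphi$.

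Next I would integrate the conditional density against the law of the jump times. On $\{N_t=n\}$, multiplying the $n$-jump conditional density by the joint arrival-time density $\lambda^{n+1}e^{-\lambda t_{n+1}}$ and integrating over $\{0<t_1<\dots<t_n<t<t_{n+1}\}$ yields exactly the $n$-th summand in the statement; in particular the $n=0$ term gives $e^{-\lambda t}p_t(x,y)$ because $\int_t^{\infty}\lambda e^{-\lambda t_1}\,\diff t_1=e^{-\lambda t}$. Summing over $n\ge0$ then produces the claimed series. Since every term is non-negative, Tonelli's theorem legitimizes both the disintegration and the interchange of the sum with the integral; as a consistency check, the total mass of the series is $\sum_{n\ge0}e^{-\lambda t}(\lambda t)^n/n!=1$, matching $\int f_t(x,y)\,\diff y=1$.

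The main obstacle is the rigorous justification of the two analytic inputs. First, one must know that the diffusion $Z$ actually admits a transition density $p_t(x,y)$: in the one-dimensional setting this follows from standard results for SDEs with Lipschitz coefficients under a non-degeneracy (e.g.\ uniform ellipticity) assumption on $\sigma$, which should be stated as a hypothesis. Second, one must verify that, conditionally on the jump configuration and jump sizes, the law of $X_t$ is genuinely absolutely continuous and factors as the iterated convolution above; this requires conditioning on the $\sigma$-algebra generated by $(N_s)$ and $(Y_i)$ and invoking the independence of $Z$ from the jump mechanism together with its (strong) Markov property, and is where the induction of the second paragraph must be made precise. Once these are in hand, the remaining manipulations are routine. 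An alternative derivation, which avoids explicit conditioning, is a Duhamel/Dyson expansion of the semigroup generated by $\mathcal{L}_Z+\lambda(\Pi-I)$ with $\Pi g(x)=\int g(x+y)\varphi(y)\,\diff y$: expanding around the subprobability semigroup whose kernel is $e^{-\lambda\tau}p_\tau$ and collecting the terms containing exactly $n$ copies of $\lambda\Pi$ reproduces the formula, with the bookkeeping over ordered jump times replacing the probabilistic conditioning.
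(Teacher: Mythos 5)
Your derivation is correct, but note that the paper does not actually prove this proposition: it is imported verbatim from the cited reference \cite{kohatsu2022density}, so there is no in-paper argument to compare against. Your conditioning argument --- disintegrate over $\{N_t=n\}$ and the arrival times, use independence of the jump mechanism from $W$ so that the conditional law of $X_t$ factors as the alternating composition ``diffuse with $p_\tau$, convolve with $\varphi$'', identify each block with $q_\tau$, then integrate against the arrival-time density $\lambda^{n+1}e^{-\lambda t_{n+1}}$ on the simplex --- is the standard route to this formula and is essentially how the cited reference obtains it; your $n=0$ computation and the total-mass check $\sum_n e^{-\lambda t}(\lambda t)^n/n!=1$ are both right, and Tonelli does legitimize all the interchanges since every kernel is non-negative. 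The one substantive point you raise is also the right one: the mere Lipschitz hypothesis on $b,\sigma$ in \cref{eq:jumpdiffusion} does not by itself guarantee that the diffusion part admits a transition density $p_t(x,y)$, so a non-degeneracy assumption on $\sigma$ (or whatever condition the application actually uses --- the subsequent \cref{prop:jumpdiffusion:bounds} in any case \emph{assumes} Gaussian bounds on $p_t$ and $\partial_x p_t$) must be added as a hypothesis for the proposition to be meaningful. With that hypothesis made explicit, your plan closes; the induction on $n$ you defer is a routine Chapman--Kolmogorov step plus one convolution with $\varphi$, exactly as you describe.
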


From the above proposition we note that the derivative of the transition density with respect to $x$ will then be given by
\begin{align*}
  \partial_x f_t(x,y) = & \partial_x p_t(x,y)e^{-\lambda t}
  \\
                        & +
  \sum_{n=1}^\infty \int_{0 < t_1 < \ldots < t_n < t < t_{n+1}}
  \partial_x q_{t_1-t_0} \star \ldots \star q_{t_n-t_{n-1}} \star p_{t-t_n} \lambda^{n+1} e^{-\lambda t_{n+1}} \diff t_1 \ldots \diff t_{n+1},
\end{align*}
where
\begin{equation*}
  \partial_x q_{t_1-t_0} = \int_{\R} \varphi(y-z) \partial_x p_{t_1-t_0}(x,z) \diff z.
\end{equation*}

If we assume the same gradient bounds for $p_t$ as we did earlier in \cref{eq:transition:bounds}, which is a Gaussian upper bound, we can use the same arguments as in the proof of the upper density bound as in~\cite{kohatsu2022density} to get the following result.

\begin{proposition} \label{prop:jumpdiffusion:bounds}
  Assume that $\varphi$ is sub-exponential, then there exist constants $C, c > 0$ such that for any $t > 0$ and any $x,y \in \R$, the derivative of the transition density $f_t(x,y)$ of $X_{t}$ given $X_0 = x$ satisfies
  \begin{align*}
    |\partial_x f_t(x,y)| \leq \frac{C}{\sqrt{t}} e^{- c|x-y| }.
  \end{align*}
\end{proposition}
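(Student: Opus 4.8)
The plan is to run the argument of \cite{kohatsu2022density} for the density upper bound, but carrying a derivative on the first kernel of every convolution and replacing the Gaussian decay of $p_s$ by sub-exponential decay from the outset. In the series for $f_t(x,y)$ displayed above, differentiation in $x$ only touches the left-most factor: the $n=0$ term becomes $\partial_x p_t(x,y)\,e^{-\lambda t}$, and in the $n$-th term ($n\ge1$) the factor $q_{t_1-t_0}$ is replaced by $\partial_x q_{t_1-t_0}$, with $\partial_x q_s(x,z)=\int_{\R}\varphi(z-w)\,\partial_x p_s(x,w)\,\de w$ by dominated convergence, using the Gaussian gradient bound on $p_s$ and the finite moments of $\varphi$.

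First I would isolate two elementary facts. (i) \emph{Gaussian weights are absorbed by exponential weights, uniformly in the time variable}: for any $\mu>0$ there is $c'=c'(\mu)$ with $\exp(-c\lvert u\rvert^2/s)\le e^{c's}\exp(-\mu\lvert u\rvert)$ and, crucially, $\int_{\R}e^{\mu\lvert u\rvert}\,s^{-1/2}\exp(-c\lvert u\rvert^2/s)\,\de u\le C\,e^{c's}$, so that multiplying a Gaussian kernel of width $s$ by an exponential weight and integrating costs only a constant, with no negative power of $s$. (ii) \emph{Convolution stability in weighted norms}: with $\lVert g\rVert_{\mu,\infty}=\sup_{x,z}e^{\mu\lvert x-z\rvert}\lvert g(x,z)\rvert$ and $\lVert g\rVert_{\mu,1}=\sup_z\int_{\R}e^{\mu\lvert z-y\rvert}\lvert g(z,y)\rvert\,\de y$, the triangle inequality $\lvert x-y\rvert\le\lvert x-z\rvert+\lvert z-y\rvert$ gives $\lVert g_1\star g_2\rVert_{\mu,\infty}\le\lVert g_1\rVert_{\mu,\infty}\,\lVert g_2\rVert_{\mu,1}$; iterating, the left-most kernel of a long product is measured in $\lVert\cdot\rVert_{\mu,\infty}$ and every other in $\lVert\cdot\rVert_{\mu,1}$, with the decay rate $\mu$ never degrading.

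Next I would bound each kernel uniformly over the admissible times $0<s\le t$. Sub-exponentiality of $\varphi$ means $\int e^{\mu\lvert w\rvert}\varphi(w)\,\de w<\infty$ and $\sup_w e^{\mu\lvert w\rvert}\varphi(w)<\infty$ for $\mu$ below the tail rate of $\varphi$; combined with the Gaussian bounds on $p_s$ and $\partial_x p_s$ and fact (i), this yields $\lVert q_s\rVert_{\mu,1}+\lVert q_s\rVert_{\mu,\infty}+\lVert\partial_x q_s\rVert_{\mu,\infty}+\lVert p_{t-t_n}\rVert_{\mu,1}\le C(t)$, with $C(t)$ locally bounded, and $\lVert\partial_x p_t\rVert_{\mu,\infty}\le C(t)\,t^{-1/2}$ (the only genuine time singularity). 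Plugging these into the chain from fact (ii), the $n$-th summand of $\partial_x f_t$ is bounded pointwise by $C(t)^{n+1}\,e^{-\mu\lvert x-y\rvert}$ times the Poisson weight $\lambda^{n+1}e^{-\lambda t_{n+1}}$. Integrating out the times, $\int_t^\infty\lambda^{n+1}e^{-\lambda t_{n+1}}\,\de t_{n+1}=\lambda^n e^{-\lambda t}$ and the ordered simplex has volume $t^n/n!$, so the sum over $n\ge1$ is at most $C(t)\,e^{C\lambda t}\,e^{-\mu\lvert x-y\rvert}$; adding the $n=0$ term $\lvert\partial_x p_t(x,y)\rvert e^{-\lambda t}\le C(t)\,t^{-1/2}e^{-\mu\lvert x-y\rvert}$ gives $\lvert\partial_x f_t(x,y)\rvert\le C\,t^{-1/2}e^{-C\lvert x-y\rvert}$, the constant being locally bounded in $t$ — in particular finite for the fixed step $\Delta t$ used in \cref{eq:transition:bounds:subexp}.

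The main obstacle is keeping everything uniform in $n$: one must fix a single exponent $\mu$ — strictly below both the sub-exponential rate of $\varphi$ and any threshold needed for the Gaussian-weight integrals in fact (i) — once and for all, verify that each of the infinitely many convolutions only multiplies the weighted norm by a bounded constant $C(t)$ rather than eroding $\mu$, and check that the factorial $1/n!$ from the time simplex dominates $C(t)^n$ so the series converges. The supporting identity that makes this work is the uniform-in-$s$ bound in fact (i): it is exactly what prevents the negative powers of $s$ in the Gaussian and gradient bounds (and in $p_{t-t_n}$ as $t_n\uparrow t$) from accumulating into a divergent time integral. Establishing (i), (ii) and the kernel bounds carefully, and then tracking the $t$-dependence of $C(t)$, is the substance of the argument; the rest is a direct adaptation of the template in \cite{kohatsu2022density}.
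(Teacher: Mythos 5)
Your proposal is correct and follows essentially the same route as the paper, which simply differentiates the convolution series term by term (the derivative landing only on the left-most kernel) and invokes the upper-bound argument of the cited density representation; your weighted-norm bookkeeping, the absorption of Gaussian decay into exponential decay uniformly in the time variable, and the $t^n/n!$ simplex volume beating $C(t)^n$ are exactly the ingredients that make that argument go through. One small inaccuracy: $\lVert\partial_x q_s\rVert_{\mu,\infty}$ is not uniformly bounded in $s$ but inherits an $s^{-1/2}$ singularity from $\partial_x p_s$ (the convolution with $\varphi$ acts in the forward variable, not the differentiated one), yet this is integrable over the time simplex and so does not affect the conclusion.
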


As such we have that the transition density satisfies the bounds in \cref{eq:transition:bounds:subexp} for any $x,y \in \R$ and $t > 0$.

\end{document}